%% file: main.tex
\documentclass[11pt]{article}
\usepackage[english]{babel}
\usepackage[utf8]{inputenc}
\usepackage{amsmath, amssymb}
\usepackage{amsthm}
\usepackage[margin=1in]{geometry}
\newtheorem*{remark}{Remark}
\usepackage{graphicx}
\usepackage{tikz}
\usepackage{caption}
\usepackage{subcaption}
\usepackage{mathtools}
\usepackage{microtype}
\usepackage{temp}

\definecolor{ga_blue}{HTML}{4C72B0}
\definecolor{ga_orange}{HTML}{DD8452}
\definecolor{ga_green}{HTML}{55A868}
\definecolor{ga_purple}{HTML}{8172B2}
\definecolor{ga_gray}{HTML}{F7F7F7}

\definecolor{authorred}{HTML}{D62728}

\input{notation.tex}

\newtheorem{theorem}{Theorem}[section]
\newtheorem{thm}{Theorem}[section]
\newtheorem{prop}[thm]{Proposition}
\newtheorem{lemma}[thm]{Lemma}

\usepackage{todonotes} 

\title{Adaptive Consistency for Mimetic Finite Differences}

\author{Yoo Jin Cha$^{1}$, Omar Duran$^{2}$, Nicola Castelletto$^{3}$, Victor A. P. Magri$^{3}$, Hamdi A. Tchelepi$^{2}$ \\
        \small $^{1}$Institute for Computational and Mathematical Engineering, Stanford University, USA \\
        \small $^{2}$Department of Energy Science and Engineering, Stanford University, USA \\
        \small $^{3}$Lawrence Livermore National Laboratory, USA
}
\date{}

\begin{document}

\sloppy

\include{introduction}
\include{theory}
\include{application}
\include{conclusion}

\bibliographystyle{apacite}
\bibliography{bib}

\end{document}

%% file: notation.tex
\newcommand{\mat}[1]{\mathbf{#1}}
\newcommand{\vecsym}[1]{\boldsymbol{#1}}

\newcommand{\domain}{\Omega}
\newcommand{\bndry}{\partial}
\newcommand{\spacedim}{d}
\newcommand{\cell}{\mathcal{C}}
\newcommand{\fc}{\mathcal{F}}
\newcommand{\HdivSpace}[1]{H(\text{div}; #1)}
\newcommand{\LtwoSpace}[1]{L^2(#1)}

\newcommand{\press}{p}
\newcommand{\flux}{\mathbf{m}}
\newcommand{\testflux}{\mathbf{v}}
\newcommand{\testpress}{q}
\newcommand{\perm}{\mathbf{K}}
\newcommand{\mob}{\Lambda}
\newcommand{\normal}{\mathbf{n}}
\newcommand{\fcarea}{A_{\fc}}
\newcommand{\diffK}{\vecsym{\mathcal{K}}} 

\newcommand{\pdrop}{\Delta \mathbf{P}}

\newcommand{\fcnormRes}{\mathbf{R}}
\newcommand{\localRes}{\mathbf{r}}
\newcommand{\localnormRes}{\mathbf{R}}

\newcommand{\Mmat}{M}
\newcommand{\Bmat}{B}
\newcommand{\dvec}{\mathbf{d}}

\newcommand{\Cmat}{\mathbf{C}}
\newcommand{\Nmat}{\mathbf{N}}

\newcommand{\tol}{\tau}

\newcommand{\Fset}{\mathbb{F}}
\newcommand{\Tset}{\mathbb{T}}
\newcommand{\cellind}{\eta}

\newcommand{\normtwo}[1]{\Vert #1 \Vert_2}
\newcommand{\normtwosq}[1]{\Vert #1 \Vert_2^2}
\newcommand{\norminf}[1]{\Vert #1 \Vert_{\infty}}

\newcommand{\triple}[2][]{\left\vert\!\left\vert\!\left\vert #2 \right\vert\!\right\vert\!\right\vert_{#1}}
\newcommand{\tripleval}[2][]{\sqrt{#2^T #1 #2}}

\newcommand{\pProj}{\press^{\text{proj}}}
\newcommand{\mProj}{\flux^{\text{proj}}}
\newcommand{\Mtpfa}{\Mmat^{\text{TPFA}}}
\newcommand{\Mmfd}{\Mmat^{\text{MFD}}}
\newcommand{\Madapt}{\Mmat^{\tol}}

\newcommand{\mSol}{\flux_h}
\newcommand{\mMfd}{\mSol^{\text{MFD}}}
\newcommand{\mAdapt}{\mSol^{\tol}}

\newcommand{\lin}{\mathrm{lin}} 
\newcommand{\exact}{\mathrm{exact}} 

\newcommand{\Cindep}{\mathfrak{C}}

%% file: introduction.tex
\maketitle

\begin{abstract}
The mimetic finite difference (MFD) method provides a robust discretization for flow simulation on general polyhedral meshes, but its computational cost can become significant due to dense local operators and reduced global sparsity. While the two-point flux approximation (TPFA) offers a substantially cheaper alternative, its accuracy is generally restricted to $K$-orthogonal grids. To balance these competing considerations, we present an adaptive MFD framework based on a residual-based consistency indicator derived from the discrete constitutive equations. The indicator measures local inconsistency and enables adaptive TPFA/MFD stencil selection through a user-prescribed tolerance $\tau$. Because the adaptation is performed within a mimetic framework, arbitrary TPFA/MFD partitions remain stable and structure preserving. Theoretical analysis establishes uniform coercivity and proves explicit tolerance-controlled convergence of the relative flux error. Numerical experiments on challenging polyhedral reservoir benchmarks demonstrate accuracy comparable to full MFD discretizations while substantially reducing matrix density and computational cost.

\end{abstract}

\noindent\keywords{Reservoir Simulation; Mimetic Finite Differences; $K$-Orthogonality; Adaptive Discretization; Error Bounds; Unstructured Grid}

\section*{Highlights} 
\begin{itemize}
    \item Replaces heuristic geometric metrics with a systematic consistency indicator derived directly from the discrete constitutive equations using projected linear fields.
    \item Establishes an explicit, user-controlled upper bound on the relative flux error, proving rigorous convergence scaled directly by the parameter $\tau$.
    \item Improves global operator sparsity and reduces matrix density compare to a full-MFD discretizations at a computational cost compared with a single residual evaluation.
    \item Validates performance across diverse reservoir benchmarks, demonstrating that geometric complexity exerts a primary influence on stencil selection compared to permeability heterogeneity.
\end{itemize}

\section{Introduction}
Accurate prediction of fluid displacement is a central objective in reservoir simulation, as it directly impacts applications such as CO$_2$ sequestration, enhanced oil recovery, and groundwater remediation. In CO$_2$ sequestration, for example, reliable tracking of how the injected fluid displaces the resident phase is essential, because it controls plume migration, trapping mechanisms, and long-term storage security~\cite{benson2005ccs, Juanes2006}. Numerically, this behavior is governed by the fluxes computed across cell interfaces, which determine inter-cell mass transfer and ultimately the evolution of saturation fronts.\\

\begin{figure}[H]
\centering
\begin{subfigure}[b]{0.28\textwidth}
    \centering
    \includegraphics[width=\linewidth]{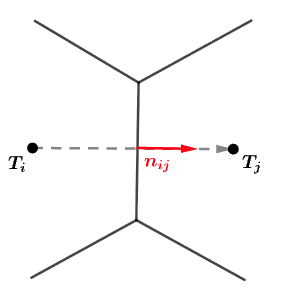}
    \caption{$K$-orthogonal grid}
\end{subfigure}
\hspace{0.1\textwidth}
\begin{subfigure}[b]{0.35\textwidth}
    \centering
    \includegraphics[width=\linewidth]{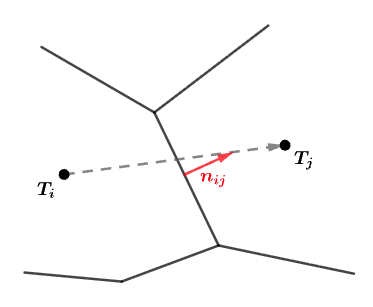}
    \caption{Non-$K$-orthogonal grid}
\end{subfigure}
\caption{
\small 
Illustration of $K$-orthogonality.
(a) For cells $T_i$ and $T_j$, the face normal $\mathbf{n}_{ij}$ is aligned with the center-to-center direction, ensuring consistency of TPFA.
(b) Misalignment leads to inconsistency.
}
\label{fig:korth}
\end{figure}

In practice, these fluxes are approximated using spatial discretization schemes, most commonly two-point or multi-point flux approximations. The two-point flux approximation (TPFA) computes fluxes from pressure differences between neighboring cells. It is simple and widely used, but it is consistent only on $K$-orthogonal grids, where the grid geometry is aligned with the permeability tensor (see Figure~\ref{fig:korth})~\cite{Aziz,Lie2019}. On more general grids, this alignment is lost, and TPFA can introduce significant errors. Multi-point flux approximation (MPFA) methods restore consistency on non-$K$-orthogonal grids by incorporating additional neighboring information~\cite{aavatsmark,Wenjuan}. This improved robustness comes at the cost of larger stencils and more tightly coupled local systems, which can complicate implementation and increase computational cost. Additionally, MPFA is typically implemented as a single-field discretization, where the system is expressed entirely in terms of cell pressures, this is done typically by constructing stencils that enforce local flux continuity without necessarily enforcing the global adjoint relationship between the pressure-gradient and divergence operators.\\

Mimetic finite difference (MFD) methods provide a robust alternative, traditionally formulated as a mixed two-field discretization solving simultaneously for velocity (vector field) and pressure (scalar field). Like MPFA, MFD preserves key mathematical properties of the continuous problem, including local mass conservation and consistency, but it does so while maintaining a symmetric, uniformly coercive global system and preserving the global adjoint relationship between the pressure-gradient and divergence operators~\cite{BeirodaVeiga2009}. Furthermore, MFD was natively devised to support general polyhedral meshes, making it well suited for the highly heterogeneous and geometrically complex grids encountered in reservoir simulation~\cite{daVeiga2014}. Crucially, the underlying algebraic framework of MFD naturally accommodates TPFA-type discretizations through appropriate choices of the local inner-product operator. This flexibility allows different inner products to be deployed in different regions of the domain while retaining a common mimetic structure. However, when the full MFD inner-product operator is employed throughout the entire domain, the resulting system becomes significantly more expensive to assemble and solve due to denser local operators and reduced global sparsity.\\

These trade-offs motivate the use of heterogeneous discretizations across different regions of the domain. In zones that closely satisfy $K$-orthogonality, TPFA remains sufficiently accurate, whereas the full MFD operator is required only in highly distorted regions. Adaptive approaches leveraging this partitioning strategy have been proposed~\cite{Dong2022}; however, they often rely on manual region selection and lack a systematic, quantitative criterion for identifying where the full MFD stencil is strictly necessary. Alternatively, purely geometric mesh-quality metrics, such as distortion or skewness coefficients, have been used to guide stencil selection~\cite{Lowrie2011}. While these metrics characterize grid geometry, their relationship to the actual discretization error is indirect and problem-dependent. Consequently, they provide only qualitative guidance and can fail to pinpoint where physical flow features demand a higher-order stencil. \\

Against this background, we adopt a different perspective: rather than relying on static geometric indicators or manual mapping, we employ a dynamic, local consistency diagnostic driven by the physics of the problem. Our strategy builds upon the classical patch test for consistency~\cite{Militello1991} and modern discretization frameworks~\cite{beiraodaveiga2017divergence, alhinai2017generalized}. Instead of utilizing linear-field verification solely as a static verification check, we embed it directly into the discretization workflow to drive adaptive stencil selection. While this shares a broad motivation with the quadrature-reduction techniques in Multi-Point Flux Mixed Finite Element (MFMFE) methods~\cite{ingram2010multipoint}, where stable mixed operators condense to lower-cost finite-volume stencils, our approach differs fundamentally. Whereas MFMFE reductions are structurally bound to specific cell geometries a priori, our framework utilizes an indicator mechanism governed by a user-defined tolerance $\tau$ to partition the domain. By embedding this indicator loop natively within a face-centered mimetic complex~\cite{bochev2006principles}, the local transition between TPFA and MFD operators leaves the underlying algebraic and topological structure unaltered. Consequently, global uniform coercivity, strict mass conservation, and continuous face-normal fluxes are naturally guaranteed across arbitrary configurations of stencil partitions.

Building on the consistency-based adaptation framework outlined above, our objective is to develop a practical adaptive MFD framework that provides explicit tolerance-controlled bounds on the relative flux error while reducing the computational overhead relative to a full MFD discretization on complex polyhedral meshes. To achieve this, we construct a residual-based consistency indicator derived directly from the discrete constitutive equations. This indicator measures local inconsistency and is compared against a prescribed tolerance to automatically guide TPFA/MFD selection. Importantly, the cost of the adaptation remains comparable to a single global residual evaluation, requiring neither additional linear solves nor expensive local reconstructions. We emphasize that the goal of this work is not to derive sharp a posteriori error estimators, which typically require more sophisticated and computationally expensive dual or goal-oriented techniques. Rather, our focus is on developing a practical adaptation mechanism that provides explicit control of the flux error while maintaining the efficiency and robustness of the underlying mimetic framework. \\

The remainder of this paper is structured as follows. Section 2 outlines the continuous governing equations and establishes the local mixed discrete structural framework. Section 3 details the physical motivation for the proposed adaptation strategy, develops the residual-based consistency indicator through projected linear fields, and introduces the Local Adaptation (LA) and Global Adaptation (GA) indicator formulations. The section further provides a comprehensive convergence analysis proving tolerance-controlled error bounds for both approaches. Section 4 demonstrates the framework's behavior across multiple complex reservoir benchmarks, highlighting its matrix sparsity and error accumulation properties. Section 5 summarizes our findings and suggests avenues for future development. 

\captionsetup{font=small,justification=raggedright,singlelinecheck=false}
\setlength{\emergencystretch}{3em}
\hyphenpenalty=500
\exhyphenpenalty=500

%% file: theory.tex
\section{Problem Statement}

We consider single-phase incompressible flow in a porous medium over a computational domain $\domain \subset \mathbb{R}^\spacedim$ with boundary $\bndry \domain$. Let $\flux \in \HdivSpace{\domain}$ and $\press \in \LtwoSpace{\domain}$ denote, respectively, the Darcy flux and pressure. We write the governing equations in strong form as
\begin{align*}
\diffK^{-1}\flux + \nabla \press &= 0 \quad \text{in } \domain, \\
\nabla\cdot \flux &= 0 \quad \text{in } \domain,
\end{align*}
with suitable boundary conditions on $\bndry \domain$ (e.g., prescribed pressure and/or normal flux). Note that diffusive tensor $\diffK \coloneqq \mob \perm$ incorporates the mass mobility  $\mob$ and the absolute permeability $\perm$.

\paragraph{Global weak form.}
Multiplying by test functions $\testflux \in \HdivSpace{\domain}$ and $\testpress \in \LtwoSpace{\domain}$ and integrating by parts yields the standard mixed variational formulation: find $(\flux,\press)\in \HdivSpace{\domain}\times\LtwoSpace{\domain}$ such that
\begin{equation}
\begin{aligned}
 a(\flux,\testflux) - b(\testflux,\press) + \ell(\testflux) &= 0, \quad &&\forall\, \testflux \in \HdivSpace{\domain},\\
 b(\flux,\testpress) &= 0, \quad &&\forall\, \testpress \in \LtwoSpace{\domain},
\end{aligned}
\label{eq:global_mixed_form}
\end{equation}
where the global bilinear and linear forms are defined by
\begin{align*}
 a(\flux,\testflux) &\coloneqq \int_{\domain} \diffK^{-1}\flux\cdot\testflux\,dV, \\
 b(\testflux,\press) &\coloneqq \int_{\domain} \press\,(\nabla\cdot\testflux)\,dV, \\
 \ell(\testflux) &\coloneqq \int_{\bndry \domain} \press\,(\testflux\cdot\normal)\,dS.
\end{align*}

\paragraph{Localization to cells.}
Because the integrals in \eqref{eq:global_mixed_form} are additive over a mesh partition $\Tset$ of $\domain$, the global forms decompose into sums of cell contributions, e.g., $a=\sum_{\cell\in\Tset} a_{\cell}$ and $b=\sum_{\cell\in\Tset} b_{\cell}$. We therefore work on an individual cell $\cell\subset\domain$ and use the corresponding restricted (cell-wise) forms $a_{\cell}$, $b_{\cell}$, and $\ell_{\cell}$ inherited from the global definitions.

On each individual cell $\cell \subset \domain$, we map the continuous functions to a scalar cell pressure unknown $\press_\cell \in \mathbb{R}$ and a localized vector of face-based flux unknowns $\flux_\cell \in \mathbb{R}^{|(\cell)_\fc|}$, where $|(\cell)_\fc|$ represents the number of faces of cell $\cell$. Approximating the weak operators in discrete space~\cite{BeirodaVeiga2009}, the forms are replaced by matrix counterparts $a_\cell(\flux, \testflux) \approx \flux_\cell^T \Mmat_\cell \testflux_\cell$, $b_\cell(\testflux, \press) \approx \testflux_\cell^T \Bmat_\cell \press_\cell$, and $\ell_\cell(\testflux) \approx \testflux_\cell^T \dvec_\cell$, yielding the local saddle-point system matrix row:
\begin{align*}
\begin{bmatrix}
\Mmat_\cell & -\Bmat_\cell^T \\
\Bmat_\cell & 0
\end{bmatrix}
\begin{bmatrix}
\flux_\cell \\
 \press_\cell
\end{bmatrix}
=
\begin{bmatrix}
-\dvec_\cell \\
0
\end{bmatrix}.
\end{align*}
The specific form of the symmetric positive definite inner product matrix $\Mmat_\cell$ is chosen depending on the active cell discretization, varying between the classic two-point flux approximation $\Mtpfa_\cell$ and the mimetic finite difference operator $\Mmfd_\cell$.

\section{Residual-Based Constitutive Indicators}

To identify where the low-cost $\Mtpfa_\cell$ operator remains valid, this section develops a residual-based constitutive indicator derived from the discrete constitutive equations. We first introduce the indicator and its corresponding adaptive TPFA/MFD discretization, and then establish the theoretical properties of the resulting scheme. In particular, we prove that the adaptive discretization remains stable under arbitrary stencil partition and that the relative flux error is explicitly controlled by the prescribed tolerance $\tau$.

\subsection{Consistency Conditions}
We begin by defining the local matrix invariants required to isolate genuine structural errors from a nominal flow profile. Mimetic discretizations require discrete operators to exactly reproduce linear pressure fields $\press(\mathbf{x}) = \mathbf{a} \cdot \mathbf{x} + b$ inducing uniform flux profiles $\flux = -\diffK \mathbf{a}$. Let $\mathbf{c}_{ij}$ denote the center-to-face vector stretching from the cell centroid to the face centroid, and $\mathbf{n}_{ij}$ the outward unit face normal field for cell $\cell$. Exact normal interface fluxes and localized cell pressure drops project into the discrete space via the structural mapping matrices $\Nmat_{\cell}$ and $\Cmat_{\cell}$:
\begin{align*}
\flux_{\cell} = \Nmat_{\cell} \diffK_{\cell} \mathbf{a}, \quad \Bmat_{\cell} \press_{\cell} - \dvec_{\cell} = \Cmat_{\cell} \mathbf{a}.
\end{align*}
Constitutive algebraic closure under a consistent scheme requires that the face-based inner product matrix satisfies the primary consistency identity:
\begin{align}
\Mmat_{\cell} \Nmat_{\cell} \diffK_{\cell} = \Cmat_{\cell}.
\label{consistency}
\end{align}
Evaluating the identity \eqref{consistency} for the two-point flux approximation yields the localized algebraic consistency defect metric:
\begin{align*}
\delta_{\cell} \coloneqq \normtwo{\Mtpfa_{\cell} \Nmat_{\cell} \diffK_{\cell}  - \Cmat_{\cell}}.
\end{align*}
The matrix norm $\delta_{\cell}$ remains a local algebraic quantity with physical units and ignores the residual fluxes which measure the exactness between the discrete pressure and fluxes; this prevents a direct uniform bound on continuous solution errors. This limitation motivates a dimensionless, residual-based indicator mapped directly through the discrete operators.

\subsection{Flux Projection}
To construct such a residual-based indicator, we introduce a local projection that satisfies interface admissibility across strong material discontinuities. Let an interior interface $\fc = \bndry \cell_L \cap \bndry \cell_R$ separate adjacent blocks $\cell_L$ and $\cell_R$. Assume that each cell is internally homogeneous and isotropic, with a sharp permeability discontinuity across the interface:
\begin{align*}
\diffK(\mathbf{x}) = \begin{cases} \diffK_L = k_L \mat{I}, & \mathbf{x} \in \cell_L \\ \diffK_R = k_R \mat{I}, & \mathbf{x} \in \cell_R \end{cases}
\end{align*}
where $k_L \neq k_R$. Let $d_L = \normtwo{\mathbf{x}_\fc - \mathbf{x}_L}$ and $d_R = \normtwo{\mathbf{x}_\fc - \mathbf{x}_R}$ denote the normal half-cell distances from the cell centroids to the face centroid. The strong form of the problem statement requires pressure continuity and normal mass conservation at the trace interface $\fc$:
\begin{align}
\press_L(\mathbf{x}_\fc) &= \press_R(\mathbf{x}_\fc)  &&\text{(Pressure Continuity)}, \label{eq:p_cont} \\
\flux_L(\mathbf{x}_\fc) \cdot \normal_\fc &= \flux_R(\mathbf{x}_\fc) \cdot \normal_\fc  &&\text{(Normal Mass Conservation)}, \label{eq:flux_cont}
\end{align}
where $\normal_\fc$ is the unit normal pointing outward from $\cell_L$ to $\cell_R$.

We define a reference flux projection by evaluating a TPFA configuration across the interface zone. This baseline deliberately assumes perfect geometric alignment between the cell centroids and the face-normal axis, providing a clean metric to isolate stencil inconsistencies caused by grid non-$\diffK$-orthogonality.

\begin{prop}[Admissibility of the Local Orthogonal Flux Projection]\label{prop:admissibility_orthogonal}
Let the local subdomain continuous pressure fields be governed by local gradients $\mathbf{g}_L$ and $\mathbf{g}_R$:
\begin{align*}
\press_L(\mathbf{x}) = \mathbf{g}_L \cdot (\mathbf{x} - \mathbf{x}_\fc) + \pi_\fc, \quad \press_R(\mathbf{x}) = \mathbf{g}_R \cdot (\mathbf{x} - \mathbf{x}_\fc) + \pi_\fc
\end{align*}
where $\pi_\fc$ is the unique interface pressure. Let $\kappa_L \coloneqq (\diffK_L \normal_\fc) \cdot \normal_\fc$ and $\kappa_R \coloneqq (\diffK_R \normal_\fc) \cdot \normal_\fc$ denote the face-normal directional diffusive components of the adjacent cells. Then, the scalar effective face-normal diffusive projection defined by:
\begin{align*}
\kappa_\fc = \frac{d_L + d_R}{\frac{d_L}{\kappa_L} + \frac{d_R}{\kappa_R}}
\end{align*}
evaluated against a globally uniform nominal gradient field $\mathbf{g} = [a, b, c]^T$ yields a discrete normal interface flux:
\begin{align*}
m_\fc^{\text{proj}} = -\fcarea \kappa_\fc (\mathbf{g} \cdot \normal_\fc)
\end{align*}
that satisfies conditions \eqref{eq:p_cont} and \eqref{eq:flux_cont} identically.
\end{prop}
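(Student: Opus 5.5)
The plan is a direct construction along the face-normal axis. The statement deliberately assumes perfect alignment, so I will place the centroids at $\mathbf{x}_L = \mathbf{x}_\fc - d_L\normal_\fc$ and $\mathbf{x}_R = \mathbf{x}_\fc + d_R\normal_\fc$. I will then restrict attention to the normal components $g_L \coloneqq \mathbf{g}_L\cdot\normal_\fc$ and $g_R \coloneqq \mathbf{g}_R\cdot\normal_\fc$ of the two local gradients. The tangential components can be chosen equal to those of $\mathbf{g}$, which makes the traces of $\press_L$ and $\press_R$ agree along the interface.

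\textbf{Pressure continuity.} This condition is built into the ansatz. Both local fields take the value $\pi_\fc$ at $\mathbf{x}_\fc$, so \eqref{eq:p_cont} holds for any choice of $\mathbf{g}_L$, $\mathbf{g}_R$ and $\pi_\fc$. The only real content is therefore to show that the gradients and $\pi_\fc$ can be fixed compatibly with the nominal field $\mathbf{g}$ and with flux continuity.

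\textbf{Closing the system.} I impose three conditions.
\begin{itemize}
\item The nominal field fixes the centroid pressures: $\press_R(\mathbf{x}_R)-\press_L(\mathbf{x}_L) = (d_L+d_R)\,\mathbf{g}\cdot\normal_\fc$. In terms of the local gradients this reads $d_L g_L + d_R g_R = (d_L+d_R)\,\mathbf{g}\cdot\normal_\fc$.
\item Isotropy gives $\diffK_L\normal_\fc = \kappa_L\normal_\fc$ and $\diffK_R\normal_\fc = \kappa_R\normal_\fc$. Hence \eqref{eq:flux_cont} becomes $-\kappa_L g_L = -\kappa_R g_R \eqqcolon q$, where $q$ denotes the common normal flux density.
\end{itemize}
Substituting $g_L = -q/\kappa_L$ and $g_R = -q/\kappa_R$ into the first relation gives $-q\,(d_L/\kappa_L + d_R/\kappa_R) = (d_L+d_R)\,\mathbf{g}\cdot\normal_\fc$. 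This is the harmonic-averaging identity, and it yields $q = -\kappa_\fc\,(\mathbf{g}\cdot\normal_\fc)$.

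\textbf{Conclusion.} Multiplying by $\fcarea$ gives $m_\fc^{\text{proj}} = \fcarea q$, which is exactly the stated formula. The interface pressure then follows explicitly as $\pi_\fc = \press_L(\mathbf{x}_L) + d_L g_L$, so it is uniquely determined. Since $q$ is the common value of both one-sided normal fluxes, \eqref{eq:flux_cont} holds identically, and \eqref{eq:p_cont} holds by construction.

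\textbf{Main obstacle.} The algebra is elementary; the difficulty is interpretive. The statement does not say precisely how $\mathbf{g}$ constrains $\mathbf{g}_L$ and $\mathbf{g}_R$. My proposal is to interpret it as matching the centroid-to-centroid pressure drop to that of the uniform field. Without this choice the system is underdetermined.

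A further issue concerns the tangential components. A uniform $\mathbf{g}$ is not itself flux-continuous when $k_L \neq k_R$. This is why only the normal projection is claimed to be admissible, and I would state that explicitly. If \eqref{eq:p_cont} is meant along the entire face rather than only at $\mathbf{x}_\fc$, it holds because the tangential components of $\mathbf{g}_L$ and $\mathbf{g}_R$ were chosen equal.
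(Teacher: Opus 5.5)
Your argument is correct and is essentially the paper's proof: the same aligned-centroid ansatz, pressure continuity by construction, and the series-resistance (harmonic) elimination. The only differences are that the paper first solves \eqref{eq:flux_cont} for $\pi_\fc$ and then substitutes $p_R - p_L = (d_L+d_R)(\mathbf{g}\cdot\normal_\fc)$, whereas you solve directly for the common normal flux density $q$, and that you read $\mathbf{g}$ as fixing only the centroid pressure drop, with $\mathbf{g}_L\cdot\normal_\fc$ and $\mathbf{g}_R\cdot\normal_\fc$ allowed to differ. That reading is exactly what the paper's computation uses, and it is more careful than the paper's literal remark $\mathbf{g}_L = \mathbf{g}_R = \mathbf{g}$, which would contradict \eqref{eq:flux_cont} whenever $k_L \neq k_R$ and $\mathbf{g}\cdot\normal_\fc \neq 0$.
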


\begin{proof}
By construction, $\press_L(\mathbf{x}_\fc) = \press_R(\mathbf{x}_\fc) = \pi_\fc$, satisfying condition \eqref{eq:p_cont} identically. Evaluating the local pressure fields at the respective cell centroids yields:
\begin{align*}
p_L \coloneqq \press_L(\mathbf{x}_L) = -d_L (\mathbf{g}_L \cdot \normal_\fc) + \pi_\fc \implies \pi_\fc - p_L = d_L (\mathbf{g}_L \cdot \normal_\fc), \\
p_R \coloneqq \press_R(\mathbf{x}_R) = d_R (\mathbf{g}_R \cdot \normal_\fc) + \pi_\fc \implies p_R - \pi_\fc = d_R (\mathbf{g}_R \cdot \normal_\fc).
\end{align*}
Applying the continuous Darcy's law along the face-normal directional component, the normal velocities at each side of the interface manifest as:
\begin{align*}
(\flux_L \cdot \normal_\fc) = -(\diffK_L \mathbf{g}_L) \cdot \normal_\fc = -\kappa_L (\mathbf{g}_L \cdot \normal_\fc) = -\frac{\kappa_L}{d_L}(\pi_\fc - p_L), \\
(\flux_R \cdot \normal_\fc) = -(\diffK_R \mathbf{g}_R) \cdot \normal_\fc = -\kappa_R (\mathbf{g}_R \cdot \normal_\fc) = -\frac{\kappa_R}{d_R}(p_R - \pi_\fc).
\end{align*}
Imposing the normal mass conservation matching condition \eqref{eq:flux_cont} establishes the interface flux balance:
\begin{align*}
-\frac{\kappa_L}{d_L}(\pi_\fc - p_L) = -\frac{\kappa_R}{d_R}(p_R - \pi_\fc)
\end{align*}
which isolates the unique continuous internal interface trace pressure:
\begin{align*}
\pi_\fc = \left( \frac{\kappa_L}{d_L} + \frac{\kappa_R}{d_R} \right)^{-1} \left( \frac{\kappa_L}{d_L} p_L + \frac{\kappa_R}{d_R} p_R \right).
\end{align*}
Substituting $\pi_\fc$ back into the continuous normal velocity relations defines the macroscopic interface normal flux expression:
\begin{align}
(\flux \cdot \normal_\fc) = -\left( \frac{d_L}{\kappa_L} + \frac{d_R}{\kappa_R} \right)^{-1} (p_R - p_L). \label{eq:interflux_concise}
\end{align}
Since the projection is evaluating using a globally linear field, the local gradients coincide across the interface, i.e., $\mathbf{g}_L = \mathbf{g}_R = \mathbf{g}$. Under this globally linear field, the pressure difference between the two cell centroids satisfies $p_R - p_L = (d_L + d_R)(\mathbf{g} \cdot \normal_\fc)$. Substitution into \eqref{eq:interflux_concise} scales the interface velocity field:
\begin{align*}
(\flux \cdot \normal_\fc) = -\left( \frac{d_L + d_R}{\frac{d_L}{\kappa_L} + \frac{d_R}{\kappa_R}} \right) (\mathbf{g} \cdot \normal_\fc) \equiv -\kappa_\fc (\mathbf{g} \cdot \normal_\fc).
\end{align*}
Integrating this directional velocity profile over the fixed face area $\fcarea$ yields $m_\fc^{\text{proj}} = \int_\fc (\flux \cdot \normal_\fc)\,ds = -\fcarea \kappa_\fc (\mathbf{g} \cdot \normal_\fc)$, completing the proof.
\end{proof}

With this conservative harmonic projection field established, we present the algorithmic implementation details that transform these continuous admissibility definitions into discrete indicator mappings.

\subsection{Residual-Based Indicators and Operator Assembly}
The construction of the adaptation indicators entails four simple steps:\\

\noindent\textbf{Step 1: Projection of Admissible Flow Fields} \\
Prescribe a globally uniform nominal gradient field $\mathbf{g}$, inducing a linear pressure field and the corresponding continuous velocity field. The cell-center pressures and corresponding face mass fluxes are given by:
\begin{align*}
\pProj_{\cell} = \press_{\lin}(\mathbf{x}_{\cell}), \quad m_\fc^{\text{proj}} = -\fcarea \kappa_\fc (\mathbf{g} \cdot \normal_\fc).
\end{align*}

\noindent\textbf{Step 2: Local Subdomain Evaluation and Normalization} \\
Evaluate the projected pairs against the local unassembled TPFA operator equations:
\begin{align*}
\localRes_{\cell} \coloneqq \Mtpfa_{\cell} \mProj_{\cell} - \Bmat_{\cell}\pProj_{\cell} + \dvec_{\cell}.
\end{align*}
Isolating the localized pressure drop profile vector $\pdrop_{\cell} \coloneqq - \Bmat_{\cell} \pProj_{\cell} + \dvec_{\cell}$, the dimensionless element-wise normalized residual array maps to:
\begin{align*}
\localnormRes_{\cell} \coloneqq \frac{\localRes_{\cell}}{\normtwo{\pdrop_{\cell}}}.
\end{align*}

\noindent\textbf{Step 3: Construction of Local and Global Adaptation Indicators} \\
At this stage, two adaptation paradigms are defined according to whether the normalized residual information is evaluated locally of after global assembly across neighboring interfaces. \\

\noindent \textbf{Local Adaptation (LA).}
The local adaptation paradigm evaluates stencil admissibility directly from the localized normalized residual vector obtained in Step 2. The corresponding cell-wise indicator is defined as 
\[
\eta_\mathcal{C}^{\text{LA}} 
\coloneqq \| \mathbf{R}_\mathcal{C} \|_\infty
= 
\max 
|(\mathbf{R}_\mathcal{C})_\fc|.
\]
\noindent \textbf{Global Adaptation (GA).} Alternatively, neighboring residual contributions may first be assembled across shared interfaces. The resulting face-normalized residual field is defined as
\begin{align*}
\fcnormRes_\Fset = \sum_{\cell \in \Tset} (\localnormRes_{\cell})_\fc.
\end{align*}
The corresponding cell-wise indicator is then given by
\begin{align*}
\cellind_{\cell}^{\text{GA}} \coloneqq \max |(\fcnormRes_\Fset)_\cell|.
\end{align*}
For notational simplicity, we denote either adaptation metric by $\eta_\mathcal{C}$ whenever the distinction between Local and Global Adaptation is not required.\\

\noindent\textbf{Step 4: Apply Thresholding Criteria} \\
Given a user-prescribed tolerance $\tol > 0$, we introduce a binary stencil activation flag $\chi_{\cell} \in \{0, 1\}$ to classify each cell domain:
\begin{align}
\chi_{\cell} \coloneqq \begin{cases} 0, & \text{if } \cellind_{\cell} \le \tol \quad \text{(TPFA-compatible)}, \\ 1, & \text{if } \cellind_{\cell} > \tol \quad \text{(MFD-compatible)}. \end{cases} \label{eq:cell_marking}
\end{align}

\noindent\textbf{Adapted Operator Assembly} \\
Once the classification flag $\chi_{\cell}$ is assigned to each cell according to equation~\eqref{eq:cell_marking}, the local cell-wise inner product blocks $\Mmat_{\cell}$ are determined via:
\begin{align*}
\Mmat_{\cell} = (1 - \chi_{\cell})\Mtpfa_{\cell} + \chi_{\cell}\Mmfd_{\cell}.
\end{align*}
These local blocks are collectively assembled to obtain the global adapted inner product matrix block $\Madapt$. The impact of the stencil activation process on the resulting matrix structure is illustrated in Figure~\ref{fig:adaptive_operator_structure}, which compares the sparsity patterns of full TPFA, adaptive MFD, and full MFD discretizations.
\begin{figure}[H]
\centering

\begin{subfigure}[t]{0.3\textwidth}
    \centering
    \includegraphics[width=\textwidth]{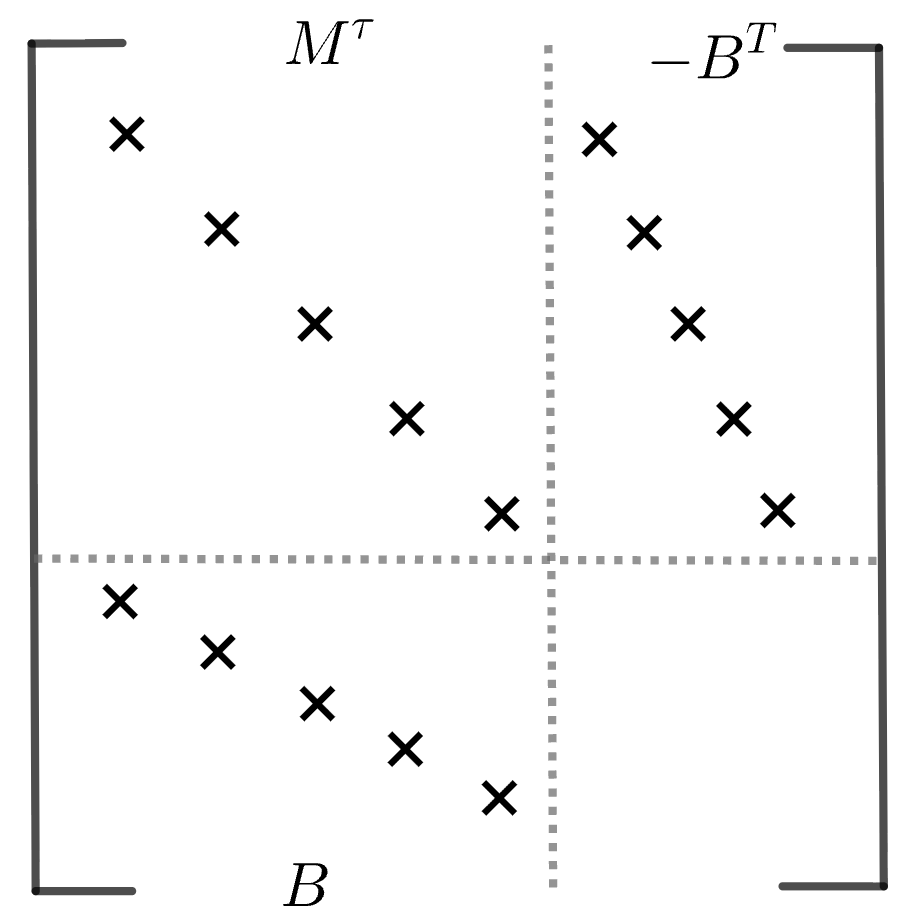}
    \caption{Full TPFA $(\tol \rightarrow \infty)$}
\end{subfigure}
\hfill
\begin{subfigure}[t]{0.3\textwidth}
    \centering
    \includegraphics[width=\textwidth]{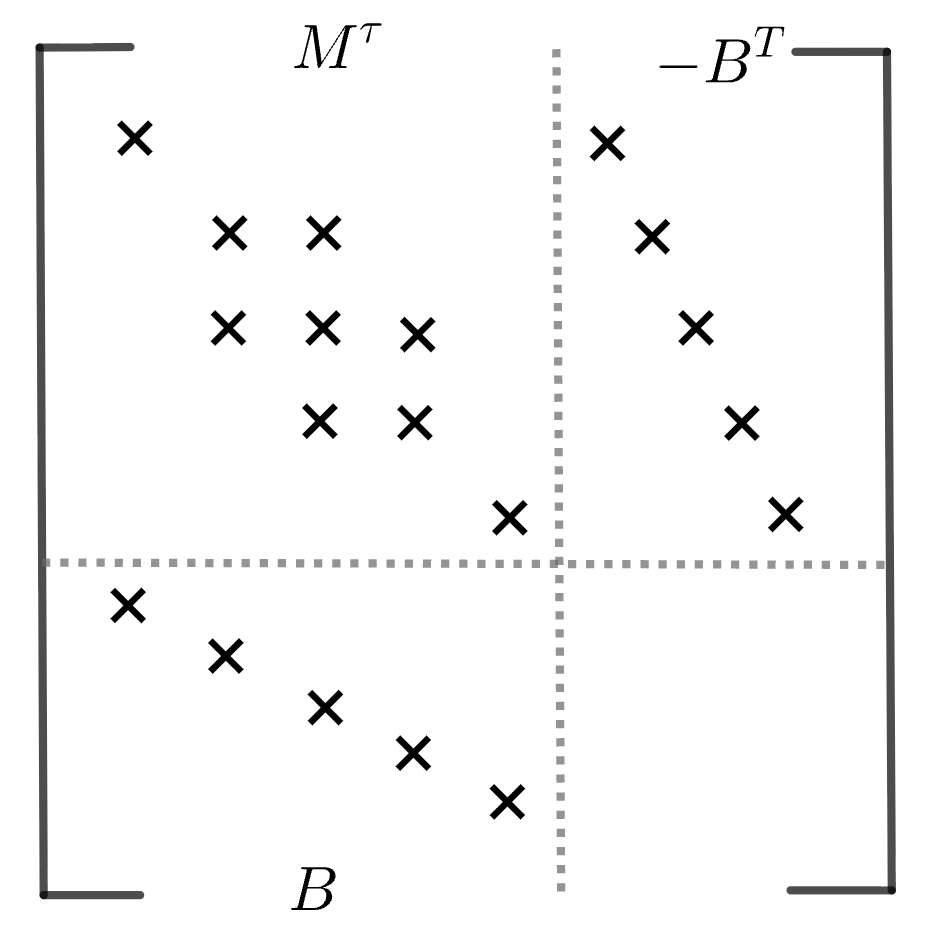}
    \caption{Adaptive MFD}
\end{subfigure}
\hfill
\begin{subfigure}[t]{0.3\textwidth}
    \centering
    \includegraphics[width=\textwidth]{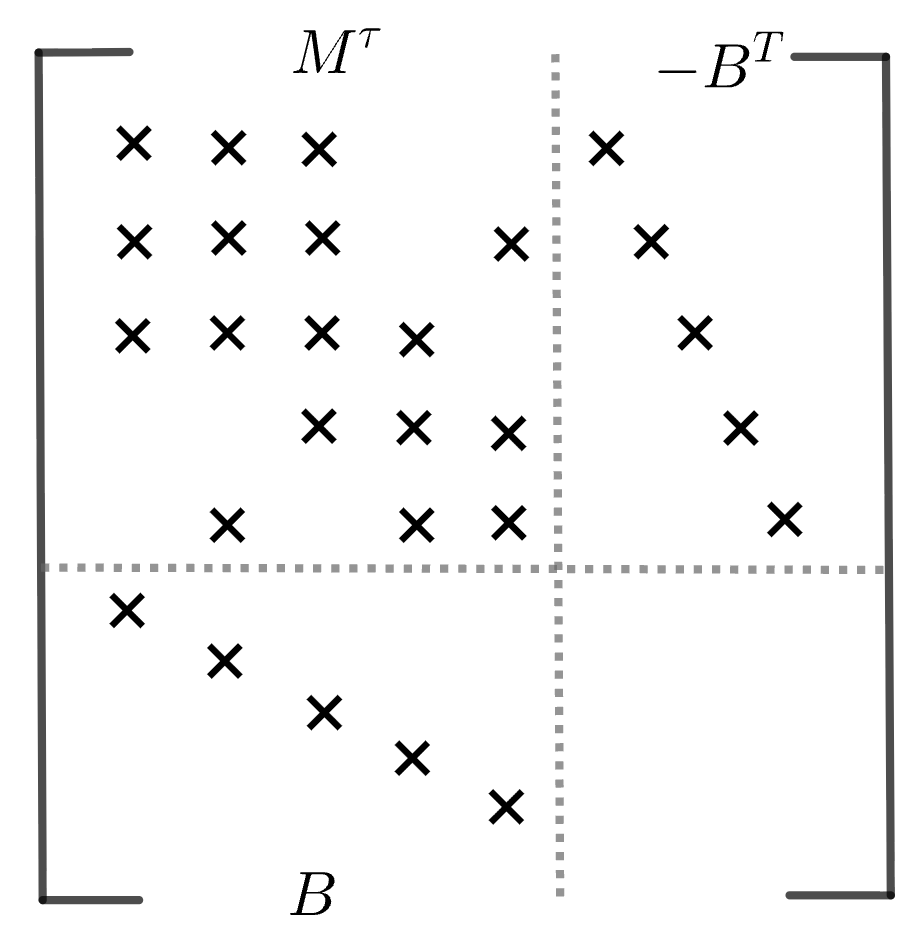}
    \caption{Full MFD $(\tol = 0)$}
\end{subfigure}

\caption{
Sparsity patterns of the global saddle-point operator for full TPFA, adaptive MFD, and full MFD dicretization. As additional cells are classified as MFD-compatible, the density of the global adapted inner product block $M^\tau$ increases while the divergence coupling structure remains unchanged.
}
\label{fig:adaptive_operator_structure}
\end{figure}
Before investigating the accuracy of the adaptive discretization scheme, we must verify that arbitrarily altering local stencils preserves the fundamental mathematical properties of the global matrix.

\begin{lemma}[Uniform Coercivity]\label{lem:coercivity}
Let the cell partition $\Tset$ be restricted to star-shaped, geometrically admissible polyhedral elements where positive two-point transmissibility weights are structurally guaranteed. Let the local cell-wise flux inner product matrices satisfy uniform positive-definiteness bounds independently such that for all localized flux test vectors $\testflux_{\cell} \in \mathbb{R}^{|(\cell)_\fc|}$:

\begin{align*}
\alpha_{\min} \testflux_{\cell}^T \testflux_{\cell} \le \testflux_{\cell}^T \Mmfd_{\cell} \testflux_{\cell} \le \alpha_{\max} \testflux_{\cell}^T \testflux_{\cell}, \quad
\beta_{\min} \testflux_{\cell}^T \testflux_{\cell} \le \testflux_{\cell}^T \Mtpfa_{\cell} \testflux_{\cell} \le \beta_{\max} \testflux_{\cell}^T \testflux_{\cell}
\end{align*}
where $\alpha_{\min}, \beta_{\min} > 0$. Then, for any arbitrary cell partition layout determined by the tolerance parameter $\tol$, the globally assembled hybrid inner product matrix block $\Madapt$ preserves spectral stability:
\begin{align*}
\mu_{\min} \testflux^T \testflux \le \testflux^T \Madapt \testflux \le \mu_{\max} \testflux^T \testflux, \quad \forall \testflux \in \mathbb{R}^{|\Fset|}
\end{align*}
where $\mu_{\min} = \min(\alpha_{\min}, \beta_{\min}) > 0$ and $\mu_{\max} = 2 \max(\alpha_{\max}, \beta_{\max})$ are stable structural constants independent of $\tol$.
\end{lemma}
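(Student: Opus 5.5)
The plan is to write $\Madapt$ through the face-to-cell restriction operators and then bound it cell by cell. For each cell $\cell$, let $\mat{P}_\cell \in \mathbb{R}^{|(\cell)_\fc| \times |\Fset|}$ extract the local face values, so that $\testflux_\cell = \mat{P}_\cell \testflux$ (with orientation signs absorbed into $\mat{P}_\cell$). Assembly then gives
\[
\testflux^T \Madapt \testflux = \sum_{\cell\in\Tset} \testflux_\cell^T \Mmat_\cell \testflux_\cell, \qquad \Mmat_\cell = (1-\chi_\cell)\Mtpfa_\cell + \chi_\cell \Mmfd_\cell .
\]
Because $\chi_\cell \in \{0,1\}$, each local block is exactly one of $\Mtpfa_\cell$ or $\Mmfd_\cell$. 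The hypotheses therefore give, for every cell and independently of $\tol$,
\[
\min(\alpha_{\min},\beta_{\min})\,\testflux_\cell^T\testflux_\cell \le \testflux_\cell^T \Mmat_\cell \testflux_\cell \le \max(\alpha_{\max},\beta_{\max})\,\testflux_\cell^T\testflux_\cell .
\]
This is the only point where the partition enters, and it enters only through a pointwise choice between two uniformly bounded options.

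To turn the local bounds into global ones, I will sum over cells and use a face-counting identity:
\[
\sum_{\cell} \testflux_\cell^T \testflux_\cell = \sum_{\fc\in\Fset} n_\fc\, v_\fc^2,
\]
where $n_\fc\in\{1,2\}$ is the number of cells sharing face $\fc$: one for a boundary face, two for an interior face in a conforming mesh. Using $n_\fc \le 2$ for the upper bound gives $\testflux^T\Madapt\testflux \le 2\max(\alpha_{\max},\beta_{\max})\,\testflux^T\testflux$, which is exactly the factor $2$ in $\mu_{\max}$. Using $n_\fc \ge 1$ for the lower bound gives $\testflux^T\Madapt\testflux \ge \min(\alpha_{\min},\beta_{\min})\,\testflux^T\testflux$. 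In both directions I will need each face to belong to at least one cell, so that every global unknown appears in some local block; this holds because $\Fset$ consists of the faces of $\Tset$.

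The main point requiring care is the lower bound. I must ensure that no global direction is lost in assembly, for instance through cancellation of orientation signs. Since $\mat{P}_\cell$ only permutes entries and flips signs, $v_\fc^2$ is unaffected by the signs, so the face-counting identity holds verbatim and no cancellation can occur. Symmetry of each block carries over to $\Madapt$, so these Rayleigh-quotient bounds are eigenvalue bounds.

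The star-shaped, positive-transmissibility hypothesis is used only to justify $\beta_{\min}>0$. The diagonal TPFA block has entries that are inverses of positive transmissibilities, so its positivity is structural. I will state this explicitly and note that none of the constants depend on $\chi_\cell$, hence none depend on $\tol$.
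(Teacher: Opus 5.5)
Your proposal is correct and follows essentially the same route as the paper's proof. Both arguments decompose $\testflux^T\Madapt\testflux$ cell-wise, apply the local TPFA/MFD bounds through $\min(\alpha_{\min},\beta_{\min})$ and $\max(\alpha_{\max},\beta_{\max})$, and pass to the global norm via the face-multiplicity bounds $\testflux^T\testflux \le \sum_{\cell} \testflux_{\cell}^T\testflux_{\cell} \le 2\,\testflux^T\testflux$; your face-counting identity, with the observation that orientation signs cannot cancel, simply makes precise what the paper states in one line.
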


\begin{proof}
Decompose the global bilinear energy form into its aggregated cell sum components based on the active adaptive partition:
\begin{align*}
\testflux^T \Madapt \testflux = \sum_{\cell \in \Tset^\tol_{\text{TPFA}}} \testflux_{\cell}^T \Mtpfa_{\cell} \testflux_{\cell} + \sum_{\cell \in \Tset^\tol_{\text{MFD}}} \testflux_{\cell}^T \Mmfd_{\cell} \testflux_{\cell}.
\end{align*}
Applying the local element-wise lower coercivity limits to each active subdomain partition yields the initial lower bound:
\begin{align*}
\testflux^T \Madapt \testflux \ge \sum_{\cell \in \Tset^\tol_{\text{TPFA}}} \beta_{\min} \testflux_{\cell}^T \testflux_{\cell} + \sum_{\cell \in \Tset^\tol_{\text{MFD}} } \alpha_{\min} \testflux_{\cell}^T \testflux_{\cell}.
\end{align*}
Factoring out the minimum spectral bound allows us to collapse the localized summation over the entire global grid partition $\Tset$:
\begin{align*}
\testflux^T \Madapt \testflux &\ge \min(\alpha_{\min}, \beta_{\min}) \left( \sum_{\cell \in \Tset^\tol_{\text{TPFA}}} \testflux_{\cell}^T \testflux_{\cell} + \sum_{\cell \in \Tset^\tol_{\text{MFD}}} \testflux_{\cell}^T \testflux_{\cell} \right) \\
&= \min(\alpha_{\min}, \beta_{\min}) \sum_{\cell \in \Tset} \testflux_{\cell}^T \testflux_{\cell}.
\end{align*}
Because every interior face is shared by exactly two adjacent cells, the localized unassembled norm accumulation bounds the global Euclidean face norm from below via $\sum_{\cell \in \Tset} \testflux_{\cell}^T \testflux_{\cell} \ge \testflux^T \testflux$. Substituting this topological property satisfies the lower stability limit:
\begin{align*}
\testflux^T \Madapt \testflux \ge \min(\alpha_{\min}, \beta_{\min}) \testflux^T \testflux \implies \mu_{\min} = \min(\alpha_{\min}, \beta_{\min}) > 0.
\end{align*}
The upper spectral bound follows analogously. Invoking the local upper spectral limits on each unassembled cell component yields:
\begin{align*}
\testflux^T \Madapt \testflux \le \sum_{\cell \in \Tset^\tol_{\text{TPFA}}} \beta_{\max} \testflux_{\cell}^T \testflux_{\cell} + \sum_{\cell \in \Tset^\tol_{\text{MFD}} } \alpha_{\max} \testflux_{\cell}^T \testflux_{\cell}.
\end{align*}
Bounding the aggregated cell sums via the maximum local eigenvalues reduces the expression to:
\begin{align*}
\testflux^T \Madapt \testflux \le \max(\alpha_{\max}, \beta_{\max}) \sum_{\cell \in \Tset} \testflux_{\cell}^T \testflux_{\cell}.
\end{align*}
Invoking the face multiplicity upper bound due to shared interior interfaces yields the inequality $\sum_{\cell \in \Tset} \testflux_{\cell}^T \testflux_{\cell} \le 2 \testflux^T \testflux$. Substituting this upper scaling relationship yields the clean terminal upper bound:
\begin{align*}
\testflux^T \Madapt \testflux \le 2 \max(\alpha_{\max}, \beta_{\max}) \testflux^T \testflux \implies \mu_{\max} = 2 \max(\alpha_{\max}, \beta_{\max}).
\end{align*}
This completes the uniform spectral stability proof.
\end{proof}

\begin{remark}
Lemma~\ref{lem:coercivity} ensures that the globally assembled adaptive system satisfies a uniform discrete inf-sup condition under any adaptation layout. Having proved coercivity and noting that the discrete differential terms remain unchanged regardless of the active adaptation layout, the stability of the global system follows from standard Brezzi theory~\cite{BrezziFortin1991}.
\\
\end{remark}
Having established the stability of the adaptive discretization, we now investigate how the proposed indicators control the resulting relative flux error.

\subsection{Convergence of Local and Global Indicators}

To guarantee that refinement tracking parameters remain uniformly stable across mesh scaling boundaries, we formalize our error analysis within the discrete flux energy norm. For any discrete face-based flux vector $\testflux \in \mathbb{R}^{|\Fset|}$, the global energy norm $\triple[\mat{M}^\tol]{\cdot}$ is defined via the globally assembled, symmetric positive-definite adapted inner product matrix $\Madapt$ as:
\begin{align*}
    \triple[\Madapt]{\testflux} \coloneqq \tripleval[\Madapt]{\testflux} = \left( \sum_{\cell \in \Tset} \testflux_{\cell}^T \Mmat_{\cell} \testflux_{\cell} \right)^{1/2}.
\end{align*}
As an analytical consequence of the uniform coercivity limits established in Lemma~\ref{lem:coercivity}, this flux energy norm is strictly equivalent to the standard Euclidean vector $\ell_2$-norm up to scale-invariant structural constants:
\begin{align*}
    \mu_{\min}^{1/2} \normtwo{\testflux} \le \triple[\Madapt]{\testflux} \le \mu_{\max}^{1/2} \normtwo{\testflux}, \quad \forall \testflux \in \mathbb{R}^{|\Fset|}.
\end{align*}
By formulating the convergence analysis into this flux-centered space, the error indicators track the precise constitutive mismatch of the Darcy velocity field.

\begin{prop}[Conditional Linear Convergence under Local Adaptation]\label{prop:bound_la_conditional}
Let $\flux$ denote the exact linear flux solution vector. Suppose the active adaptive partition contains a locally uniform, face-symmetric subdomain patch where adjacent cell geometries and volumes match identically ($v_1 = v_2 = v$). Under the local cell-wise indicator condition $\cellind_{\cell} = \norminf{\mathbf{R}_\mathcal{C}} \le \tol$, $\forall \cell \in \Tset$, and the uniform stability of the global saddle-point operator:
\begin{align*}
    \mathcal{A}^\tau \coloneqq \begin{bmatrix} \Madapt & -B^T \\ B & 0 \end{bmatrix},
\end{align*}
satisfying $\normtwo{(\mathcal{A}^\tau)^{-1}} \le C_{\text{stab}}$, the relative discretization error in flux satisfies:
\begin{align*}
    \frac{\triple[\Madapt]{\flux - \mAdapt}}{\triple[\Madapt]{\flux}} \le C^\tau_1 \tau,
\end{align*}
where the mesh-dependent scaling constant $C^\tau_1$ and the maximum local subdomain pressure drop $P_{\max}$ are given respectively by:
\begin{align*}
    C^\tau_1 \coloneqq \frac{2 C_{\text{stab}} P_{\max} \mu_{\max}^{1/2} \sqrt{|\Fset^\tau|}}{\triple[\Madapt]{\flux}}, \quad \text{and} \quad P_{\max} \coloneqq \max_{\cell \in \Tset} \normtwo{\pdrop_{\cell}}.
\end{align*}
\end{prop}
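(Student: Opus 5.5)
The plan is a residual-based a priori argument: derive an error equation from the adaptive saddle-point system, bound its residual using the indicator, and then invert using the stability of $\mathcal{A}^\tau$.

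\textbf{Error equation.} Let $(\flux,\press)$ denote the projected exact pair $(\mProj,\pProj)$ built in Step 1. For a linear field this coincides with the exact linear solution; on the symmetric patch the face projection $m_\fc^{\text{proj}}$ is single-valued by Proposition~\ref{prop:admissibility_orthogonal}. Substituting this pair into the adaptive system gives a residual $\mathbf{r}^\tau$.
\begin{itemize}
\item The divergence row is satisfied exactly: the projected fluxes are conservative for a uniform nominal gradient, and $B$ does not depend on $\tol$.
\item The constitutive row assembles as $\sum_{\cell}(\Mmat_\cell \mProj_\cell - \Bmat_\cell\pProj_\cell + \dvec_\cell)$.
\item On MFD cells ($\chi_\cell=1$) each local contribution vanishes by the consistency identity \eqref{consistency}.
\item On TPFA cells ($\chi_\cell=0$) each local contribution equals $\localRes_\cell$ from Step 2.
\end{itemize}
Subtracting the discrete equations solved by $(\mAdapt,p_h^\tau)$ then yields
\begin{align*}
\mathcal{A}^\tau \begin{bmatrix}\flux-\mAdapt\\ \press-p_h^\tau\end{bmatrix} = \begin{bmatrix}\mathbf{r}^\tau\\ 0\end{bmatrix},\qquad \mathbf{r}^\tau=\sum_{\cell\in\Tset^\tau_{\text{TPFA}}}\localRes_\cell .
\end{align*}

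\textbf{Bounding the residual.} The support of $\mathbf{r}^\tau$ is the face set $\Fset^\tau$ adjacent to TPFA cells. Each face receives contributions from at most two cells. Each contribution satisfies $|(\localRes_\cell)_\fc| = \normtwo{\pdrop_\cell}\,|(\localnormRes_\cell)_\fc| \le P_{\max}\,\eta_\cell \le P_{\max}\tol$, using the LA hypothesis $\norminf{\localnormRes_\cell}\le\tol$. Hence $\norminf{\mathbf{r}^\tau}\le 2P_{\max}\tol$, and therefore $\normtwo{\mathbf{r}^\tau}\le 2P_{\max}\tol\sqrt{|\Fset^\tau|}$.

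\textbf{Inversion and norm conversion.} Using $\normtwo{(\mathcal{A}^\tau)^{-1}}\le C_{\text{stab}}$, the flux component of the error satisfies $\normtwo{\flux-\mAdapt}\le C_{\text{stab}}\normtwo{\mathbf{r}^\tau}$. The norm equivalence from Lemma~\ref{lem:coercivity} gives $\triple[\Madapt]{\cdot}\le\mu_{\max}^{1/2}\normtwo{\cdot}$. Combining these and dividing by $\triple[\Madapt]{\flux}$ produces exactly $C_1^\tau\tol$.

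\textbf{Main obstacle.} The delicate step is the error equation, namely that the residual is precisely the sum of TPFA local residuals. This requires three things:
\begin{itemize}
\item the projected face fluxes are single-valued and serve as the common unknown on both sides of each face;
\item the boundary data $\dvec_\cell$ are consistent with the projected pressure;
\item the MFD blocks annihilate the projected pair.
\end{itemize}
The last point holds for homogeneous cells, where $\kappa_\fc$ reduces to $\kappa_L$. Across a material jump, however, the harmonic $\kappa_\fc$ need not match $\Nmat_\cell\diffK_\cell\mathbf{a}$. This is where the symmetric-patch hypothesis $v_1=v_2$ (equal half-distances) is used: it makes the harmonic projection and the cellwise linear flux coincide, or at least keeps their mismatch confined to the indicator-controlled TPFA residual. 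I would first check this identity on a two-cell patch, then extend it facewise.
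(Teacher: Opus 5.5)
Your proposal follows the paper's proof step for step: an error equation for $\mathcal{A}^\tau$ with right-hand side supported on $\Fset^\tau$, the face-wise bound $2P_{\max}\tol$ from at most two unassembled contributions, the $\sqrt{|\Fset^\tau|}$ accumulation, inversion with $C_{\text{stab}}$, and conversion via $\mu_{\max}^{1/2}$. The paper instead writes its residual as $\localRes^\tol=(\Mmfd-\Madapt)\flux$ against the full-MFD pair $(\flux,p^{\text{MFD}})$; this is the negative of your $\mathbf{r}^\tau$ once MFD reproduces $\flux=\mProj$ exactly, so the two error equations coincide.

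The one thing to drop is your appeal to the symmetric patch in the ``main obstacle''. With $d_L=d_R$ the projection gives the harmonic mean of $\kappa_L$ and $\kappa_R$, which matches neither cellwise flux across a jump, so symmetry cannot repair that case. The paper instead takes exactness of the linear field as a hypothesis; for isotropic cells this already forces $\kappa_L=\kappa_R$ on every face with $\mathbf{g}\cdot\normal_\fc\neq 0$. It uses symmetry only to equate $\normtwo{\pdrop_{\cell_1}}$ and $\normtwo{\pdrop_{\cell_2}}$, a step the $P_{\max}$ bound makes redundant.
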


\begin{proof}
Let $\localRes^\tol$ be the global perturbation flux residual vector supported strictly on the active faces $\Fset^\tau$, defined via the constitutive operator mismatch on the linear field: $\localRes^\tol = (\Mmfd - \Madapt)\flux$. For an interior interface entry $\fc$ matching adjacent cells $\cell_1$ and $\cell_2$ within the symmetric patch, the row component expands to $|(\localRes^\tol)_\fc| = |(\localRes_{\cell_1}^\tol)_\fc + (\localRes_{\cell_2}^\tol)_\fc|$. Invoking the triangle inequality yields:
\begin{align*}
    |(\localRes^\tol)_\fc| \le |(\localRes_{\cell_1}^\tol)_\fc| + |(\localRes_{\cell_2}^\tol)_\fc|.
\end{align*}
Because the subdomain patch configuration is uniform and face-symmetric, the local directional velocity components decouple, and the corresponding host cell pressure drops scale symmetrically across the interface, forcing $\normtwo{\pdrop_{\cell_1}} = \normtwo{\pdrop_{\cell_2}}$. Bounding each independent subdomain component via the localized tolerance boundary condition $|(\localRes_{\cell}^\tol)_\fc| \le \tau \normtwo{\pdrop_{\cell}}$ and substituting the global maximum drop invariant $P_{\max}$ simplifies the face inequality to :
\begin{align*}
    |(\localRes^\tol)_\fc| \le \tau \left( \normtwo{\pdrop_{\cell_1}} + \normtwo{\pdrop_{\cell_2}} \right) \le 2 P_{\max} \tau.
\end{align*}
Performing quadratic error accumulation over the full active face manifold $\Fset^\tau$ isolates the Euclidean norm boundary of the operator mismatch:
\begin{align*}
    \normtwo{\localRes^\tol}^2 = \sum_{\fc \in \Fset^\tau} |(\localRes^\tol)_\fc|^2 \le \sum_{\fc \in \Fset^\tau} 4 P_{\max}^2 \tau^2 = 4 P_{\max}^2 |\Fset^\tau| \tau^2 \implies \normtwo{\localRes^\tol} \le 2 P_{\max} \sqrt{|\Fset^\tau|} \tau.
\end{align*}
We express the global error configurations as a coupled saddle-point system with a vanishing right-hand side divergence row:
\begin{align*}
    \mathcal{A}^\tau \begin{bmatrix} \flux - \mAdapt \\ p^{\text{MFD}} - p_h^\tau \end{bmatrix} = \begin{bmatrix} -\localRes^\tol \\ 0 \end{bmatrix}.
\end{align*}
Invoking the bounded inverse stability constant of the saddle-point operator directly controls the primal flux error component via the system norm:
\begin{align*}
    \normtwo{\flux - \mAdapt} \le \left\| \begin{bmatrix} \flux - \mAdapt \\ p^{\text{MFD}} - p_h^\tau \end{bmatrix} \right\|_2 \le \normtwo{(\mathcal{A}^\tau)^{-1}} \normtwo{\localRes^\tol} \le C_{\text{stab}} \normtwo{\localRes^\tol}.
\end{align*}
Incorporating the upper coercivity spectral weight $\mu_{\max}^{1/2}$ from the norm equivalence relation maps the boundary back into the adaptive energy space:
\begin{align*}
    \triple[\Madapt]{\flux - \mAdapt} \le \mu_{\max}^{1/2} \normtwo{\flux - \mAdapt} \le 2 C_{\text{stab}} P_{\max} \mu_{\max}^{1/2} \sqrt{|\Fset^\tau|} \tau.
\end{align*}
Performing normalized division by the reference solution energy norm $\triple[\Madapt]{\flux}$ yields the scale constant $C^\tau_1$ and concludes the proof.
\end{proof}

Although Proposition~\ref{prop:bound_la_conditional} establishes relative error control under symmetric grid configurations, the Local Adaptation (\text{LA}) paradigm breaks down on asymmetric grids due to a decoupled flux information across cell interfaces. Consider an interior interface $\fc = \partial\cell_1 \cap \partial\cell_2$ shared by a perfectly $\diffK$-orthogonal cell $\cell_1$ ($\delta_{\cell_1} = 0$) and a geometrically distorted neighbor $\cell_2$ possessing a significant non-vanishing consistency defect ($\delta_{\cell_2} \gg 0$). Because the \text{LA} framework evaluates marking metrics on unassembled cell subdomains, the localized evaluation for $\cell_1$ yields $\cellind_{\cell_1} \le \tol$, forcing the stencil activation flag to $\chi_{\cell_1} = 0$ and mistakenly retaining the low-cost two-point stencil $\Mtpfa_{\cell_1}$ on this matrix row. However, the geometric misalignment of $\cell_2$ generates an active local half-cell residual $(\localRes_{\cell_2}^\tol)_\fc \neq 0$. Absent an interface trace assembly step, the true global residual entry on the face evaluates to $|(\localRes^\tol)_\fc| = |(\localRes_{\cell_2}^\tol)_\fc| = \mathcal{O}(\|\Delta P_{\cell_2}\|_2)$, leaving the error perturbation completely unconstrained by the tolerance threshold check on $\cell_1$. This unbounded cross-interface error propagation induces a non-uniform loss of accuracy in both the computed pressure and flux fields, highlighting the operational necessity of the face-centered indicator assembly implemented under the Global Adaptation (\text{GA}) paradigm, which we formalize next.

\begin{prop}[Global Adaptation Interfacial Stability]\label{prop:residual_comparison}
Let $\fc = \bndry \cell_1 \cap \bndry \cell_2$ be an interior interface. Let $\fcnormRes_\Fset$ be the global face-normalized indicator assembled from the localized subdomain components. If there exists a uniform positive mesh constant $\gamma > 0$, independent of $\tau$, satisfying the normalized non-cancellation alignment condition:
\begin{align}
    \left| \frac{(\localRes_{\cell_{1}}^{\tau})_\fc}{\|\Delta P_{\cell_{1}}\|_2} + \frac{(\localRes_{\cell_{2}}^{\tau})_\fc}{\|\Delta P_{\cell_{2}}\|_2} \right| \ge \gamma \left( \frac{|(\localRes_{\cell_{1}}^{\tau})_\fc|}{\|\Delta P_{\cell_{1}}\|_2} + \frac{|(\localRes_{\cell_{2}}^{\tau})_\fc|}{\|\Delta P_{\cell_{2}}\|_2} \right), \label{eq:coercivity_condition}
\end{align}
then the individual unassembled local cell residual components are strictly bounded by the global interface indicator via:
\begin{align*}
    |(\localRes_{\cell_{1}}^{\tau})_\fc| \le \frac{\|\Delta P_{\cell_{1}}\|_2}{\gamma} |(\fcnormRes_\Fset)_\fc| \quad \text{and} \quad |(\localRes_{\cell_{2}}^{\tau})_\fc| \le \frac{\|\Delta P_{\cell_{2}}\|_2}{\gamma} |(\fcnormRes_\Fset)_\fc|.
\end{align*}
\end{prop}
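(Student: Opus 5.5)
The plan is to identify the assembled face entry of the global indicator with the left-hand side of the non-cancellation condition \eqref{eq:coercivity_condition}, and then drop one nonnegative term on the right. By the definition in Step 3 of the Global Adaptation paradigm, the face-normalized residual on an interior interface $\fc=\bndry\cell_1\cap\bndry\cell_2$ is the sum of the two unassembled normalized contributions from the cells sharing that face. Writing the local residuals in the $\tau$-superscripted notation used in Proposition~\ref{prop:bound_la_conditional}, this gives
\begin{align*}
(\fcnormRes_\Fset)_\fc = \frac{(\localRes_{\cell_1}^{\tau})_\fc}{\normtwo{\pdrop_{\cell_1}}} + \frac{(\localRes_{\cell_2}^{\tau})_\fc}{\normtwo{\pdrop_{\cell_2}}}.
\end{align*}
The first step is to state this identification explicitly. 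It assumes $\normtwo{\pdrop_{\cell_i}}>0$, which is already implicit in the normalization of Step 2, since $\mathbf{g}\neq 0$ induces a nonzero pressure drop.

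Next, taking absolute values and invoking \eqref{eq:coercivity_condition} yields
\begin{align*}
|(\fcnormRes_\Fset)_\fc| \ge \gamma \left( \frac{|(\localRes_{\cell_1}^{\tau})_\fc|}{\normtwo{\pdrop_{\cell_1}}} + \frac{|(\localRes_{\cell_2}^{\tau})_\fc|}{\normtwo{\pdrop_{\cell_2}}} \right).
\end{align*}
Both summands on the right are nonnegative, so each one is individually bounded by the full sum. Hence $\gamma\,|(\localRes_{\cell_i}^{\tau})_\fc|/\normtwo{\pdrop_{\cell_i}} \le |(\fcnormRes_\Fset)_\fc|$ for $i=1,2$. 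Multiplying through by $\normtwo{\pdrop_{\cell_i}}/\gamma$, which is legitimate because $\gamma>0$, gives the two claimed inequalities.

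The only point needing care is consistency of notation. The statement uses $\localRes^{\tau}_{\cell}$, while the indicator $\fcnormRes_\Fset$ was built in Step 2 from $\localRes_\cell$, the TPFA residual of the projected fields. I would add a sentence fixing that these are the same local face residuals, evaluated with the same projected pair, so that the face sum above is literally the assembled entry. I would also note that $\gamma\le 1$ is automatic from the triangle inequality, so the hypothesis is nonvacuous only as a quantitative non-cancellation assumption.
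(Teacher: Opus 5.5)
Your proof is correct and matches the paper's argument: you identify $(\fcnormRes_\Fset)_\fc$ with the left-hand side of \eqref{eq:coercivity_condition}, drop the other cell's nonnegative term, and multiply by $\normtwo{\pdrop_{\cell_i}}/\gamma$. Your remarks on reconciling $\localRes_\cell$ with $\localRes^{\tau}_\cell$ and on requiring $\normtwo{\pdrop_{\cell_i}}>0$ make explicit an identification that the paper uses without comment.
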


\begin{proof}
Recognizing the left-hand side of inequality \eqref{eq:coercivity_condition} as the absolute definition of the assembled face indicator $|(\fcnormRes_\Fset)_\fc|$, dropping the non-negative second cell term from the right-hand side yields the direct inequality:
\begin{align*}
    |(\fcnormRes_\Fset)_\fc| \ge \gamma \left( \frac{|(\localRes_{\cell_{1}}^{\tau})_\fc|}{\|\Delta P_{\cell_{1}}\|_2} + \frac{|(\localRes_{\cell_{2}}^{\tau})_\fc|}{\|\Delta P_{\cell_{2}}\|_2} \right) \ge \gamma \frac{|(\localRes_{\cell_{1}}^{\tau})_\fc|}{\|\Delta P_{\cell_{1}}\|_2}.
\end{align*}
Multiplying by $\|\Delta P_{\cell_{1}}\|_2$ and dividing by the positive alignment constant $\gamma > 0$ isolates the bound for cell $\cell_1$. The symmetric bound for cell $\cell_2$ follows analogously, validating the statement.
\end{proof}

\begin{theorem}[Relative Error Bound under Global Adaptation]\label{thm:bound}
Let $\flux$ be the exact linear flux solution vector. Let $\mMfd$ and $\mAdapt$ denote the discrete full $\text{MFD}$ and adaptive flux solution vectors, with stable inverse operator bounds within the energy space. Assuming exact reproduction under full $\text{MFD}$ stencils ($\mMfd = \flux$), there exists a constant $\Cindep > 0$ completely independent of both $\tau$ and the mesh refinement size $h$ satisfying:
\begin{align*}
    \frac{\triple[\Madapt]{\flux - \mAdapt}}{\triple[\Madapt]{\flux}} \le \Cindep \, \tau.
\end{align*}
\end{theorem}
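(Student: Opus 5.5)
The argument is an error equation driven by a consistency residual. Because full MFD reproduces linear fields ($\mMfd = \flux$), the difference $(\flux - \mAdapt,\, p^{\text{MFD}} - p_h^\tau)$ satisfies the adaptive saddle-point system with right-hand side $(-\localRes^\tol, 0)$. Here $\localRes^\tol = (\Mmfd - \Madapt)\flux$ is supported only on faces of cells flagged TPFA-compatible ($\chi_\cell = 0$), since $\Madapt$ coincides with $\Mmfd$ on every other cell. The plan is to bound the error in the energy norm by a dual norm of $\localRes^\tol$, and then bound $\localRes^\tol$ face by face with the Global Adaptation indicator via Proposition~\ref{prop:residual_comparison}. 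The essential design choice is to stay in the $\Madapt$-weighted energy space throughout. This avoids the $\sqrt{|\Fset^\tau|}$ factor and the $1/\triple[\Madapt]{\flux}$ normalization that made $C_1^\tau$ mesh-dependent in Proposition~\ref{prop:bound_la_conditional}.

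\textbf{Step 1: energy estimate without the inf-sup constant.} Test the first error row with $e = \flux - \mAdapt$. The second row gives $Be = 0$, so the pressure term drops out and
\[
\triple[\Madapt]{e}^2 = -e^T\localRes^\tol \le \triple[\Madapt]{e}\,\triple[(\Madapt)^{-1}]{\localRes^\tol}.
\]
Hence $\triple[\Madapt]{e} \le \triple[(\Madapt)^{-1}]{\localRes^\tol}$, which is a stability constant equal to one.

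\textbf{Step 2: localize the residual.} Write $\localRes^\tol = \sum_{\cell:\chi_\cell=0} (\Mmfd_\cell - \Mtpfa_\cell)\flux_\cell$. Each local piece is exactly the unassembled TPFA residual $\localRes_\cell$ from Step 2 of the construction, because the MFD consistency identity \eqref{consistency} gives $\Mmfd_\cell \flux_\cell = \Bmat_\cell p_\cell - \dvec_\cell$. This uses the identification of the projected flux with the exact linear flux for a uniform $\diffK$; for a discontinuous $\diffK$ I would invoke Proposition~\ref{prop:admissibility_orthogonal}. On a face $\fc$ shared by $\cell_1,\cell_2$, apply Proposition~\ref{prop:residual_comparison} to get $|(\localRes_{\cell_i})_\fc| \le \gamma^{-1}\normtwo{\pdrop_{\cell_i}}\,|(\fcnormRes_\Fset)_\fc| \le \gamma^{-1}\normtwo{\pdrop_{\cell_i}}\,\tau$, where the last inequality holds because GA marking keeps a cell on TPFA only if all its faces satisfy $|(\fcnormRes_\Fset)_\fc| \le \tau$. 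Squaring and summing then gives, cell by cell,
\[
\normtwo{\localRes_\cell} \le \gamma^{-1}\sqrt{n_\fc}\,\normtwo{\pdrop_\cell}\,\tau,
\]
with $n_\fc$ the maximal number of faces per cell, which is bounded under shape regularity.

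\textbf{Step 3: convert to relative energy norms; this is the main obstacle.} I must show
\[
\sum_\cell \normtwo{\localRes_\cell}^2/\lambda_{\min}(\Mmat_\cell) \lesssim \sum_\cell \normtwo{\pdrop_\cell}^2/\lambda_{\max}(\Mmat_\cell),
\]
and that the right-hand side is at most $C\,\triple[\Madapt]{\flux}^2$. For the second inequality, use consistency again: $\pdrop_\cell = -\Mmfd_\cell\flux_\cell$, so $\normtwo{\pdrop_\cell}^2 \le \alpha_{\max}\,\flux_\cell^T\Mmfd_\cell\flux_\cell$. The local MFD and TPFA energies are comparable cellwise with the constant $\alpha_{\max}\beta_{\max}/(\alpha_{\min}\beta_{\min})$, using the bounds in Lemma~\ref{lem:coercivity}. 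The dangerous point is that $\alpha,\beta$ scale with $h$. I would therefore use the cellwise versions of these bounds, assuming the standard MFD scaling $\lambda(\Mmat_\cell)\sim |\cell|/(\fcarea^2 k)$, so that the ratios $\alpha_{\max}/\alpha_{\min}$ and $\beta_{\max}/\beta_{\min}$ depend only on shape regularity and the contrast of $\diffK$, not on $h$. With this, $\triple[(\Madapt)^{-1}]{\localRes^\tol} \le \sqrt{2}\,\gamma^{-1}\sqrt{n_\fc}\,\kappa^{1/2}\,\tau\,\triple[\Madapt]{\flux}$, where the factor $\sqrt{2}$ accounts for face multiplicity and $\kappa$ is the shape-regular condition ratio. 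This yields $\Cindep = \sqrt{2 n_\fc \kappa}/\gamma$, which is independent of both $\tau$ and $h$.
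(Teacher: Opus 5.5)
Your argument is correct and follows the same outline as the paper's proof. The paper also tests the error equation with $e$ to eliminate the pressure block. Its cellwise Cauchy--Schwarz chain gives $\triple[\Madapt]{e}^2 \le \sum_{\cell\in\Tset^{\tol}_{\text{TPFA}}} (\localRes^{\tol}_{\cell})^T(\Mtpfa_{\cell})^{-1}\localRes^{\tol}_{\cell}$.

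You should use that same localized duality, via $(\localRes^{\tol})^T v=\sum_{\cell}(\localRes^{\tol}_{\cell})^T v_{\cell}$, to pass from your global dual norm to the cellwise sum in Step~3; you never justify that step. Done this way, the factor $\sqrt{2}$ is unnecessary. Going through a global $\lambda_{\min}(\Madapt)$ instead would introduce a dependence on mesh grading. Your face-by-face use of Proposition~\ref{prop:residual_comparison} together with the GA marking rule makes explicit what the paper only asserts.

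The genuine difference is the last step. The paper closes with an asymptotic count, $(\Mtpfa_{\cell})^{-1}=\mathcal{O}(h^{d-2})$, $\normtwosq{\localRes^{\tol}_{\cell}}=\mathcal{O}(h^2\tau^2)$, $|\Tset|=\mathcal{O}(h^{-d})$, and only then divides by $\triple[\Madapt]{\flux}$. This is short and shows clearly why $h$ cancels. But it tacitly assumes quasi-uniform refinement and $\triple[\Madapt]{\flux}$ bounded away from zero. The constants hidden in the $\mathcal{O}$'s carry $|\mathbf{g}|$, $\diffK$ and $|\domain|$, and are never shown to cancel against that denominator.

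You instead use the consistency identity $\pdrop_{\cell}=-\Mmfd_{\cell}\flux_{\cell}$ to turn the indicator's normalization back into local energy of $\flux$. Each TPFA cell then contributes at most a constant times $\tau^2\,\flux_{\cell}^T\Mtpfa_{\cell}\flux_{\cell}$. Summing gives a bound proportional to $\triple[\Madapt]{\flux}$ itself, with no cell counting and no quasi-uniformity. The resulting $\Cindep$ is explicit and depends only on $\gamma$, the number of faces per cell, shape regularity and the within-cell anisotropy of $\diffK$. The cellwise ratio that actually appears is $(\lambda_{\max}(\Mmfd_{\cell})/\lambda_{\min}(\Mtpfa_{\cell}))^2$, not quite the one you wrote, but it is bounded under the same hypothesis.

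The price is the two-sided cellwise scaling $\lambda(\Mmat_{\cell})\sim|\cell|/(\fcarea^2 k)$ for both operators. This is stronger than the global constants of Lemma~\ref{lem:coercivity}, but it is exactly what the paper's $\Mmat_{\cell}=\mathcal{O}(h^{2-d})$ presupposes.

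Both proofs also need the solved linear field to coincide, up to scaling, with the indicator's nominal gradient $\mathbf{g}$, so that $\localRes^{\tol}_{\cell}$ really is the indicator residual. You flag this; the paper leaves it implicit.
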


\begin{proof}
Let $e_{\flux} \coloneqq \mMfd - \mAdapt = \flux - \mAdapt$ denote the discrete flux error vector. Subtracting the active adapted operator rows from the reference full $\text{MFD}$ configuration isolates the exact discrete error layout $M^{\tau} e_{\flux} - B^T (p^{\text{MFD}} - p_h^\tau) = -\localRes^{\tau}$, where $B$ is the global discrete divergence operator, and $\localRes^{\tau} \coloneqq (M^{\text{MFD}} - M^{\tau})\mMfd$ is the unassembled operator mismatch residual vector supported strictly across the active face subset $\Fset^\tau$. Testing this system via an inner product with the flux error vector $e_{\flux}$ eliminates the pressure coupling block identically ($e_{\flux}^T B^T = 0$), since the flux error field is discretely divergence-free ($B e_{\flux} = 0$). Decomposing the remaining operators cell-wise over the active two-point flux approximation grid partition ($\Tset_{\text{TPFA}}^\tau$) reduces the global relation to the energy sum:
\begin{align*}
    \triple[\Madapt]{e_{\flux}}^2 = \sum_{\cell \in \Tset_{\text{TPFA}}^\tau} (e_{\flux, \cell})^T \localRes_{\cell}^\tol,
\end{align*}
where $\localRes_{\cell}^\tol \coloneqq (\Mmat_{\cell}^{\text{MFD}} - \Mmat_{\cell}^{\text{TPFA}})\flux_{\cell}$. To explicitly expose the local dual-space scaling transitions, we substitute the operator-split factorization $(e_{\flux, \cell})^T \localRes_{\cell}^\tol = \left((\Mmat_{\cell}^{\text{TPFA}})^{1/2} e_{\flux, \cell}\right)^T \left((\Mmat_{\cell}^{\text{TPFA}})^{-1/2} \localRes_{\cell}^\tol\right)$. Applying the vector Cauchy-Schwarz inequality to each element component, followed by the discrete Cauchy-Schwarz inequality across the global cell summation index, yields the fully explicit chain of boundaries:
\begin{align*}
    \triple[\Madapt]{e_{\flux}}^2 &= \sum_{\cell \in \Tset_{\text{TPFA}}^\tau} \left( (\Mmat_{\cell}^{\text{TPFA}})^{1/2} e_{\flux, \cell} \right)^T \left( (\Mmat_{\cell}^{\text{TPFA}})^{-1/2} \localRes_{\cell}^\tol \right) \\
    &\le \sum_{\cell \in \Tset_{\text{TPFA}}^\tau} \left\| (\Mmat_{\cell}^{\text{TPFA}})^{1/2} e_{\flux, \cell} \right\|_2 \left\| (\Mmat_{\cell}^{\text{TPFA}})^{-1/2} \localRes_{\cell}^\tol \right\|_2 \\
    &= \sum_{\cell \in \Tset_{\text{TPFA}}^\tau} \left( (e_{\flux, \cell})^T \Mmat_{\cell}^{\text{TPFA}} e_{\flux, \cell} \right)^{1/2} \left( (\localRes_{\cell}^\tol)^T (\Mmat_{\cell}^{\text{TPFA}})^{-1} \localRes_{\cell}^\tol \right)^{1/2} \\
    &\le \left( \sum_{\cell \in \Tset_{\text{TPFA}}^\tau} (e_{\flux, \cell})^T \Mmat_{\cell}^{\text{TPFA}} e_{\flux, \cell} \right)^{1/2} \left( \sum_{\cell \in \Tset_{\text{TPFA}}^\tau} (\localRes_{\cell}^\tol)^T (\Mmat_{\cell}^{\text{TPFA}})^{-1} \localRes_{\cell}^\tol \right)^{1/2}.
\end{align*}
Recognizing that the partial sum satisfies $\sum_{\cell \in \Tset_{\text{TPFA}}^\tau} (e_{\flux, \cell})^T \Mmat_{\cell}^{\text{TPFA}} e_{\flux, \cell} \le \triple[\Madapt]{e_{\flux}}^2$ due to the positive definiteness of the unadapted cell blocks, direct division isolates the localized dual-space stability relationship:
\begin{align}
    \triple[\Madapt]{\flux - \mAdapt} \le \left( \sum_{\cell \in \Tset_{\text{TPFA}}^\tau} (\localRes_{\cell}^\tol)^T (\Mmat_{\cell}^{\text{TPFA}})^{-1} \localRes_{\cell}^\tol \right)^{1/2} \label{eq:ga_dual_bound}.
\end{align}
To evaluate the asymptotic behavior under uniform refinement ($h \to 0$) in general $d$-dimensional space, we observe that face-integrated mass fluxes scale as $\mathcal{O}(h^{d-1})$, requiring the local operator to scale as $\Mmat_{\cell} = \mathcal{O}(h^{2-d})$ to preserve continuous $L^2(\Omega)$ energy dimensions. This dictates an inverse dual operator scaling of $(\Mmat_{\cell}^{\text{TPFA}})^{-1} = \mathcal{O}(h^{d-2})$. Concurrently, the global adaptation criteria matched with Proposition~\ref{prop:residual_comparison} guarantees that localized residuals scale strictly with local cell pressure drops, yielding a squared norm scaling of $\normtwosq{\localRes_{\cell}^\tol} = \mathcal{O}(h^2 \tau^2)$. Combining these dual properties bounds the local inverse energy contribution of a single element by $\mathcal{O}(h^d \tau^2)$, scaling directly with cell volume. Because uniform refinement increases the global cell count as $|\Tset| = \mathcal{O}(h^{-d})$, accumulating these components across the full active partition cancels out the grid size parameters identically:
\begin{align*}
    \sum_{\cell \in \Tset_{\text{TPFA}}^\tau} (\localRes_{\cell}^\tol)^T (\Mmat_{\cell}^{\text{TPFA}})^{-1} \localRes_{\cell}^\tol \le \mathcal{O}(h^{-d}) \cdot \mathcal{O}(h^d \tau^2) \equiv \Cindep^2 \tau^2 \label{eq:asymptotic_cancellation}.
\end{align*}
Substituting this scale-invariant upper bound back into \eqref{eq:ga_dual_bound} and performing normalized division by $\triple[\Madapt]{\flux}$ concludes the linear tracking proof.
\end{proof}
Having developed the adaptive framework and established its theoretical properties, we now examine how the proposed indicator behaves in practice across a range of challenging reservoir simulation benchmarks.

%% file: application.tex
\section{Numerical Experiments}

The numerical experiments are designed to evaluate both the accuracy and the computational efficiency of the proposed adaptive MFD framework. We begin with a convergence study to examine the behavior of the residual-based indicators under simultaneous mesh refinement and tolerance variation and to verify the convergence properties predicted by the theoretical analysis. The remaining experiments focus on the performance of the indicator-based adaptive MFD scheme on a collection of increasingly complex mesh configurations. 

For each benchmark, we investigate the effect of the tolerance $\tau$ on the resulting TPFA-MFD classification and the relative flux error. We additionally quantify the reduction in stencil complexity through the sparsity of the adaptive inner-product matrix $\Madapt$. Throughout this section, the sparsity reduction is defined as
\[
\frac{\mathrm{nnz}(M^{\text{MFD}})-\mathrm{nnz}(\Madapt)}
{\mathrm{nnz}(M^{\text{MFD}})}
\times 100~(\%),
\]
where $\mathrm{nnz}(\cdot)$ denotes the number of nonzero matrix entries. Given the robustness of the Global Adaptation (GA) strategy, all these subsequent benchmarks results are presented using the GA framework.

Additionally, we measure the relative pressure error in the discrete $L^2$-norm and the relative flux error in the energy norm induced by $\Madapt$ such that
\begin{equation*}
    e_p=\frac{\|p_h-p_{\text{ref}}\|_2}{\|p_{\text{ref}}\|_2}, \qquad
    e_m=\frac{\triple[\Madapt]{\flux - \flux_{\text{ref}}}}{\triple[\Madapt]{\flux_{\text{ref}}} }.
\end{equation*}

where the context of the reference solutions $(p_\text{ref},\flux_\text{ref})$ corresponds to the full MFD solution, with the exception of the next subsection.

\subsection{Convergence Behavior of the Adaptive MFD Framework}
We begin with a controlled convergence study in which both the Local Adaptation (LA) and Global Adaptation (GA) strategies are applied to the same benchmark problem. The resulting error trends provide a direct comparison of the two indicator constructions across a range of mesh resolutions and tolerance values.

To provide a controlled verification setting, we construct an extruded mesh containing both TPFA-consistent and TPFA-inconsistent regions. The mesh is obtained by extruding a structured Cartesian grid on the unit square $\Omega = [0,1] \times [0,1]$. A localized cross-shaped deformation is introduced near the center of the domain by applying a vertical and horizontal shear ribbons over $x,y \in [0.4, 0.6]$ with vertex perturbations of the form $\delta y \propto \sin(\pi y)$ and $\delta x \propto \sin(\pi x)$. The resulting mesh contains a localized region of non-$\diffK$-orthogonal cells while remaining Cartesian elsewhere. This structure makes TPFA remains exact away from the distorted region, whereas the central ribbon zone provides a controlled setting in which the adaptive discretization must identify where the full MFD operator is required. 

Uniform mesh refinement is then performed by doubling the number of cells per coordinate direction. We consider refinement levels $N_h \in \{ 8,16,32,64,128,256,512\}$ where $N_h$ denotes the number of cells along each coordinate direction with characteristic mesh size $h = 1/N_h$. Since the ribbon deformation occupies a fixed physical region, mesh refinement increases the resolution of the distorted zone while preserving its overall geometric extent.

For each refinement level, we first evaluate the residual-based indicators using the linear pressure field
\begin{equation*}
    p_{\lin}(x,y,z)=x+y+1,
\end{equation*}
together with the corresponding projected flux field obtained from the harmonic face projection described in Proposition~\ref{prop:admissibility_orthogonal}. Since TPFA exactly reproduces linear solutions on $\diffK$-orthogonal meshes, any nonzero indicator value is attribute solely to the local loss of 
$\diffK$-orthogonality induced by the ribbon deformation. Using the projected pressure-flux pair, we construct both the Local Adaptation and Global Adaptation indicators described in Section~3. Cells whose indicator exceeds a prescribed tolerance $\tau$ are classified as MFD-compatible, while the remaining cells retain the TPFA stencil. This procedure yields a tolerance-dependent cell partition, which is computed for $\tau \in \{10^{0},10^{-1},10^{-2},10^{-3},10^{-4}\}$.

For each tolerance level, the corresponding adaptive operator $\Madapt$ is assembled and used to solve a manufactured Darcy problem with exact solution 
\begin{equation*}
p_{\exact}(x,y)
=
\sin(2\pi x)\sin(2\pi y)+x,
\end{equation*}
subject to Dirichlet boundary conditions prescribed from the exact field. The associated source term is given by $f = 8\pi^2 \sin(2\pi x)\sin(2\pi y)$ and an isotropic tensor $\diffK = \mathbf{I}$ is prescribed throughout the domain. This manufactured solution introduces nontrivial pressure variations throughout the domain while concentrating the strongest gradients within the distorted ribbon region providing a stringent test of the adaptive discretization.

We measure the relative pressure error in the discrete $L^2$-norm and the relative flux error in the energy norm, considering the association $(p_\text{ref},\flux_\text{ref}) \rightarrow (p_{\exact},\flux_{\exact})$, for the pressure and exact flux. The errors are reported as functions of the mesh size $h$ for each tolerance level. Provided that the adaptive classifier correctly identifies the TPFA-inconsistent cells, the adaptive discretization is expected to recover the asymptotic convergence behavior of the underlying MFD scheme. Consequently, for sufficiently small values of $\tau$, we expect
$e_p=\mathcal{O}(h^2)$ and 
$e_m=\mathcal{O}(h)$ as $h \to 0$.

\begin{figure}[H]
    \centering
    \begin{minipage}{0.49\textwidth}
        \centering
        \includegraphics[width=\linewidth]{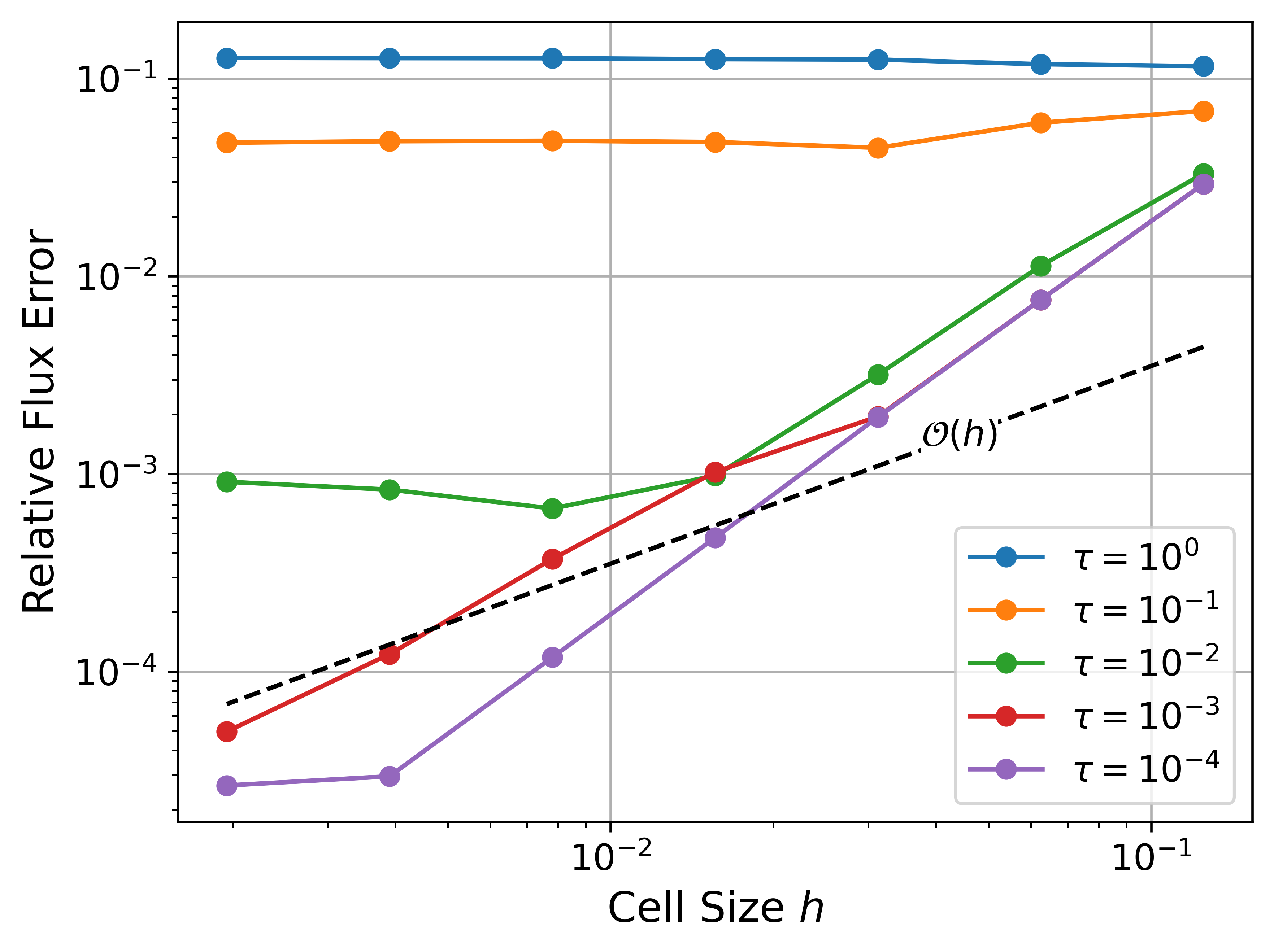}
        \vspace{0.25em}
        \small (a) Flux convergence (LA)
    \end{minipage}
    \hfill
    \begin{minipage}{0.49\textwidth}
        \centering
        \includegraphics[width=\linewidth]{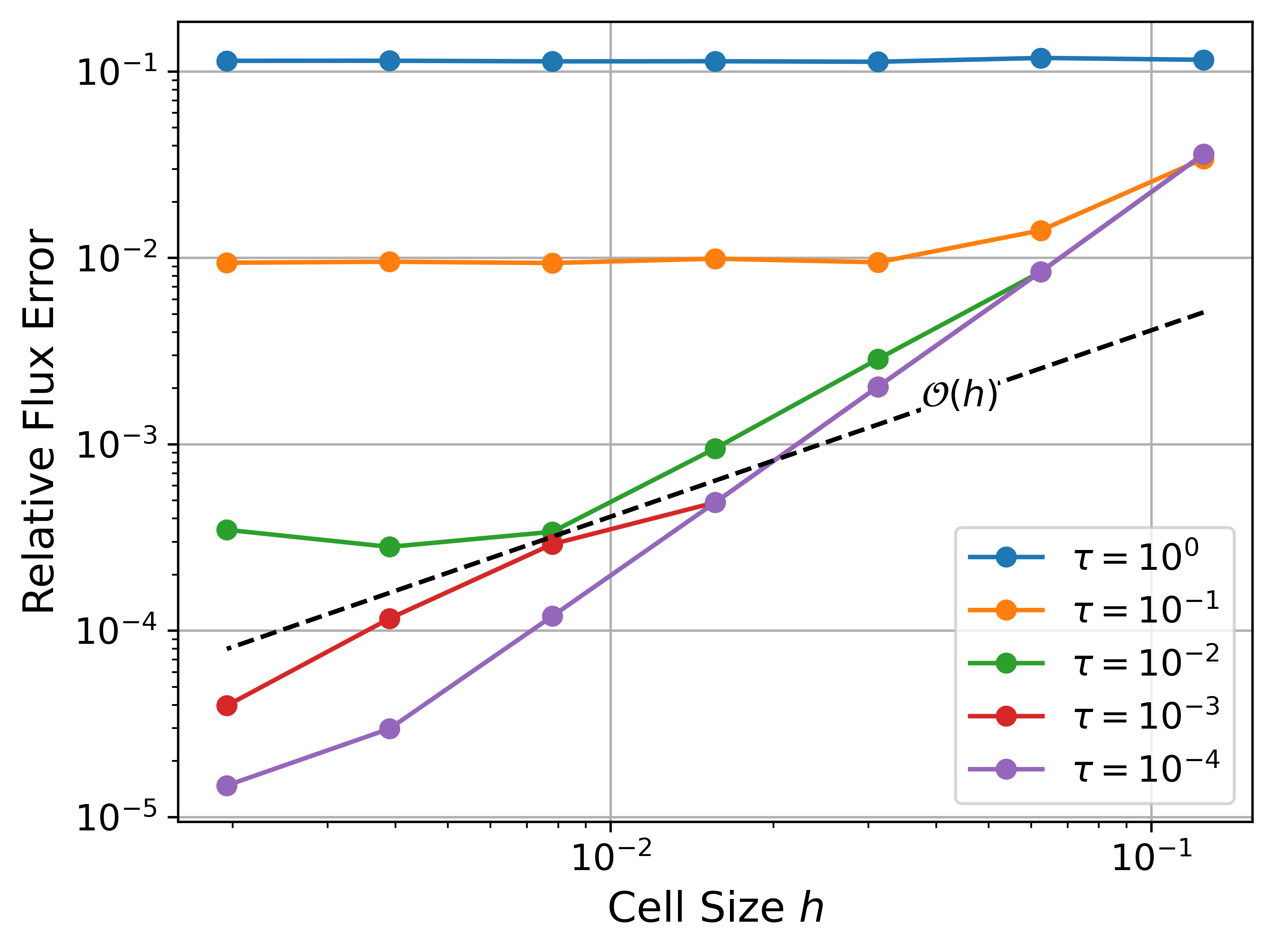}
        \vspace{0.25em}
        \small (b) Flux convergence (GA)
    \end{minipage}

    \vspace{0.75em}

    \begin{minipage}{0.49\textwidth}
        \centering
        \includegraphics[width=\linewidth]{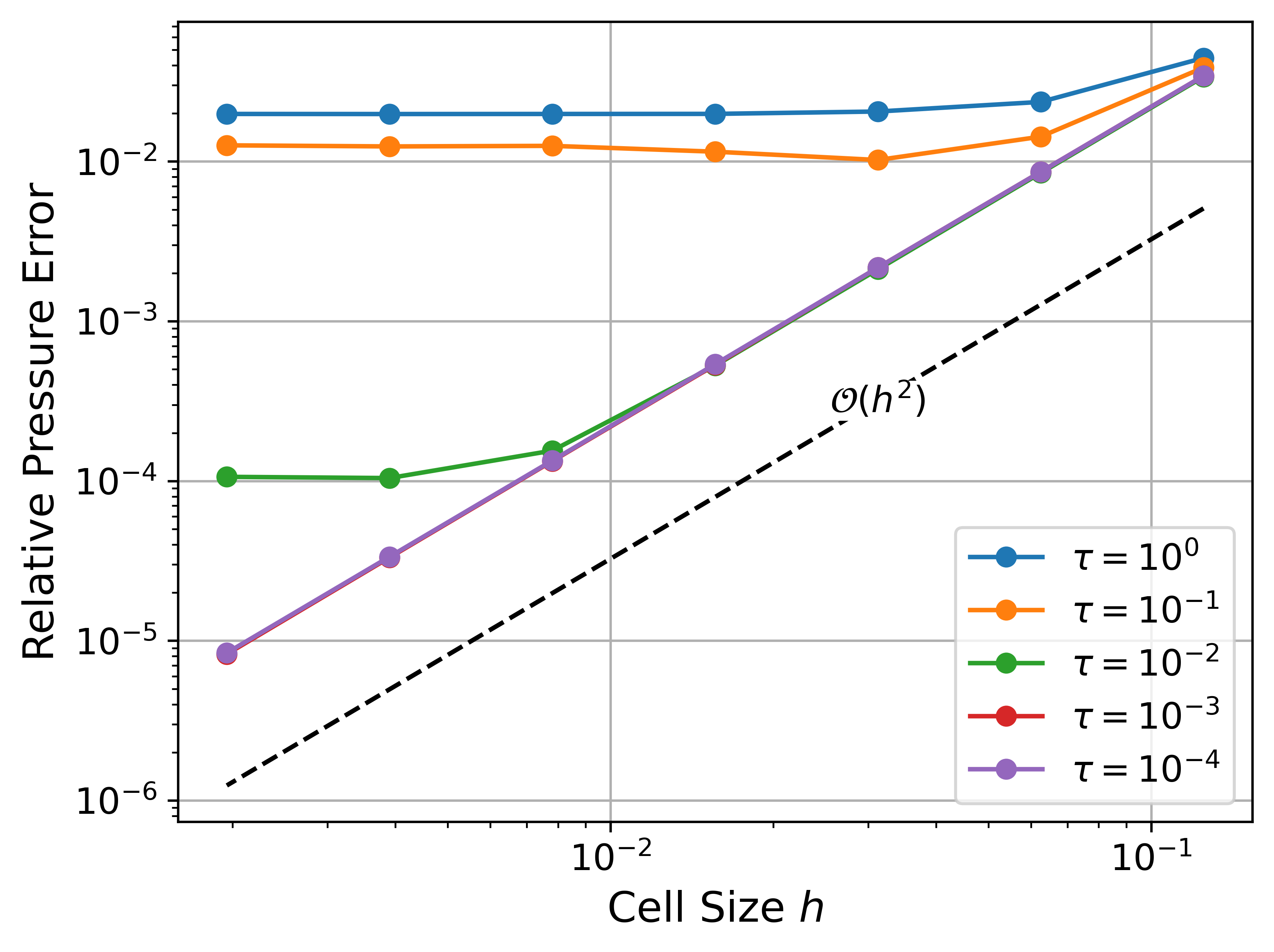}
        \vspace{0.25em}
        \small (c) Pressure convergence (LA)
    \end{minipage}
    \hfill
    \begin{minipage}{0.49\textwidth}
        \centering
        \includegraphics[width=\linewidth]{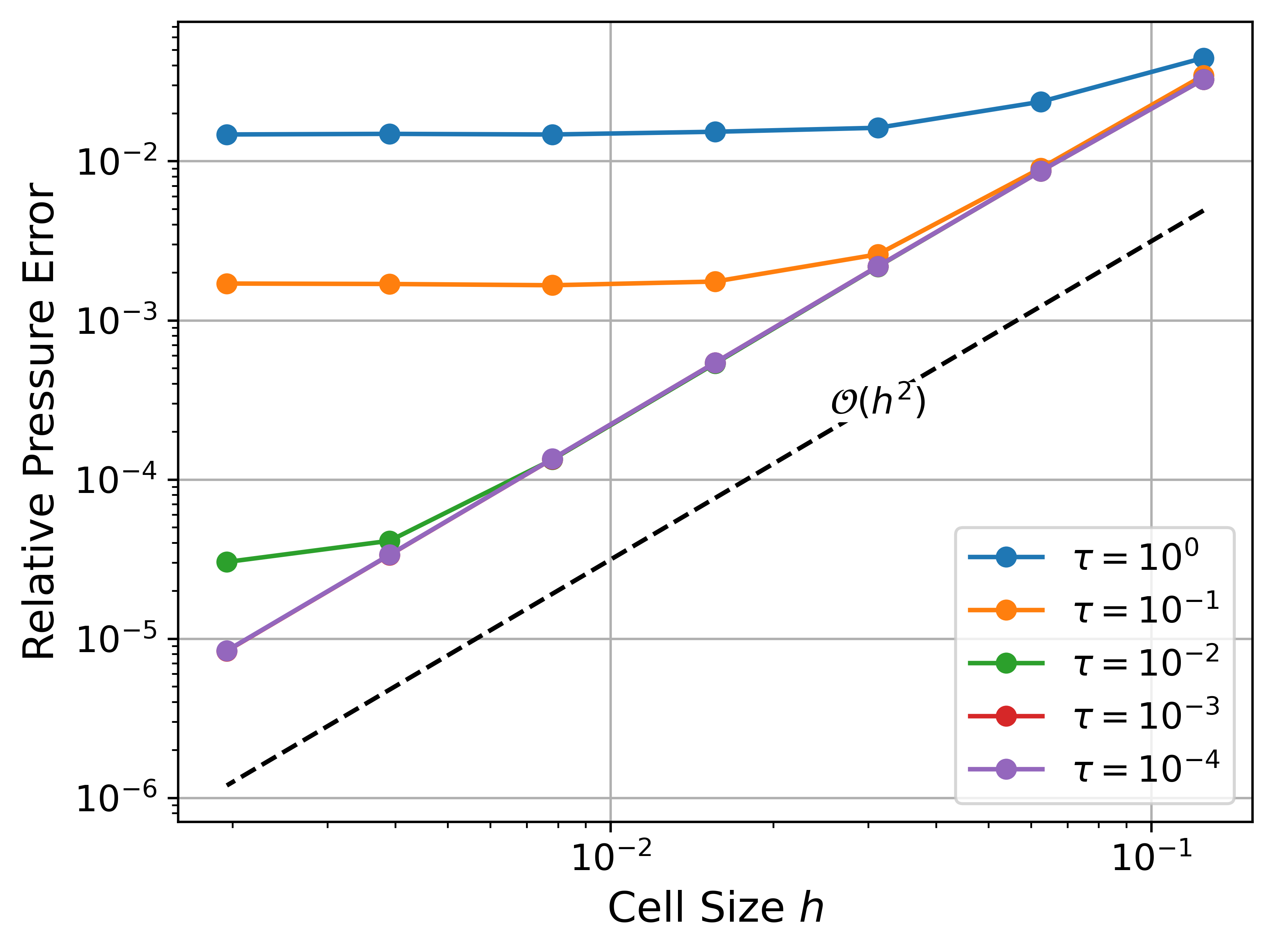}
        \vspace{0.25em}
        \small (d) Pressure convergence (GA)
    \end{minipage}

    \caption{Convergence behavior of the adaptive classification. Left column: local adaptation (LA); right column: global adaptation (GA). Top row: flux error convergence; bottom row: pressure error convergence. Each curve corresponds to a fixed tolerance $\tau$ and the mesh refinement levels $h$.}
    \label{fig:convergence_adaptive}
\end{figure}

From Figure~\ref{fig:convergence_adaptive}, two main observations emerge. The relative errors exhibit the expected joint dependence on the mesh size and the classifier tolerance; the pressure error behaves as $\mathcal{O}(\max(h^2,\tau))$, while the flux error behaves as $\mathcal{O}(\max(h,\tau))$. When the tolerance error dominates the mesh-induced discretization error, refining the mesh alone does not significantly improve the solution. Conversely, once the tolerance is reduced below the discretization floor, further tightening of $\tau$ yields little additional gain. Under the uniform structured refinement considered here, the flux convergence curves appear to display a higher-than-expected rate; this is likely due to symmetry-driven cancellations on the nearly Cartesian portions of the mesh. The general expected behavior, however, remains governed by the $\mathcal{O}(\max(h,\tau))$ bound.

The comparison between Local Adaptation and Global Adaptation further highlights the importance of incorporating residual information from neighboring cells. The GA strategy consistently reduces the error by approximately one order of magnitude for each order-of-magnitude decrease in $\tau$, in agreement with the tolerance-dependent estimate established in Theorem~\ref{thm:bound}. The LA curves, in contrast, exhibit a weaker and less uniform response to tolerance refinement, reflecting the inability of purely local residual indicators to account for cross-interface error propagation. This final observation further supports the global adaptive strategy as a reliable adaptation mechanism; consequently, it is employed exclusively in the remaining numerical experiments.

\subsection{SPE11B-based Validation Benchmark}
The next experiment considers a vertically extruded SPE11B benchmark model~\cite{Nordbotten2024} to evaluate the proposed residual-based classification procedure on a realistic reservoir-scale mesh. The distorted regions are spatially distributed across the domain and do not follow simple geometric patterns, making manual identification of cells requiring a full MFD treatment impractical. The computational domain is
\[
\Omega = [0, L_x] \times [0, L_y] \times [0, L_z] =
[0, 8400] \times [0,1200] \times [0,100].
\]
After extrusion, the resulting mesh contains $99031$ cells, $212326$ faces and $113296$ vertices. Figure~\ref{fig:spe11b_mesh} shows the resulting extruded benchmark geometry together with zoomed views of the distorted regions. 

\begin{figure}[H]
    \centering
    \includegraphics[width=1.0\textwidth]{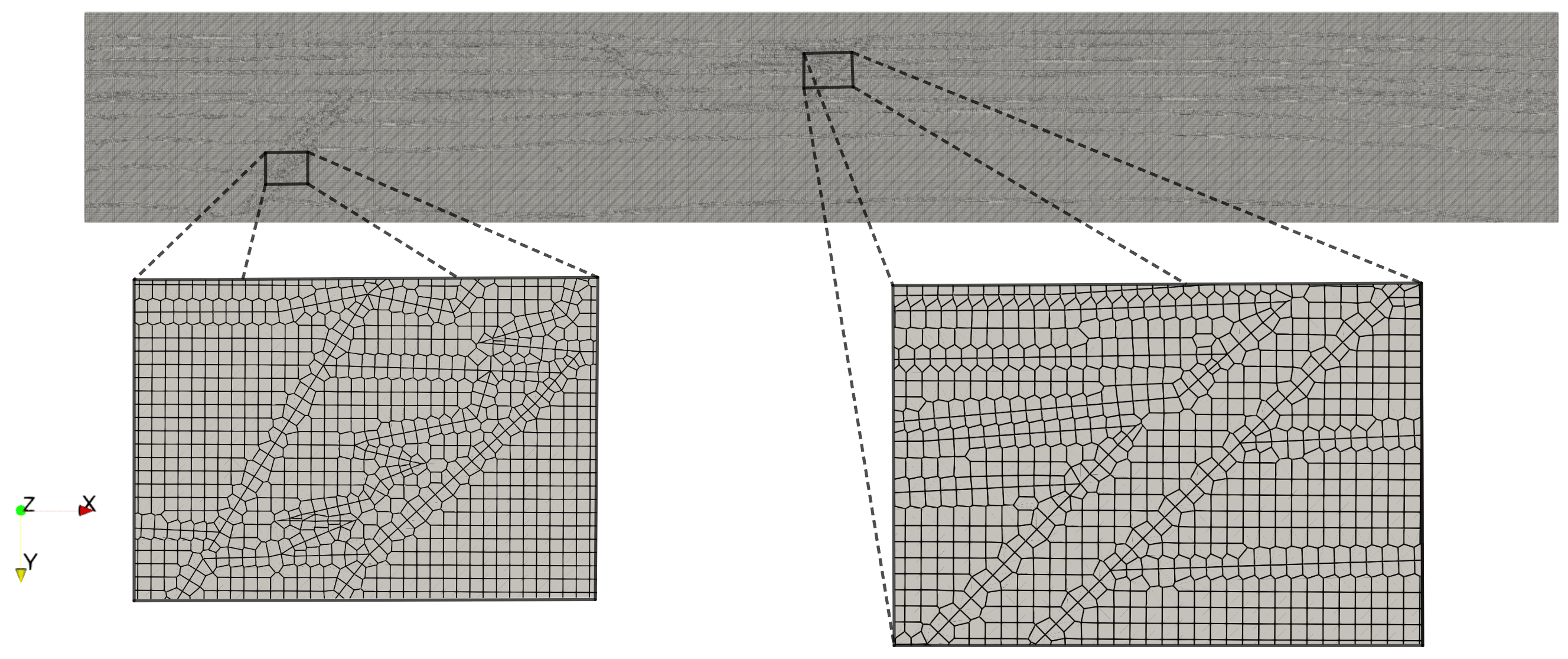}
    \caption{
    Top view of the extruded SPE11B benchmark geometry used in the validation experiment. The enlarged views highlight representative regions containing distorted and irregular polyhedral cells. Similar distorted patterns are distributed throughout the computational domain.
    }
    \label{fig:spe11b_mesh}
\end{figure}

To construct the adaptive indicators, we prescribe the linear pressure field 
\[
p_{\lin}(x,y,z) = 1 - \frac{x}{L_x},
\]
corresponding to the constant pressure gradient $\mathbf{g} = [-1/L_x, 0, 0]^T$. An isotropic permeability tensor $\diffK = \mathbf{I}$ is assumed throughout the domain. Following the projection procedure introduced in Section~3, the corresponding projected flux field is obtained using the harmonic interface permeability and serves as the reference admissible flow field for the classification procedure. Relative flux errors are evaluated with respect to the projected field in the adaptive energy norm $\Madapt$.

We first visualize the evolution of the adaptive cell classification as the tolerance parameter $\tau$ varies. Figure~\ref{fig:spe11b_cell_evolution} shows that the residual-based indicator progressively identifies and captures the geometrically distorted polyhedral regions as the tolerance decreases. For $\tau = 10^{-1}$, only a subset of the distorted regions are classified as MFD cells, while additional cells are activated as the TPFA consistency criterion becomes stricter. The overall classification pattern remains strongly correlated with the localized distorted regions visible in Figure~\ref{fig:spe11b_mesh}. Starting from approximately $\tau = 10^{-6}$, the cell classification stabilizes and the number of TPFA cells remains essentially unchanged, indicating that the adaptive procedure has fully identified the regions requiring a full MFD discretization.

\begin{table}[H]
\centering
\scriptsize
\setlength{\tabcolsep}{4pt}

\begin{tabular}{c|ccccccc}
\hline
\textbf{Metric}
& \textbf{Full TPFA}
& $\mathbf{10^{0}}$
& $\mathbf{10^{-1}}$
& $\mathbf{10^{-2}}$
& $\mathbf{10^{-4}}$
& $\mathbf{10^{-6}}$
& \textbf{Full MFD} \\
\hline

TPFA cells
& 99031
& 98945
& 74781
& 57081
& 55296
& 55271
& 0 \\

Sparsity reduction (\%)
& 88.92
& 88.85
& 61.72
& 46.23
& 44.79
& 44.77
& 0.00 \\

Relative flux error
& $4.84 \times 10^{-2}$
& $4.82 \times 10^{-2}$
& $1.07 \times 10^{-2}$
& $3.50 \times 10^{-4}$
& $9.74 \times 10^{-6}$
& $3.79 \times 10^{-11}$
& $3.75 \times 10^{-11}$ \\

\hline
\end{tabular}

\caption{
Tolerance-dependent TPFA cell count, sparsity reduction, and relative flux error for the validation benchmark. Results for tolerances below $\tau = 10^{-6}$ are omitted since all reported metrics stabilize and remain essentially unchanged.
}
\label{tab:spe11b_validation_results}
\end{table}

\begin{figure}[H]
\centering

\begin{minipage}{1.0\textwidth}
    \centering
    \includegraphics[width=\linewidth]{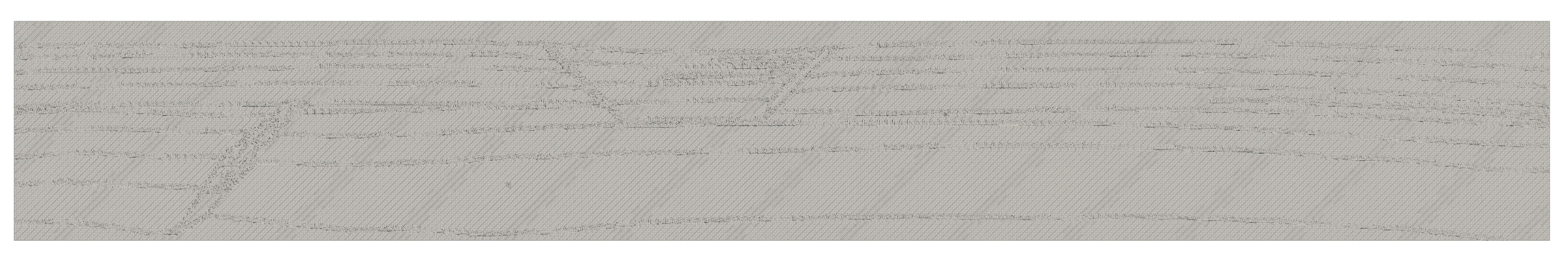}
    \\
    \small (a) Full TPFA
\end{minipage}

\vspace{0.5em}

\begin{minipage}{1.0\textwidth}
    \centering
    \includegraphics[width=\linewidth]{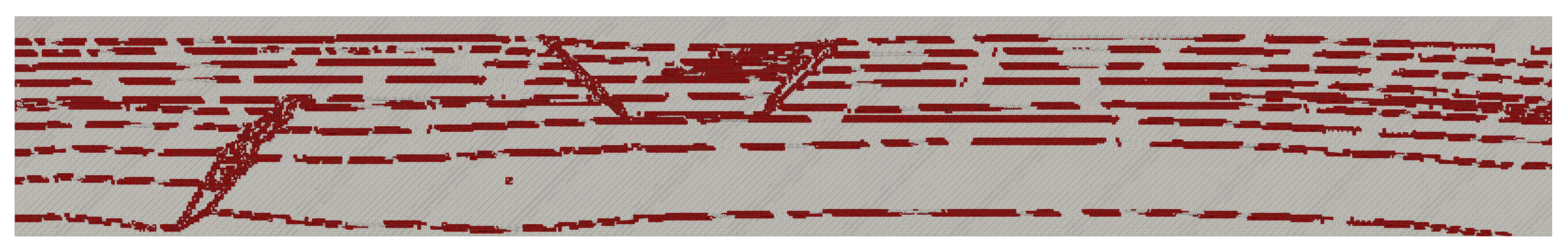}
    \\
    \small (b) $\tau = 10^{-1}$
\end{minipage}

\vspace{0.5em}

\begin{minipage}{1.0\textwidth}
    \centering
    \includegraphics[width=\linewidth]{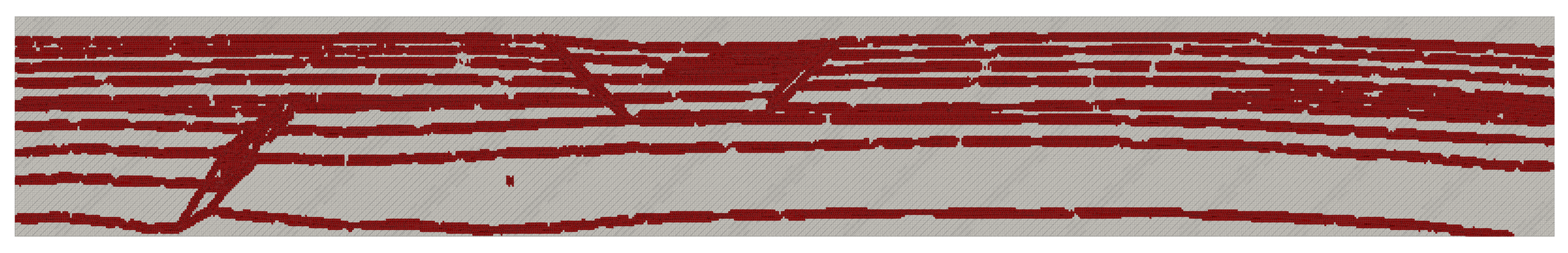}
    \\
    \small (c) $\tau = 10^{-2}$
\end{minipage}

\vspace{0.5em}

\begin{minipage}{1.0\textwidth}
    \centering
    \includegraphics[width=\linewidth]{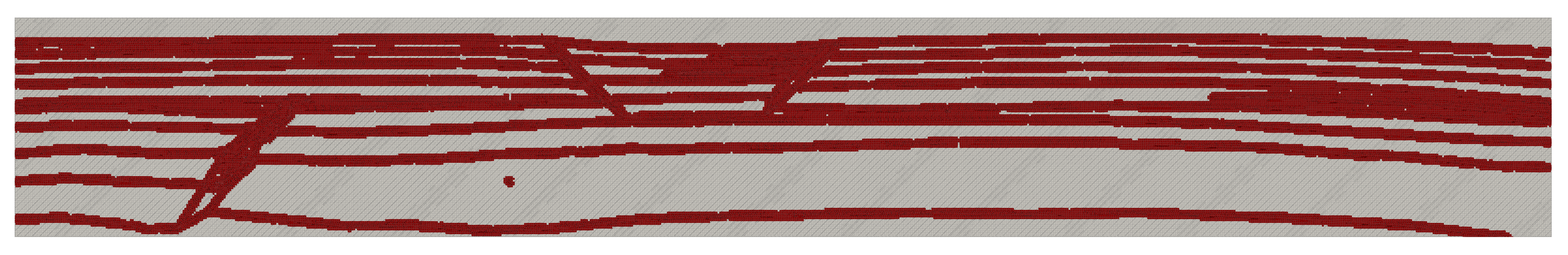}
    \\
    \small (d) $\tau = 10^{-6}$
\end{minipage}

\caption{Evolution of the adaptive cell classification for the extruded SPE11B benchmark. Red cells denote regions where the full MFD discretization is applied.
}
\label{fig:spe11b_cell_evolution}
\end{figure}

Table~\ref{tab:spe11b_validation_results} summarizes the evolution of the TPFA cell count together with the corresponding sparsity reduction and relative flux error for different values of the tolerance $\tau$. Consistent with the classification patterns observed in the cell evolution figure, tightening the tolerance progressively reduces the number of cells classified as TPFA-compatible, resulting in a denser adaptive inner-product matrix $\Madapt$ and a corresponding decrease in sparsity reduction. Even after the classification has stabilized, the adaptive MFD framework retains approximately $45\%$ sparsity reduction while achieving a relative flux error that is nearly indistinguishable from that of the full MFD discretization. These results demonstrate that a significant fraction of the expensive MFD stencil can be eliminated without sacrificing accuracy.

\begin{figure}[H]
\centering
\includegraphics[width=0.5\textwidth]{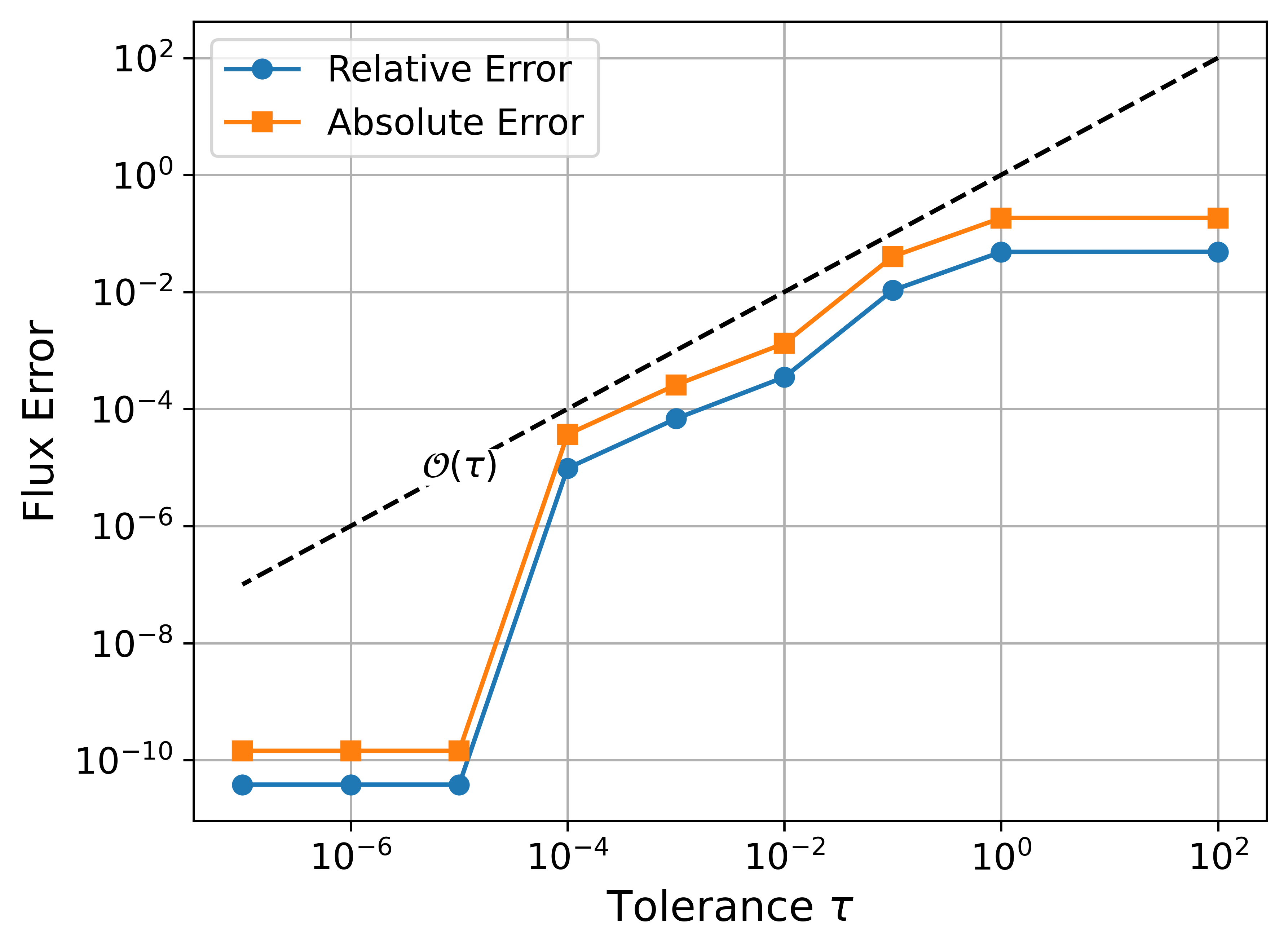}
\caption{
Mass flux error versus tolerance for the SPE11B-based model. The diagonal reference line represents the order of tolearnce $\tau$.
}
\label{fig:flux_error_case1}
\end{figure}

Figure~\ref{fig:flux_error_case1} provides a more detailed view of the tolerance-dependent behavior of both the relative and absolute flux errors. Consistent with the theoretical estimate in Theorem~\ref{thm:bound}, the relative flux error remains bounded by the prescribed tolerance and decreases in a stable manner as $\tau$ becomes smaller. This behavior demonstrates that the tolerance parameter serves as an effective mechanism for controlling the accuracy of the adaptive discretization, allowing the resulting flux error to be estimated a priori from the prescribed threshold.

\subsection{3D Faulted Reservoir Benchmark}
We next evaluate the adaptive strategy on a three-dimensional two-fault model. The computational domain is
\[
\Omega = [0,L_x] \times [0, L_y] \times [0, L_z]
= [0,1] \times [0, 1] \times [0, 0.4],
\]
and contains two fault surfaces that introduce geometric complexity into the polyhedral mesh, as shown in Figure~\ref{fig:two-fault_mesh}. For this benchmark, we consider two permeability tensor settings. In the first case, the domain is divided into three horizontal layers, where each layer is assigned a different isotropic permeability tensor. In the second case, the same layered structure is retained, but each layer additionally contains heterogeneous anisotropic permeability tensors varying from cell to cell. The detailed construction of these permeability fields is described in the following subsections. 
\begin{figure}[H]
\centering
\includegraphics[width=0.57\textwidth]{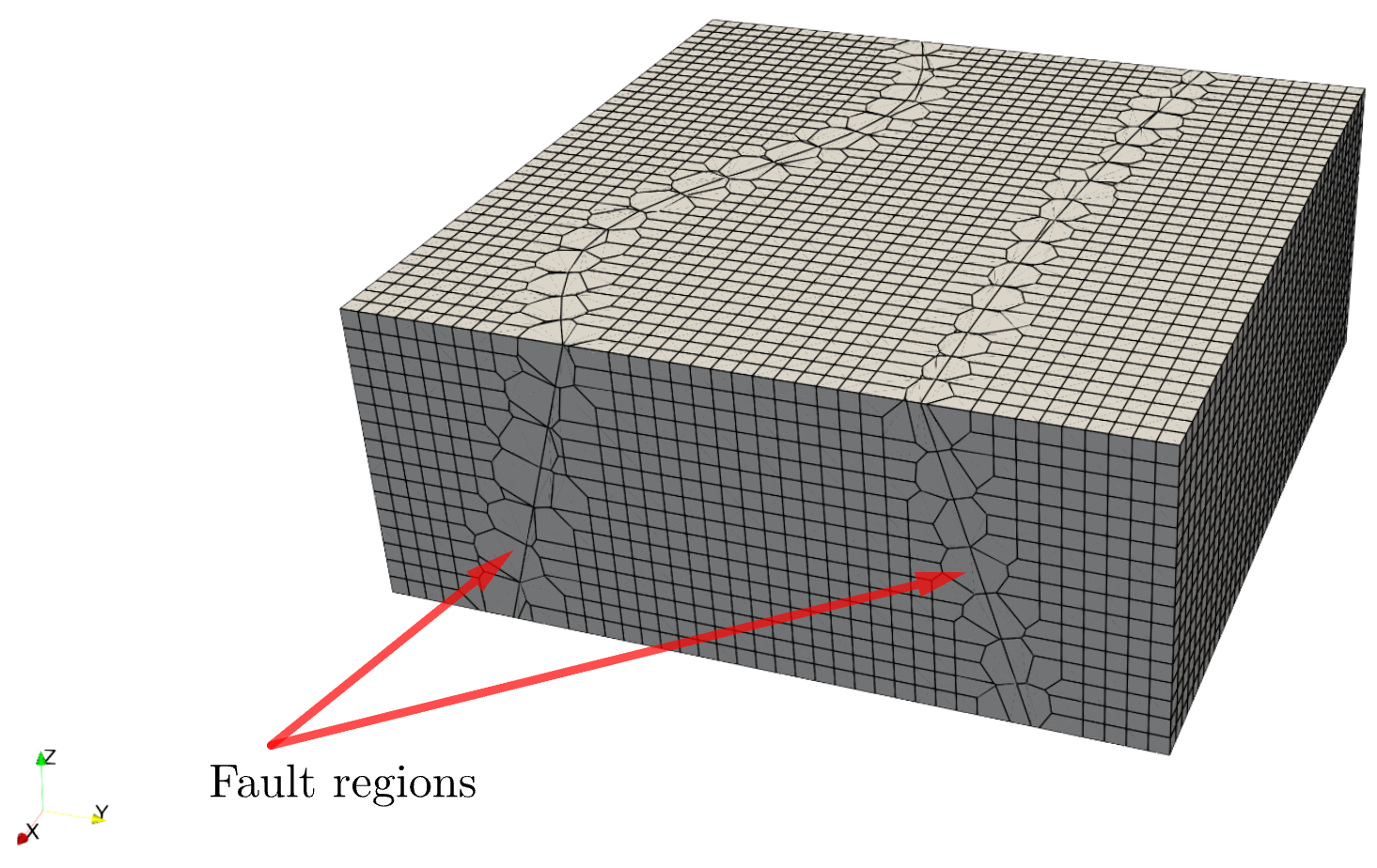}
\caption{A two-fault model generated by the MRST unstructured Voronoi gridding framework~\protect\cite{advancedMRST,Berge2018}. The mesh is predominantly Cartesian away from the faults, while irregular polyhedral cells are introduced near the fault regions to capture geometric complexity. The mesh consists of 22056 cells, 76721 faces, and 32649 vertices.}
\label{fig:two-fault_mesh}
\end{figure}

To drive flow across the entire domain, we impose a corner-to-corner flow configuration in the two-fault model. The inlet is located near the upper corner $(0,0,L_z)$ while the outlet is placed near the diagonally opposite lower corner $(L_x, L_y, 0)$. The flow problem is governed by the mixed Darcy system introduced in Section 2 with the permeability tensor $\diffK$ specified according to the permeability configuration considered in each experiment. The pressure boundary conditions are imposed on the inlet and outlet cells and homogeneous no-flow conditions are prescribed on the remaining boundaries.

For the residual-based classification procedure, we prescribe the linear pressure field
\[
p_\lin (x,y,z) = 1 - \frac{x}{L_x} - \frac{y}{L_y} - \frac{z}{L_z}
\]
which corresponds to the constant gradient field $\mathbf{g} = [-1/L_x, -1/L_y, -1/L_z]^T$. A harmonic-average-based projected flux field is also constructed from $\mathbf{g}$ using the neighboring-cell permeability information. The resulting projected pressure-flux pair is then used to evaluate the residual-based indicators and determine the adaptive TPFA/MFD partition.

After solving the flow problem, the computed numerical flux field is used to evolve the transport equation
\[
\phi \frac{\partial S}{\partial t} + 
\nabla \cdot (\mathbf{m}S) = 0 \quad \text{in } \Omega,
\]
where $S$ denotes the transported saturation field and $\phi = 0.3$ is the porosity. The transport equation is discretized using a fully implicit finite-volume upwind scheme based on the computed numerical fluxes. On the inflow boundary, the injected saturation is prescribed as $S = S_\text{inj}$ and the initial condition is given by $S(x,0) = 0$ throughout the domain. Since no closed-form analytical solution is available for this faulted configuration, the full MFD is used as the numerical reference solution for evaluating both flux and saturation error in the following experiments.

\subsubsection{Layered Isotropic Permeability Case}
The first permeability configuration consists of a layered isotropic permeability field, shown in Figure~\ref{fig:layered_perm}. The permeability tensor is assigned cell-wise according to the cell-center depth coordinate $z$, with three piecewise-constant isotropic layers given by
\[
k(z) = 
\begin{cases}
    4, & z < 0.13, \\
    1, & 0.13 \le z \le 0.26, \\
    0.1, & z \ge 0.26,
\end{cases}
\]
and the permeability tensor is taken as 
\[
\diffK(z) = k(z) \mathbf{I}
\]
where $\mathbf{I}$ denotes the $3 \times 3$ identity tensor. Thus, the upper layer is the most permeable, while the lower layer is the least permeable.

\begin{figure}[H]
\centering

\begin{minipage}{0.32\textwidth}
    \centering
    \includegraphics[width=\linewidth]{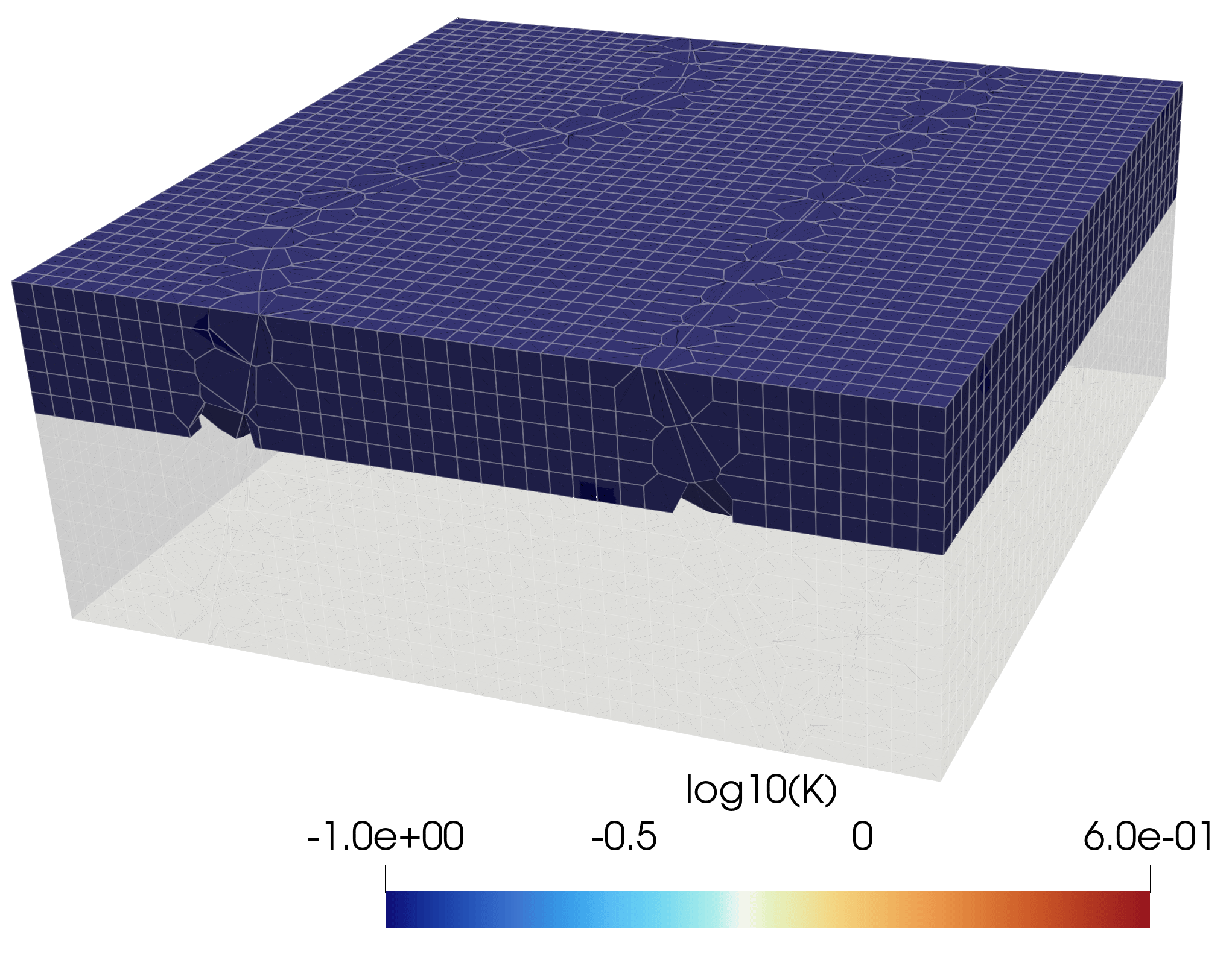}
    \\
    \small (a) Top layer: $\diffK(z) = 4 \mathbf{I}$
\end{minipage}
\hfill
\begin{minipage}{0.32\textwidth}
    \centering
    \includegraphics[width=\linewidth]{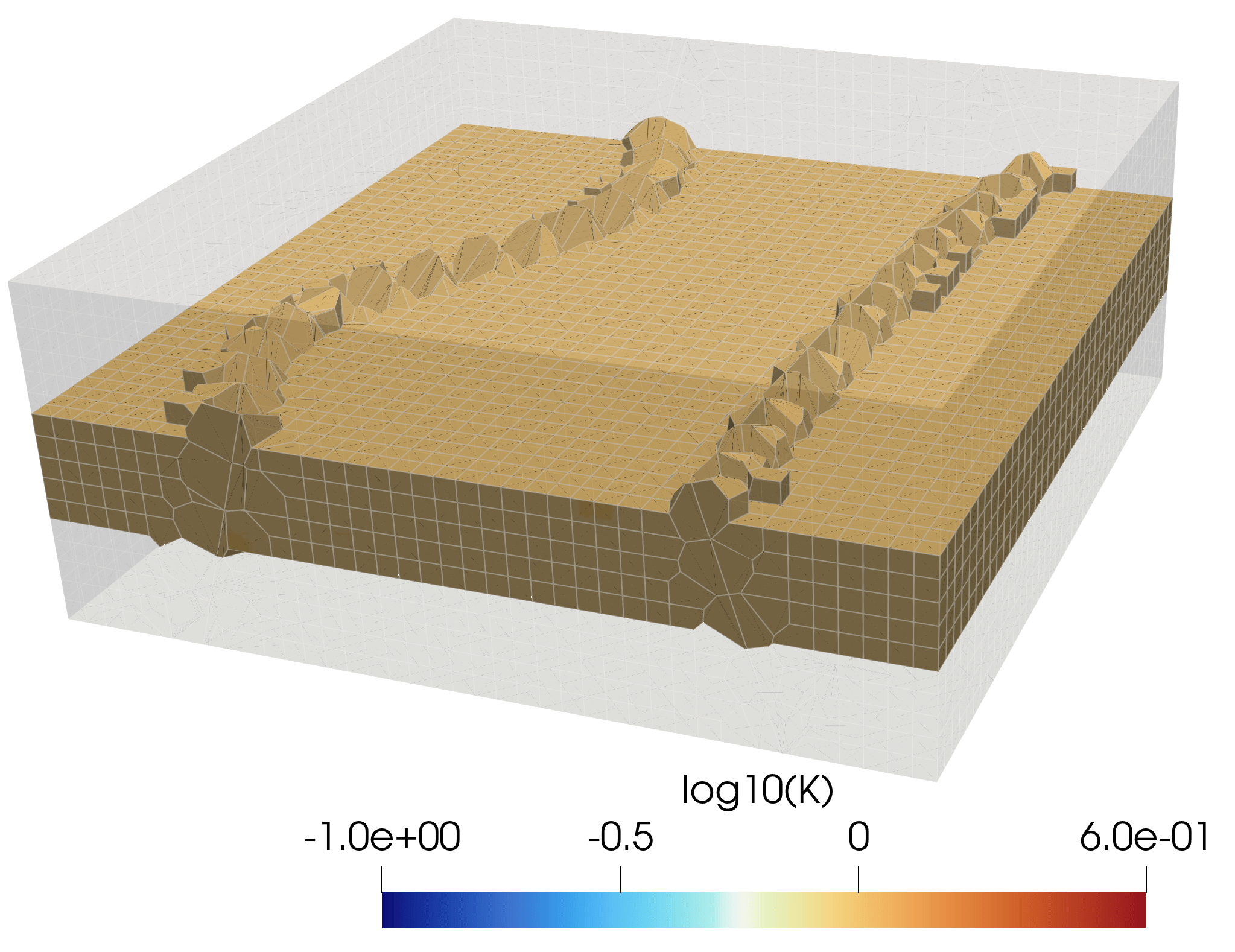}
    \\
    \small (b) Middle layer: $\diffK(z) = \mathbf{I}$
\end{minipage}
\hfill
\begin{minipage}{0.32\textwidth}
    \centering
    \includegraphics[width=\linewidth]{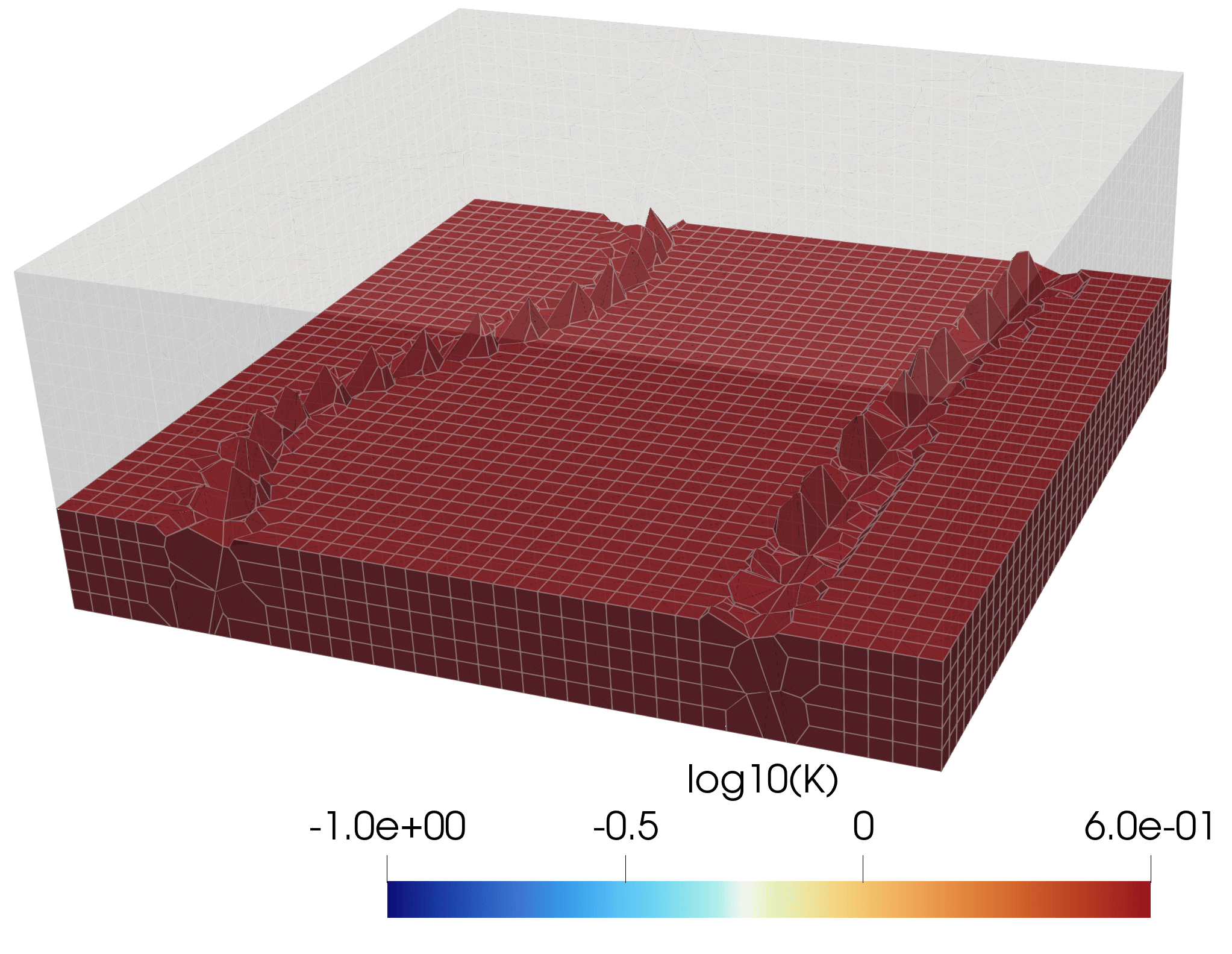}
    \\
    \small (c) Bottom layer: $\diffK(z) = 0.1 \mathbf{I}$
\end{minipage}

\caption{
Three-layer isotropic permeability field used in the 3D two-fault benchmark. Colors indicate $\log_{10}(k)$, where $k$ denotes the scalar permeability assigned to each layer.
}
\label{fig:layered_perm}
\end{figure}

\noindent Figure~\ref{fig:layered_homogeneous_fault_cells} illustrates the evolution of the TPFA/MFD classification as the tolerance parameter is reduced. As the classification criterion becomes more restrictive, an increasing number of cells near the fault surfaces are identified as MFD-compatible. Despite the permeability contrast between layers, the permeability tensor remains isotropic throughout the domain. The resulting classification pattern remains strongly localized around the fault regions, indicating that stencil selection is driven primarily by geometric distortion rather than permeability variation.

\begin{figure}[H]
\centering

\begin{minipage}{0.32\textwidth}
    \centering
    \includegraphics[width=\linewidth]{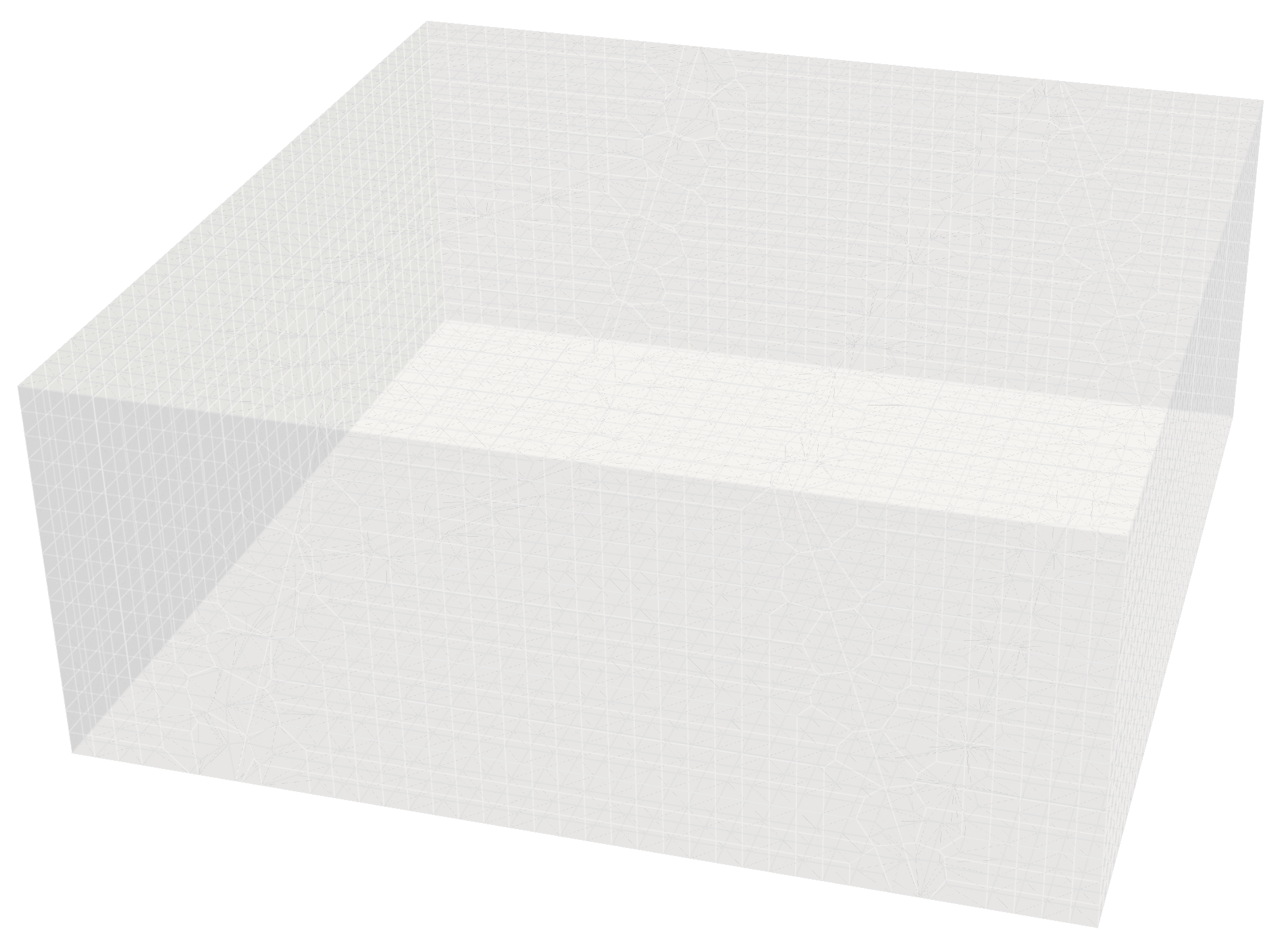}
    \\
    \small (a) Full TPFA
\end{minipage}
\hfill
\begin{minipage}{0.32\textwidth}
    \centering
    \includegraphics[width=\linewidth]{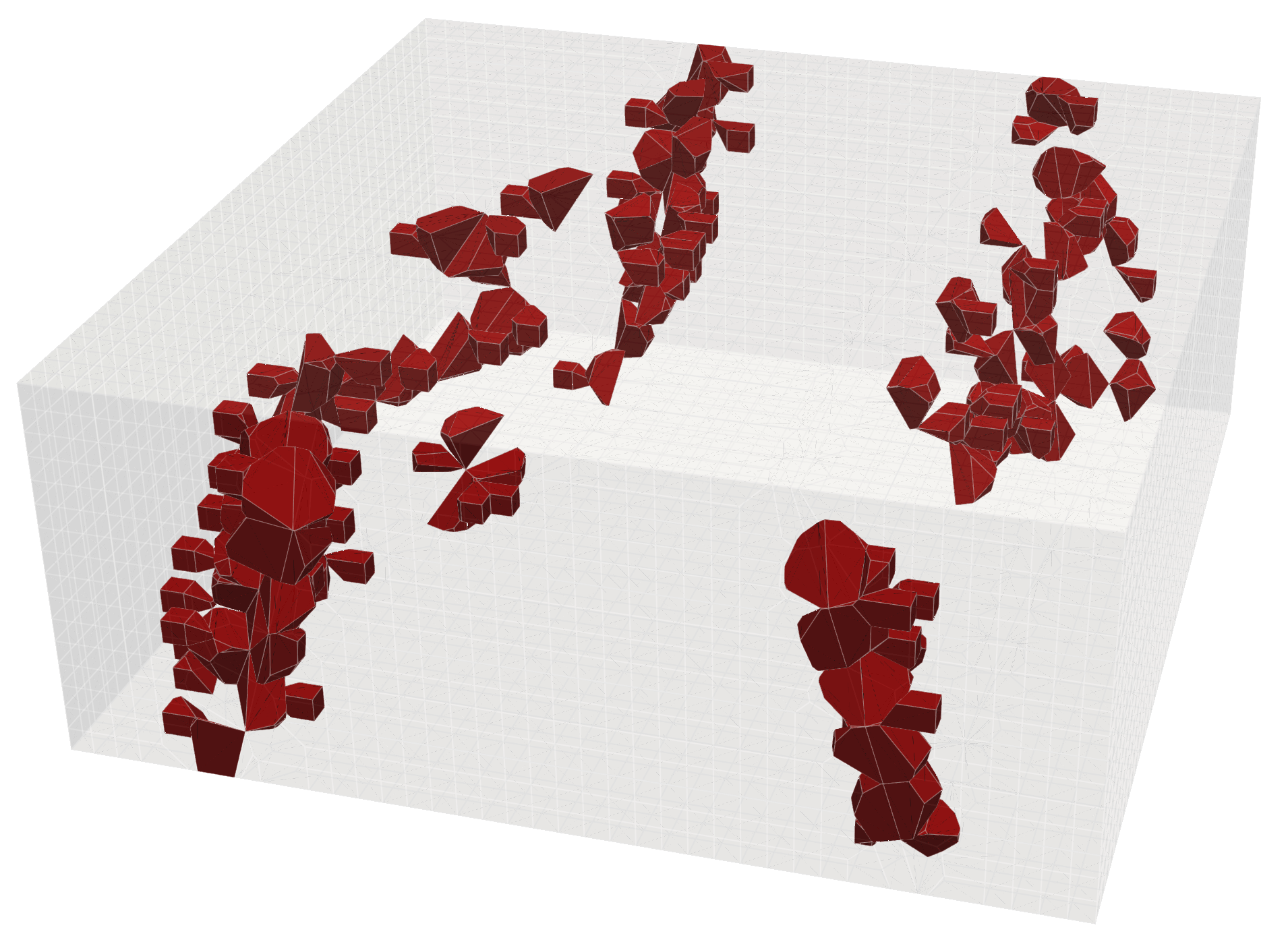}
    \\
    \small (b) $\tau = 10^{0}$
\end{minipage}
\hfill
\begin{minipage}{0.32\textwidth}
    \centering
    \includegraphics[width=\linewidth]{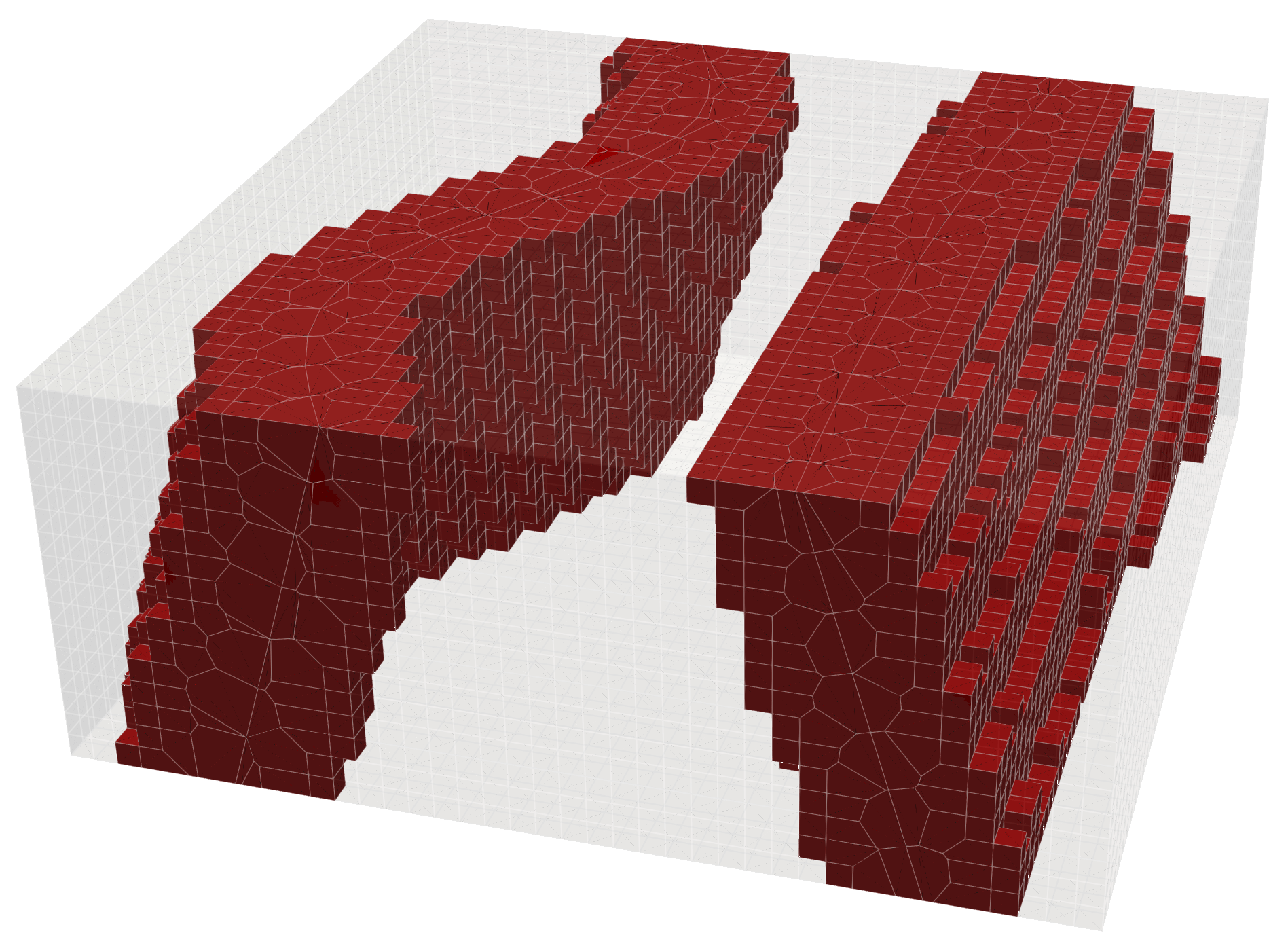}
    \\
    \small (c) $\tau = 10^{-8}$
\end{minipage}

\caption{
Evolution of cell classification for the layered isotropic permeability case in the two-fault model. The MFD-activated regions (shown in red) become increasingly concentrated around the fault surfaces as the tolerance $\tau$ decreases.
}
\label{fig:layered_homogeneous_fault_cells}
\end{figure}

\begin{table}[H]
\centering
\scriptsize
\setlength{\tabcolsep}{4pt}

\begin{tabular}{c|ccccccc}
\hline
\textbf{Metric}
& \textbf{Full TPFA}
& $\mathbf{10^{0}}$
& $\mathbf{10^{-1}}$
& $\mathbf{10^{-2}}$
& $\mathbf{10^{-4}}$
& $\mathbf{10^{-6}}$
& $\mathbf{10^{-8}}$ \\
\hline

TPFA cells
& 22056
& 21895
& 18398
& 16511
& 16068
& 16055
& 16054 \\

Sparsity reduction (\%)
& 92.31
& 89.56
& 52.80
& 47.07
& 45.82
& 45.79
& 45.78 \\

Relative flux error
& $7.31 \times 10^{-3}$
& $7.27 \times 10^{-3}$
& $1.07 \times 10^{-3}$
& $9.04 \times 10^{-14}$
& $7.44 \times 10^{-14}$
& $5.13 \times 10^{-14}$
& $4.10 \times 10^{-14}$ \\

Relative saturation error
& $1.46 \times 10^{-3}$
& $1.34 \times 10^{-3}$
& $2.25 \times 10^{-5}$
& $2.33 \times 10^{-14}$
& $1.98 \times 10^{-14}$
& $3.07 \times 10^{-14}$
& $1.09 \times 10^{-14}$ \\

\hline
\end{tabular}
\caption{
Tolerance-dependent TPFA cell count, sparsity reduction, and relative errors for the layered isotropic permeability case. The full MFD solution is used as the reference solution for the error calculations.
}
\label{tab:layered_isotropic_results}
\end{table}

Table~\ref{tab:layered_isotropic_results} quantifies the impact of the residual-based classification on both accuracy and operator complexity. A notable transition occurs near $\tau = 10^{-2}$, where the flux and saturation errors drop abruptly to the $10^{-14}$ level, indicating that the dominant TPFA-incompatible regions have already identified by the indicator. At the same tolerance, the adaptive discretization retains approximately $47\%$ sparsity reduction, demonstrating that a substantial fraction of the full MFD can be removed while preserving a solution that is virtually identical to the full MFD reference. 

\begin{figure}[H]
\centering

\begin{minipage}{0.49\textwidth}
    \centering
    \includegraphics[width=\linewidth]{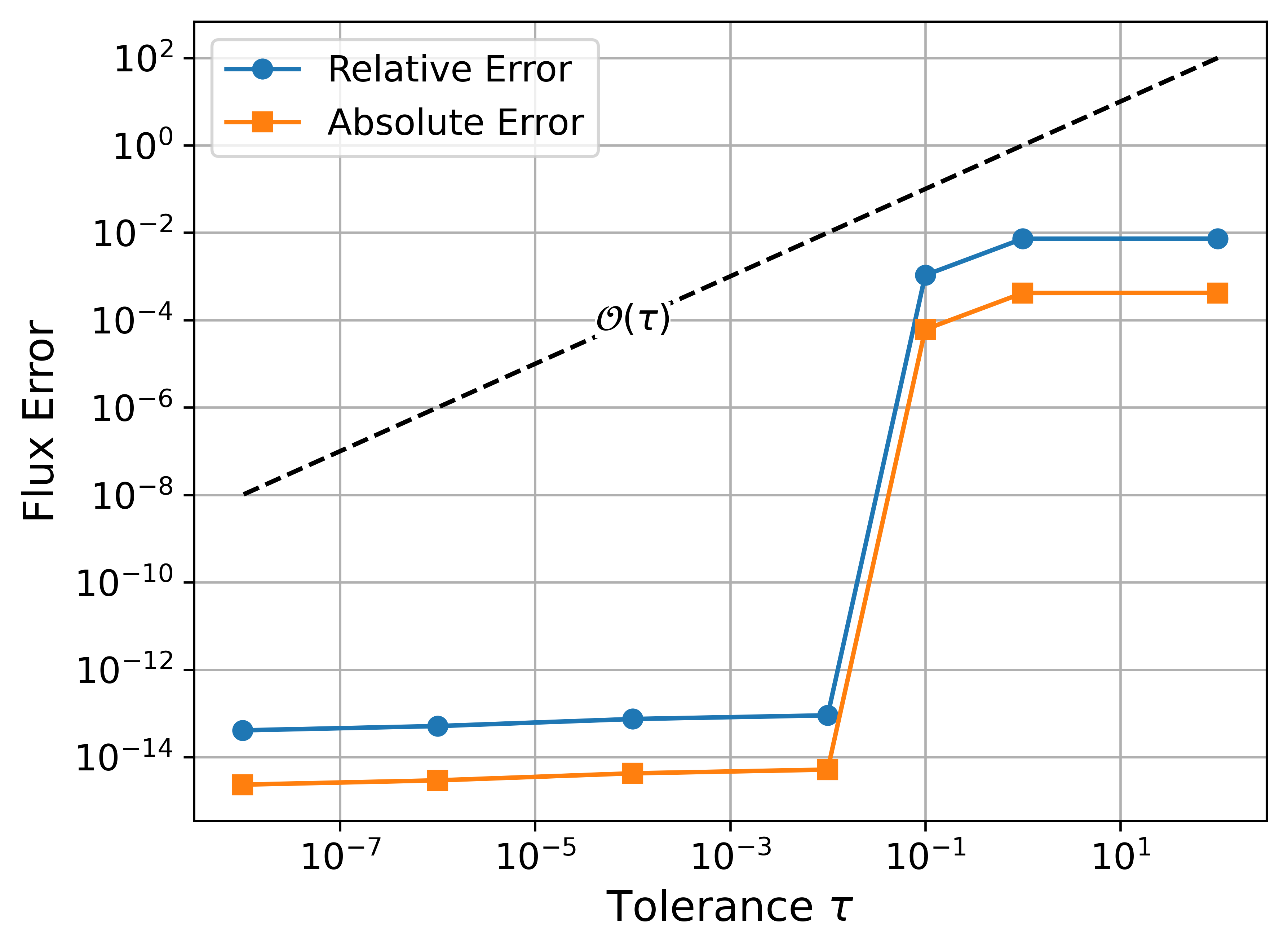}
    \\
    \small (a) Mass flux error
\end{minipage}
\hfill
\begin{minipage}{0.49\textwidth}
    \centering
    \includegraphics[width=\linewidth]{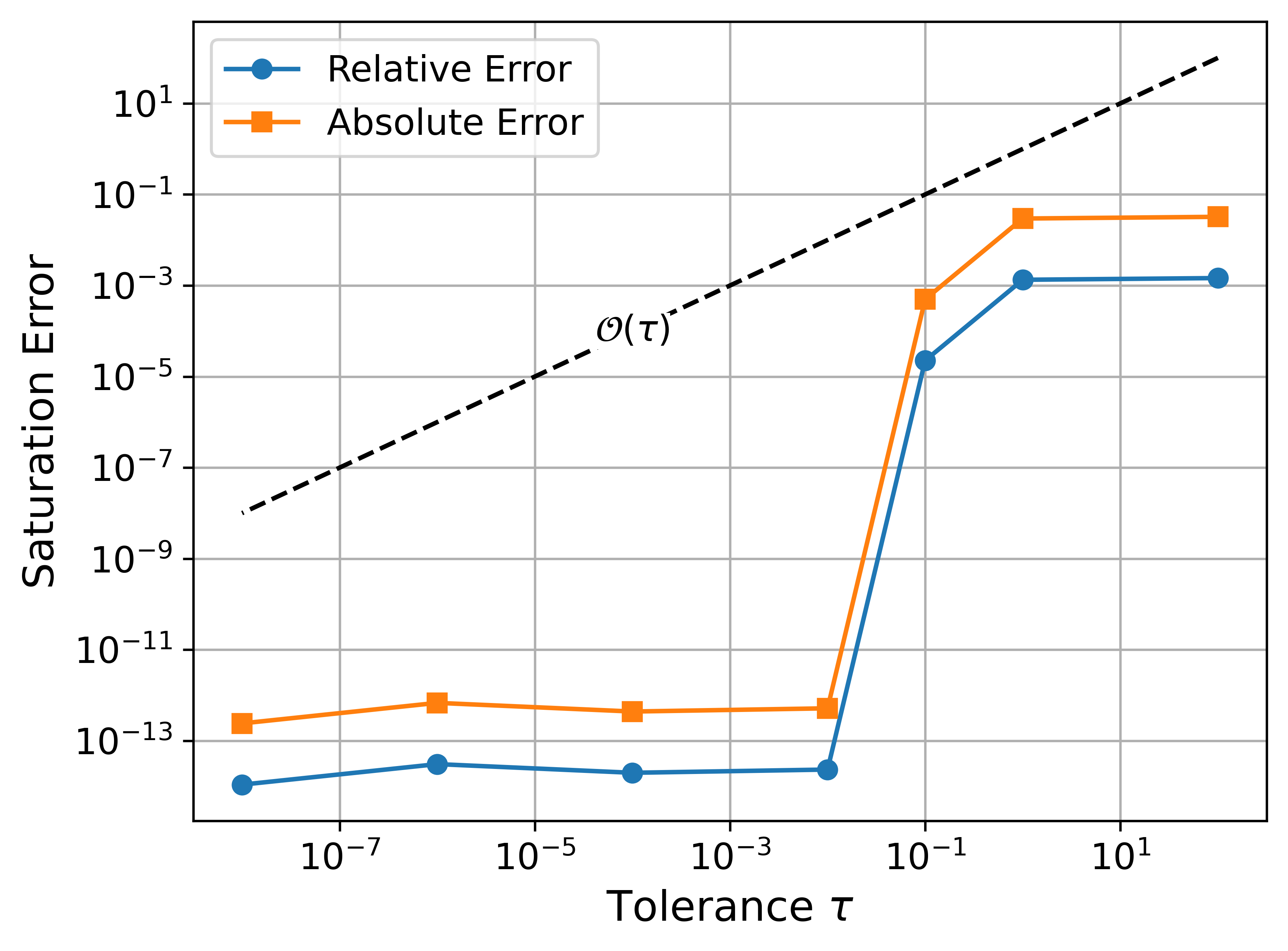}
    \\
    \small (b) Saturation error
\end{minipage}

\caption{
Relative flux and saturation errors as functions of the tolerance for the layered isotropic permeability case. The dashed diagonal line represents the prescribed tolerance $\tau$.
}
\label{fig:heterogeneous_error_plots}
\end{figure}

As expected, the flux error produced by the adaptive scheme remains bounded by the prescribed tolerance parameter, as shown in Figure~\ref{fig:heterogeneous_error_plots}. A pronounced transition occurs near $\tau = 10^{-2}$, consistent with the behavior observed in Table~\ref{tab:layered_isotropic_results}. For smaller tolerance values, only a modest number of additional cells are classified as MFD-compatible, resulting in slightly smaller errors. However, these refinement do not change the overall error magnitude and the error curves rapidly approach a numerical plateau. Beyond $\tau = 10^{-6}$, the cell classification remains essentially unchanged, suggesting that all regions requiring the full MFD stencil have already been identified. In addition, a similar trend is observed in the saturation errors. Both the transition near $\tau = 10^{-2}$ and the subsequent numerical plateau closely mirror the behavior of the flux errors. This agreement highlights the strong dependence of the transport solution on the underlying numerical flux field, indicating that improvement in flux accuracy are directly reflected in the quality of the computed saturation solution.

\subsubsection{Layered Heterogeneous Anisotropic Permeability Case} 
For the second permeability configuration, we assign a layered heterogeneous anisotropic permeability field to the same two-fault model. The domain is also divided into three horizontal layers according to the cell-center depth coordinate $z$, with base permeability values $k_{\text{base}} = 4,1$ and $0.1$ in the upper, middle and lower layers, respectively. Within each layer, a cell-wise log-normal perturbation is introduced through the principal permeability component $k_{xx}$. Specifically, for each cell,
\[
k_{xx} = k_{\text{base}}(z)\exp(0.1 \xi), 
\quad
\xi \sim \mathcal{N}(0,1),
\]
and the remaining diagonal components are defined by fixed anisotropy ratios
\[
k_{yy} = 0.5 k_{xx}, \quad
k_{zz} = 0.2 k_{xx}.
\]
Hence, the cell-wise permeability tensor is given by
\[
\diffK = 
\begin{bmatrix}
    k_{xx} & 0 & 0 \\
    0 & k_{yy} & 0 \\
    0 & 0 & k_{zz}
\end{bmatrix}.
\]
This construction preserves the same layered background structure as the isotropic case, while introducing cell-to-cell heterogeneity and directional anisotropy within each layer.

\begin{figure}[H]
\centering

\begin{minipage}{0.32\textwidth}
    \centering
    \includegraphics[width=\linewidth]{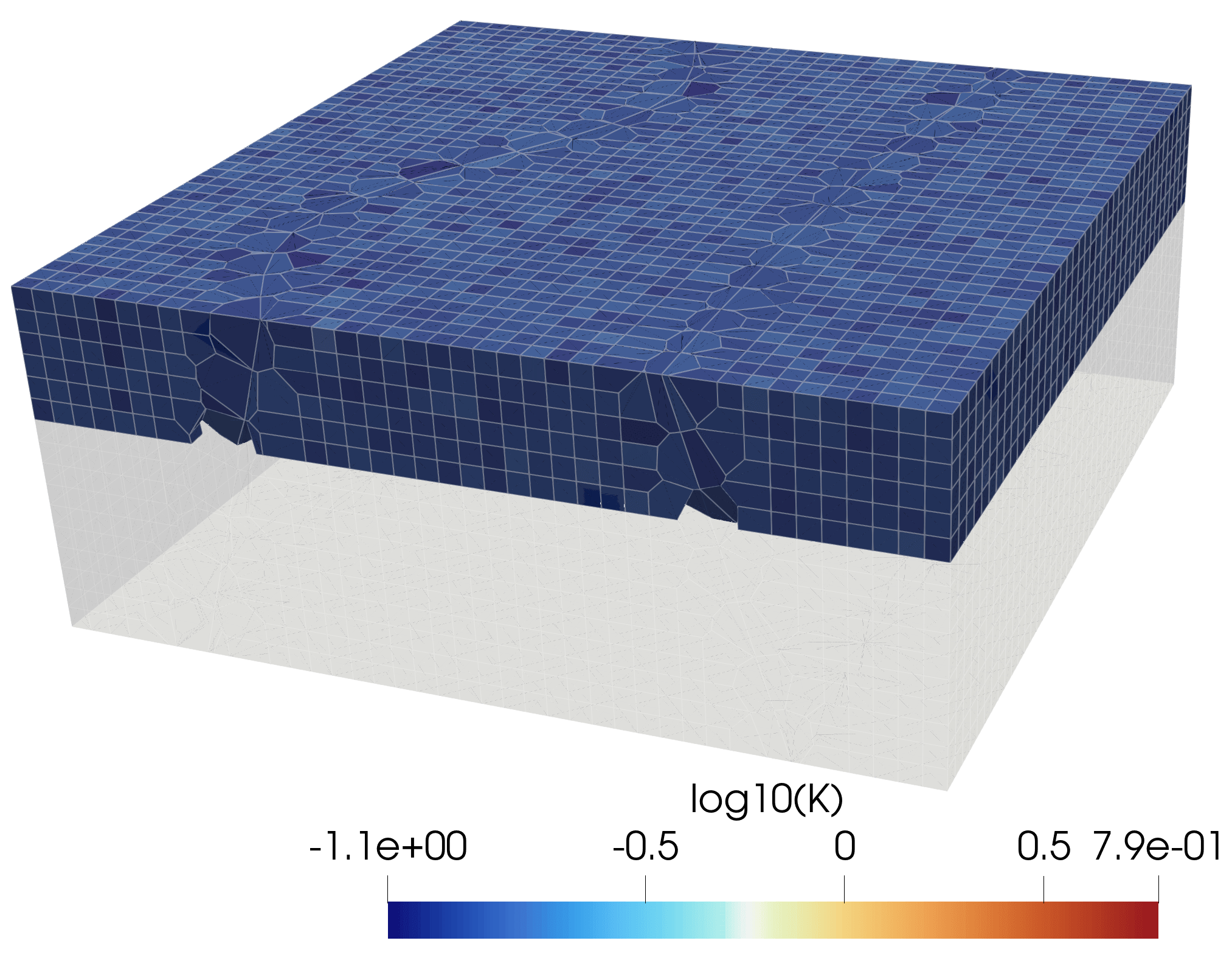}
    \\
    \small (a) Top layer: $k_\text{base} = 4$
\end{minipage}
\hfill
\begin{minipage}{0.32\textwidth}
    \centering
    \includegraphics[width=\linewidth]{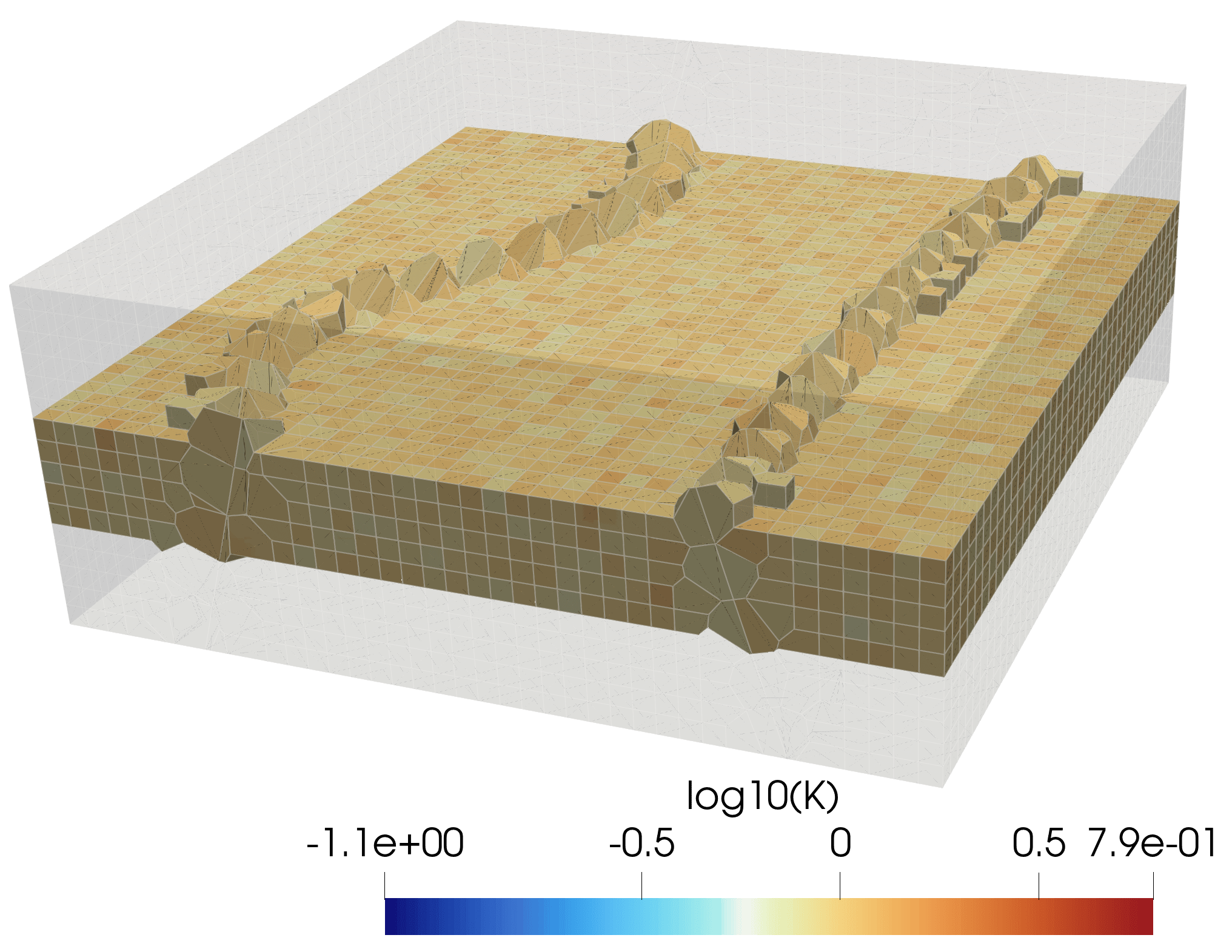}
    \\
    \small (b) Middle layer: $k_\text{base} = 1$
\end{minipage}
\hfill
\begin{minipage}{0.32\textwidth}
    \centering
    \includegraphics[width=\linewidth]{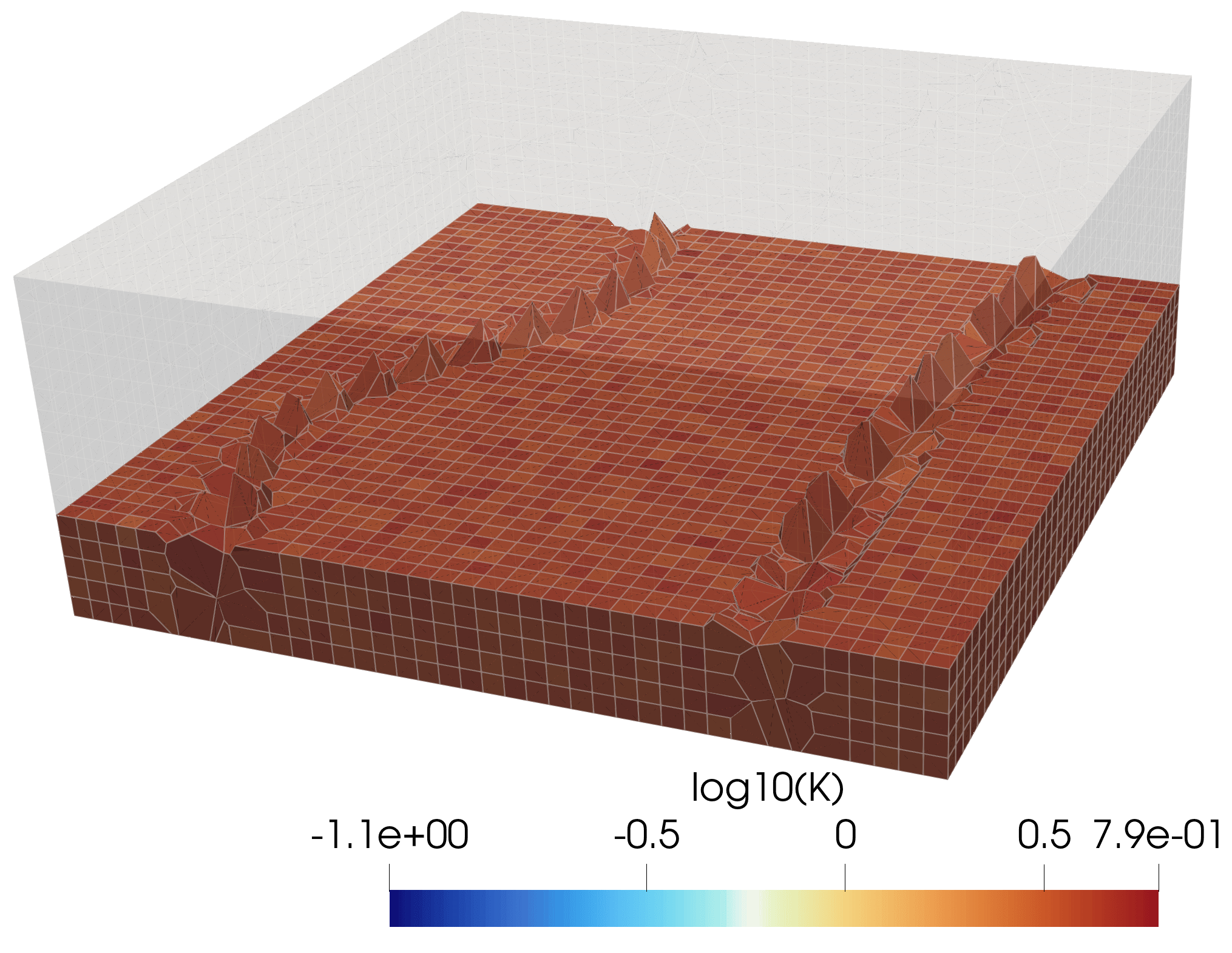}
    \\
    \small (c) Bottom layer: $k_\text{base} = 0.1$
\end{minipage}
\caption{
Three-layer heterogeneous anisotropic permeability field used in the 3D two-fault benchmark. Colors indicate $\log_{10}(k_{xx})$, where $k_{xx}$ denotes the perturbed principal permeability component.
}
\label{fig:layered_perm2}
\end{figure}


We then visualize the evolution of the adaptive cell classification for the layered heterogeneous anisotropic setting described in Figure~\ref{fig:layered_anisotropic_fault_cells}. By visual inspection, the resulting classification pattern is similar to that observed in the layered isotropic case. The MFD cells continue to concentrate around the fault regions where geometric distortions are most pronounced, while the Cartesian bulk cells away from the faults remain classified as TPFA cells.

\begin{figure}[H]
\centering
\begin{minipage}{0.32\textwidth}
    \centering
    \includegraphics[width=\linewidth]{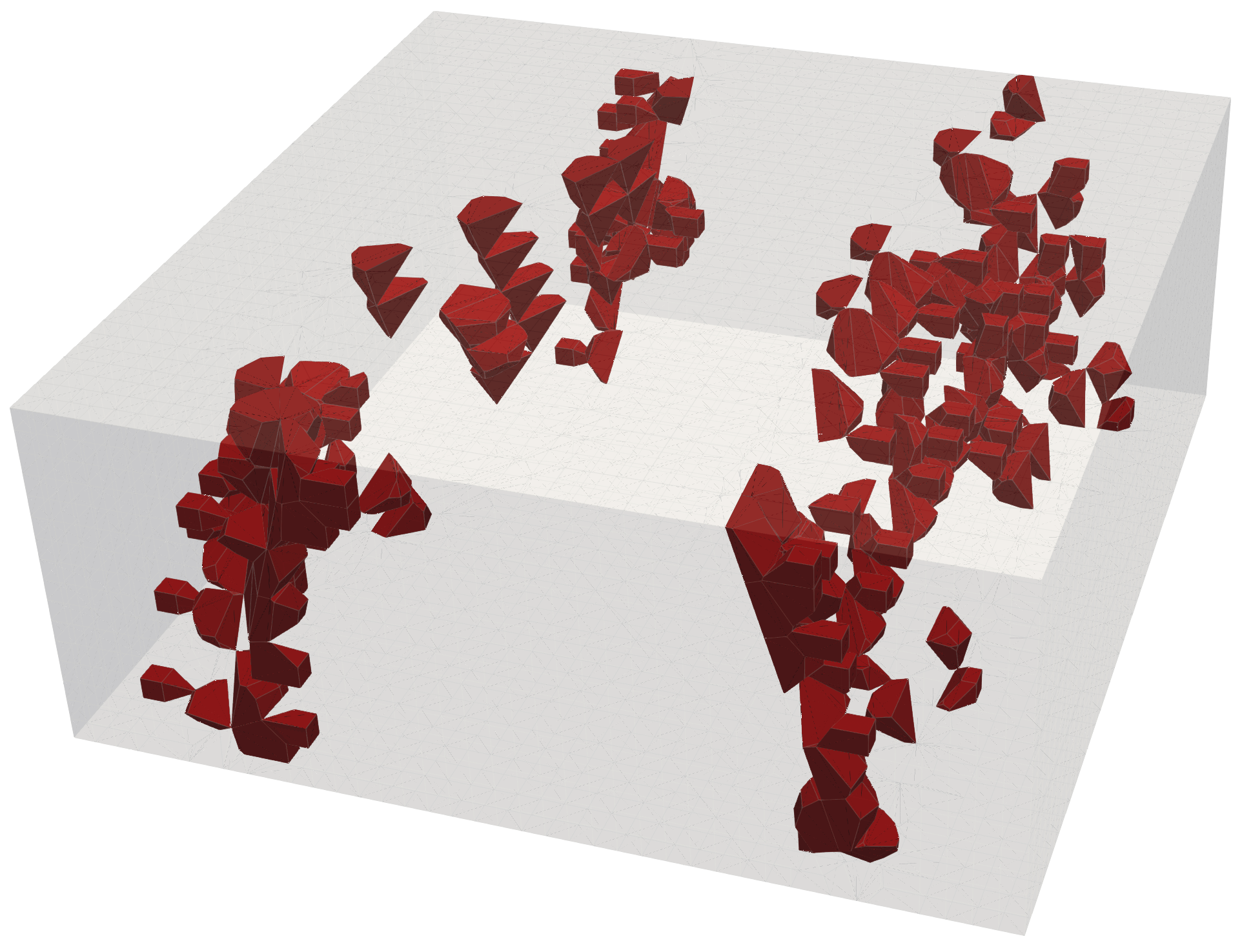}
    \\
    \small (a) $\tau = 10^{0}$
\end{minipage}
\hspace{0.09\textwidth}
\begin{minipage}{0.32\textwidth}
    \centering
    \includegraphics[width=\linewidth]{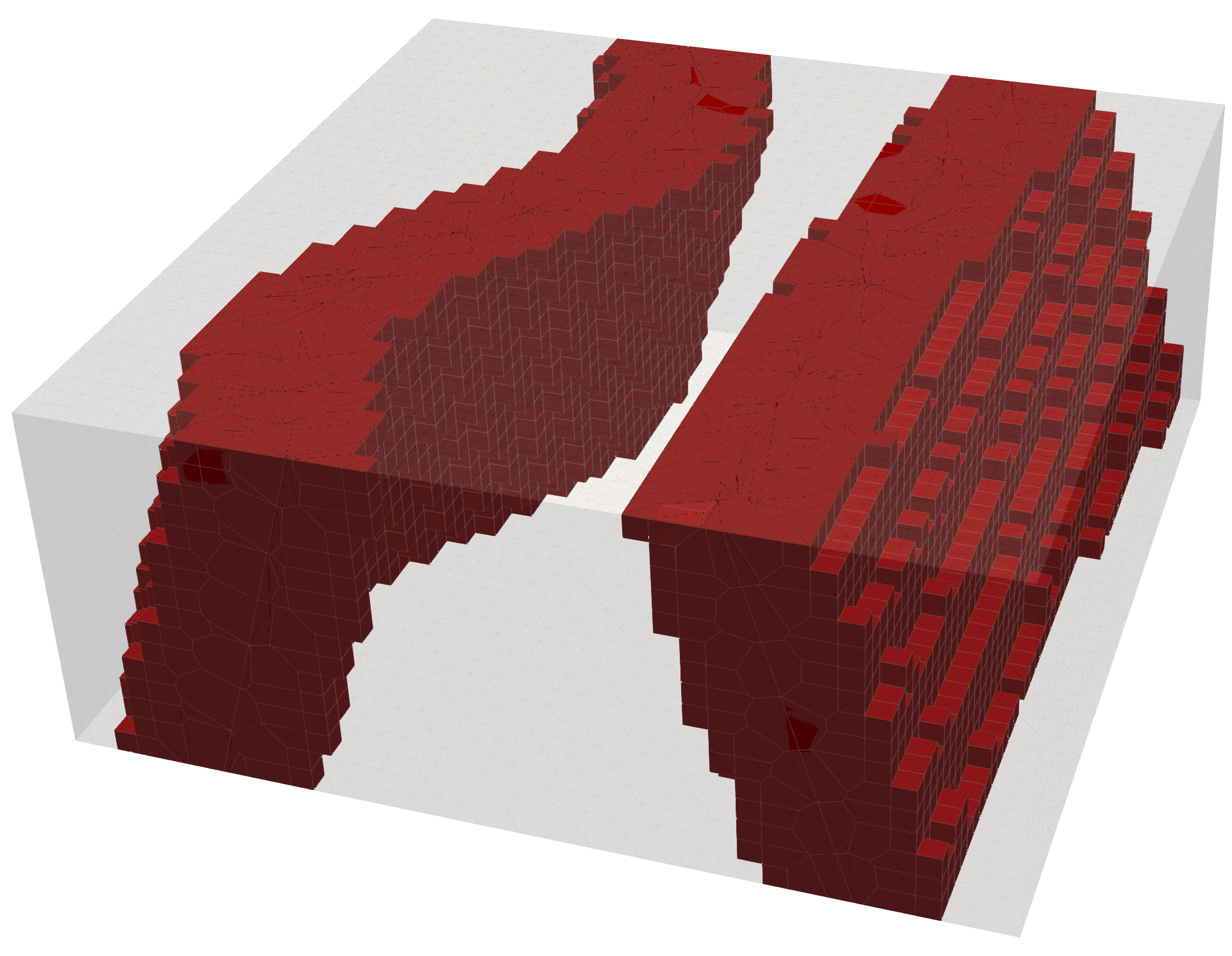}
    \\
    \small (b) $\tau = 10^{-8}$
\end{minipage}
\caption{
Evolution of the adaptive cell classification for the layered heterogeneous anisotropic permeability case. Red cells denote regions where the MFD discretization is activated.
}
\label{fig:layered_anisotropic_fault_cells}
\end{figure}

The quantitative results in Table~\ref{tab:layered_anisotropic_results} reveal differences that are not immediately apparent from the classification plots. Although the overall classification pattern remains similar to that of the isotropic case, the anisotropic configuration consistently activates a slightly larger number of MFD-compatible cells for a given tolerance value. Nevertheless, the TPFA cell count again approaches a stable value as the tolerance is reduced, indicating that the adaptive partition has effectively converged. The relative flux and saturation errors are also slightly larger than those observed in the previous case. Despite these differences, the characteristic transition near $\tau = 10^{-2}$ and the subsequent convergence behavior remain unchanged. Figure~\ref{fig:anisotropic_error_plots} further confirms that the flux errors remain bounded by the prescribed tolerance.

\begin{table}[H]
\centering
\scriptsize
\setlength{\tabcolsep}{4pt}

\begin{tabular}{c|ccccccc}
\hline
\textbf{Metric}
& \textbf{Full TPFA}
& $\mathbf{10^{0}}$
& $\mathbf{10^{-1}}$
& $\mathbf{10^{-2}}$
& $\mathbf{10^{-4}}$
& $\mathbf{10^{-6}}$
& $\mathbf{10^{-8}}$ \\
\hline

TPFA cells
& 22056
& 21859
& 18269
& 16529
& 16059
& 16050
& 16050 \\

Sparsity reduction (\%)
& 92.30
& 88.94
& 52.07
& 47.09
& 45.76
& 45.74
& 45.74 \\

Relative flux error
& $2.59 \times 10^{-2}$
& $2.16 \times 10^{-2}$
& $4.97 \times 10^{-4}$
& $4.31 \times 10^{-13}$
& $1.17 \times 10^{-12}$
& $8.93 \times 10^{-13}$
& $8.93 \times 10^{-13}$ \\

Relative saturation error
& $2.90 \times 10^{-3}$
& $2.64 \times 10^{-3}$
& $2.36 \times 10^{-6}$
& $3.88 \times 10^{-13}$
& $1.05 \times 10^{-12}$
& $8.01 \times 10^{-13}$
& $8.01 \times 10^{-13}$ \\

\hline
\end{tabular}

\caption{
Tolerance-dependent TPFA cell count, sparsity reduction, and relative errors for the layered heterogeneous anisotropic permeability case.
}
\label{tab:layered_anisotropic_results}
\end{table}

These observations provide useful insight into the behavior of the proposed residual indicator. Since the projected flux construction incorporates the local permeability tensor through the harmonic face permeability tensor $\kappa_\fc$, the classification criterion naturally accounts for both heterogeneity and anisotropy. As a result, small differences in TPFA cell counts and error magnitudes are observed relative to the isotropic case, indicating that the additional permeability information is reflected in the classification process. Beyond these differences, the overall behavior suggests that geometric complexity plays a much larger role than permeability variations in determining where MFD discretization is required.

\begin{figure}[H]
\centering

\begin{minipage}{0.49\textwidth}
    \centering
    \includegraphics[width=\linewidth]{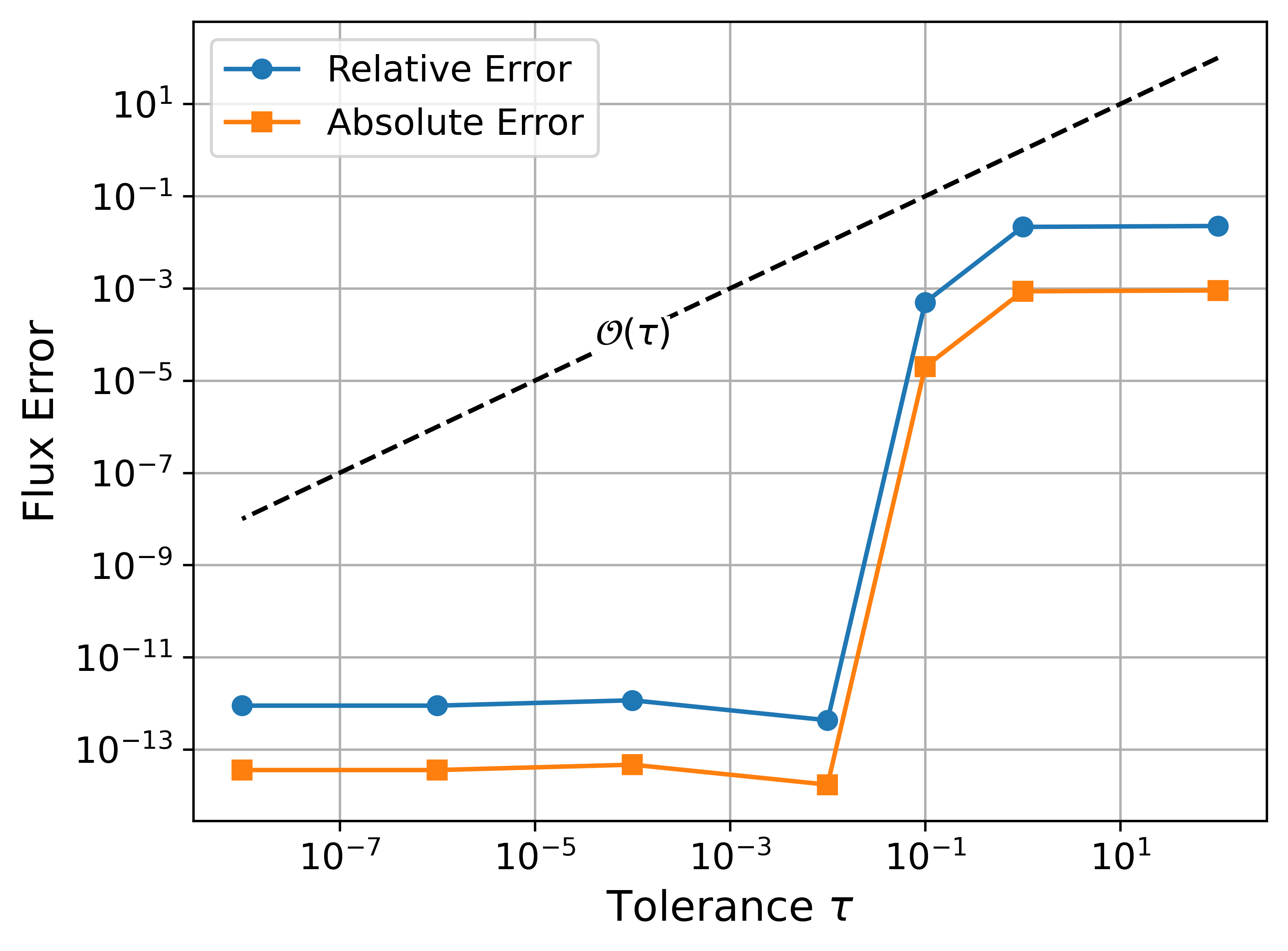}
    \\
    \small (a) Mass flux error
\end{minipage}
\hfill
\begin{minipage}{0.49\textwidth}
    \centering
    \includegraphics[width=\linewidth]{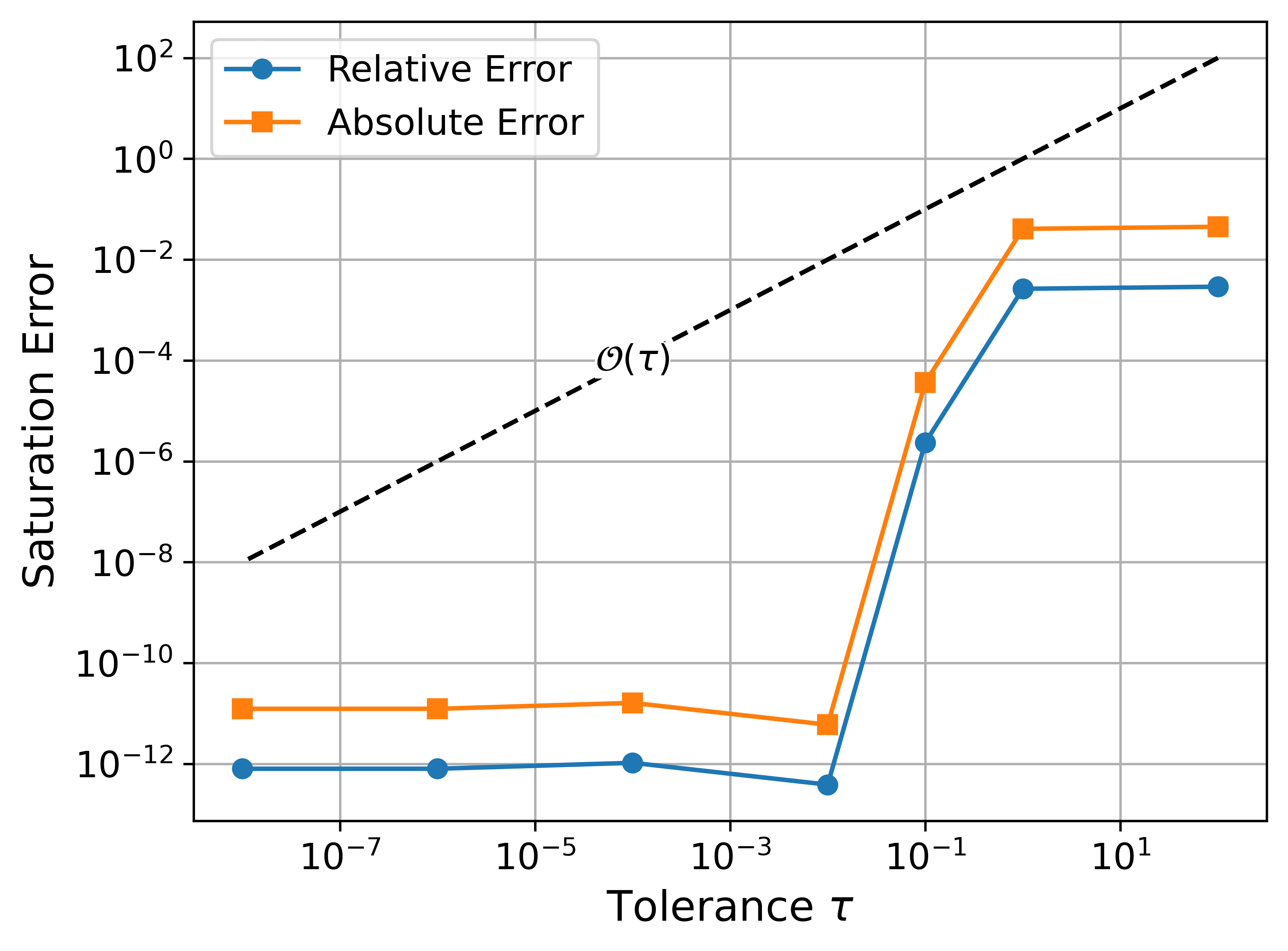}
    \\
    \small (b) Saturation error
\end{minipage}

\caption{
Relative and absolute flux and saturation errors as functions of the tolerance for the layered heterogeneous anisotropic permeability case.
}
\label{fig:anisotropic_error_plots}
\end{figure}

\subsection{Fully Polyhedral Case}
We finally consider a fully polyhedral benchmark case. The computational domain is
\[
\Omega = [0,L_x] \times [0, L_y] \times [0,L_z] =
[0,2] \times [0,1] \times [0,0.5],
\]
and consists of 14071 cells, 90100 faces and 65171 vertices, as shown in Figure~\ref{fig:fully_polyhedral_mesh}. This final case is entirely composed of irregular polyhedral cells. A homogeneous isotropic permeability tensor $\diffK = \mathbf{I}$ is used throughout the domain.

\begin{figure}[H]
\centering
\includegraphics[width=0.76\textwidth]{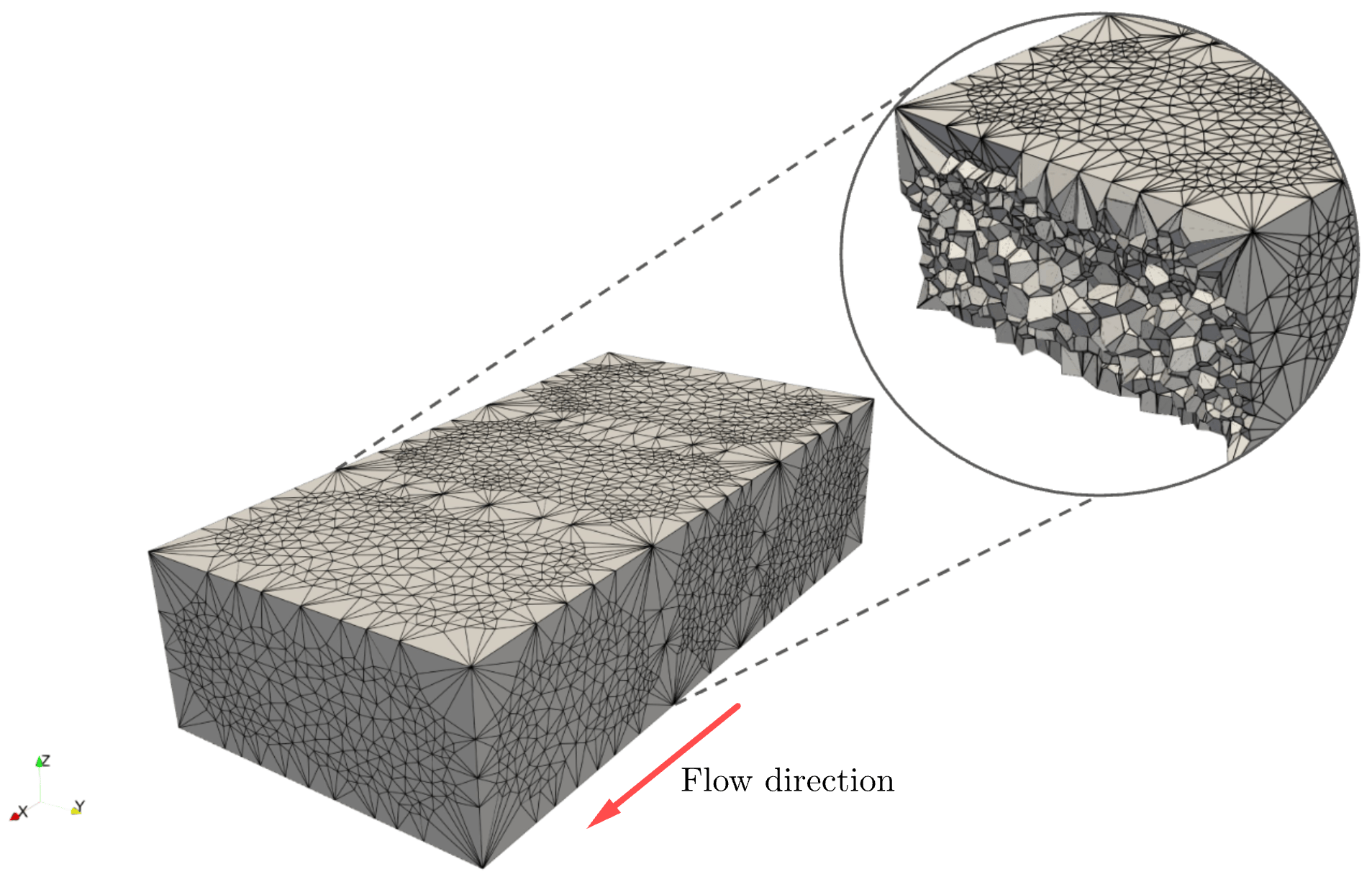}
\caption{
Fully polyhedral benchmark mesh generated by Vorocrust~\protect\cite{vorocrust}. The magnified view highlights the interior polyhedral cell structure, showing that the domain is composed of complex polyhedral elements throughout the volume.
}
\label{fig:fully_polyhedral_mesh}
\end{figure}

To generate a left-to-right flow configuration, pressure boundary conditions are prescribed on the left and right boundaries of the domain with homogeneous no-flow conditions imposed elsewhere. We use the linear pressure field
\[
p_\lin(x,y,z) = 1 - \frac{x}{L_x}
\]
for cell classification which induces a uniform flow in the $x$-direction. The corresponding projected flux is generated in the same manner as in the previous benchmarks. The computed numerical fluxes are subsequently used to evolve the saturation field using the same fully implicit finite-volume upwind discretization employed in the previous cases. This configuration produces a shock-like saturation front propagating across the fully polyhedral mesh. Relative flux errors are evaluated with respect to the projected flux field and saturation errors are computed relative to the full MFD solution.

\begin{figure}[H]
\centering

\begin{minipage}{0.32\textwidth}
    \centering
    \includegraphics[width=\linewidth]{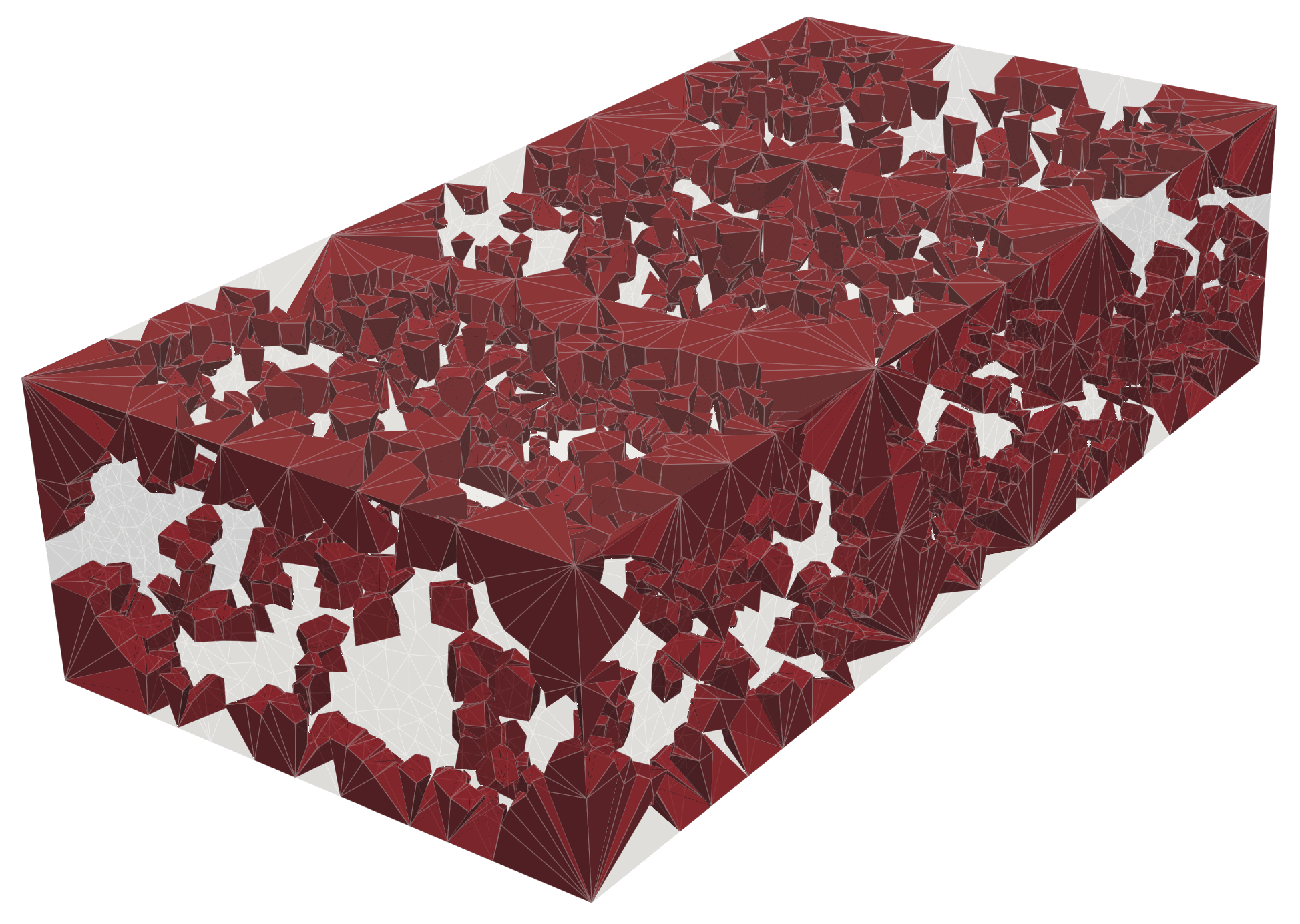}
    \\
    \small (a) $\tau = 10^{0}$
\end{minipage}
\hfill
\begin{minipage}{0.32\textwidth}
    \centering
    \includegraphics[width=\linewidth]{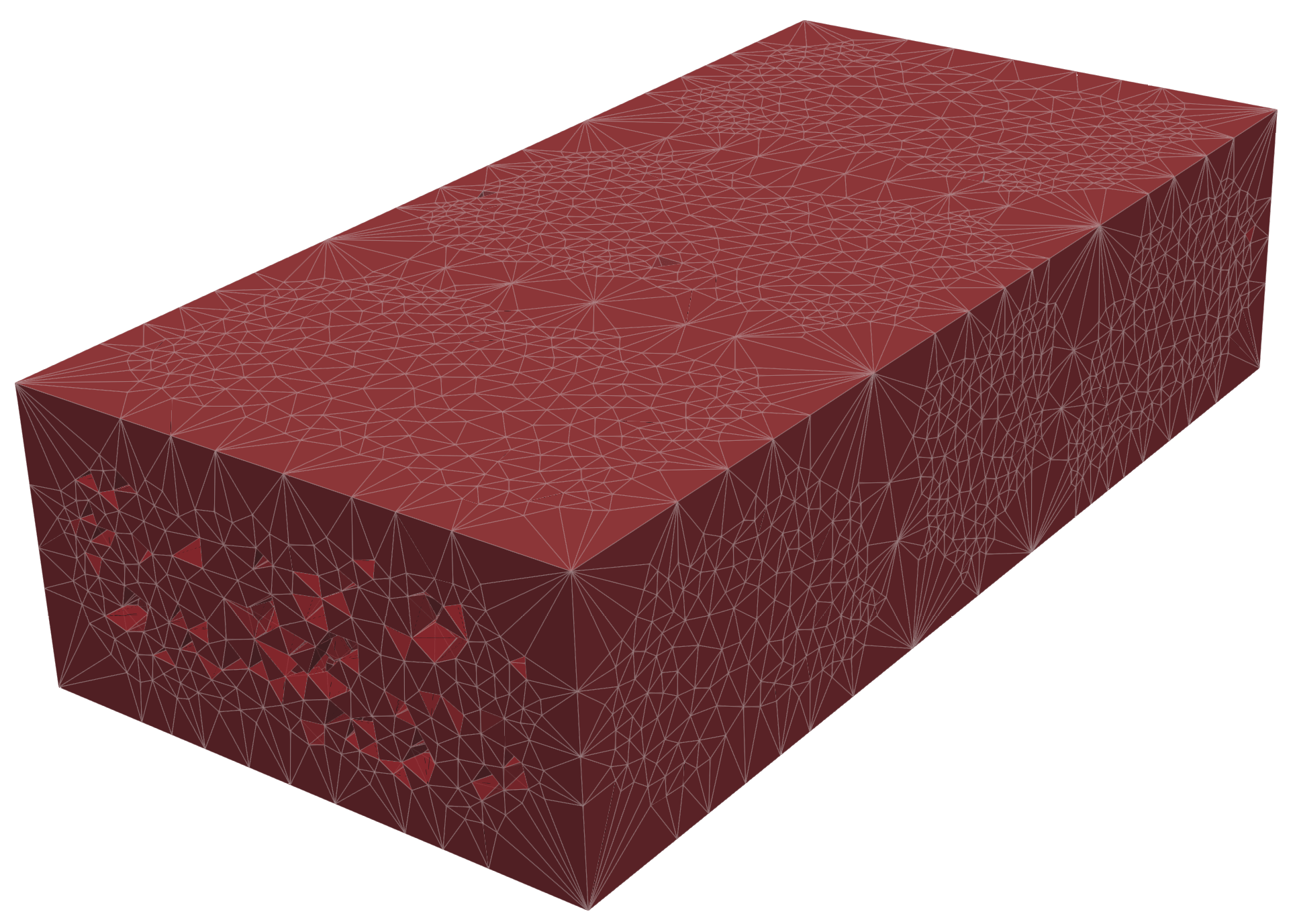}
    \\
    \small (b) $\tau = 10^{-1}$
\end{minipage}
\hfill
\begin{minipage}{0.32\textwidth}
    \centering
    \includegraphics[width=\linewidth]{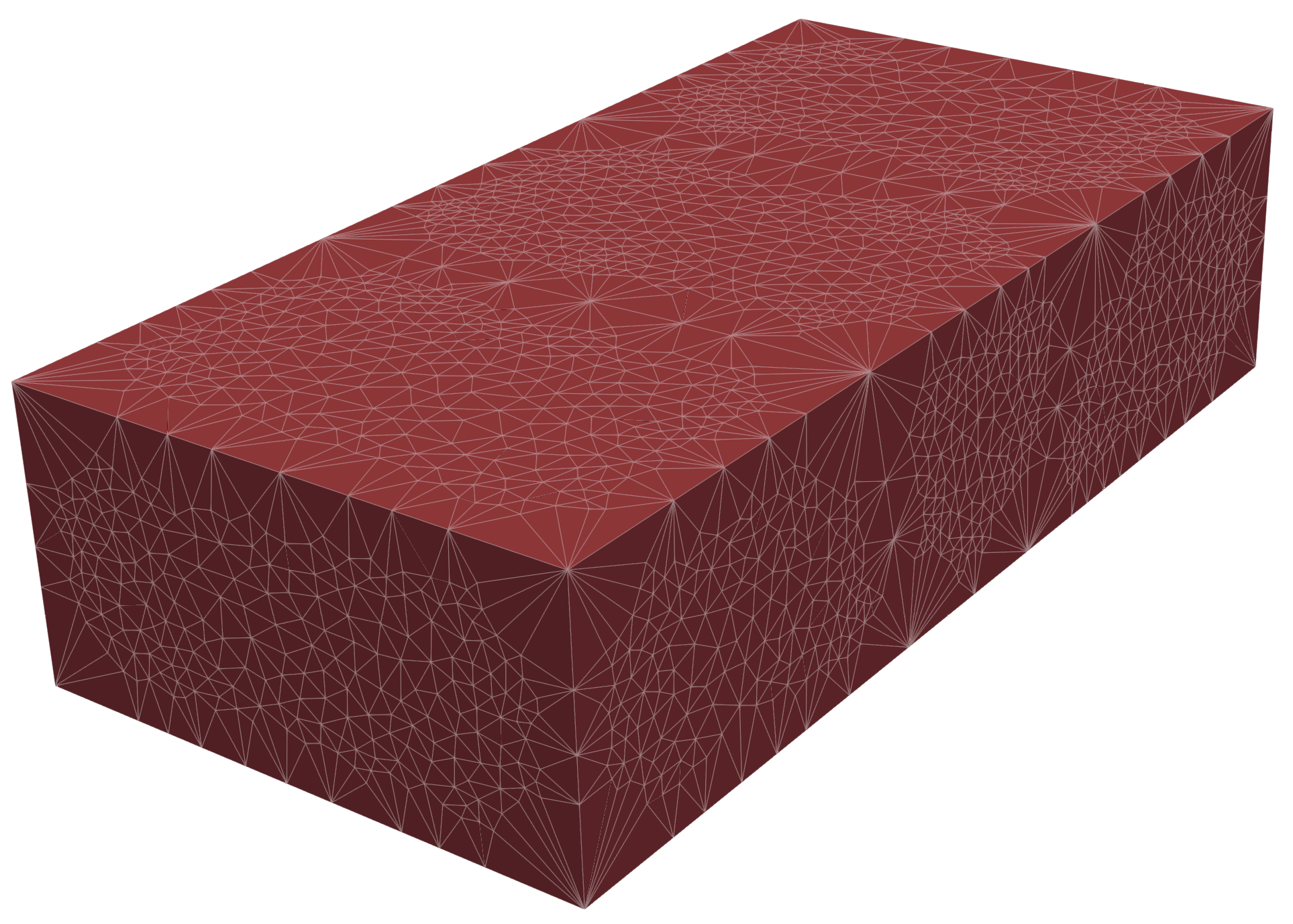}
    \\
    \small (c) $\tau = 10^{-2}$
\end{minipage}

\caption{
Evolution of the adaptive cell classification for the fully polyhedral case. 
Red cells also denote MFD cells selected by the residual-based indicator. 
}
\label{fig:polyhedral_cell_evolution}
\end{figure}

Given the highly irregular polyhedral cell geometry, significant violations of TPFA consistency are expected throughout the domain. The classification patterns shown in Figure~\ref{fig:polyhedral_cell_evolution} are fully consistent with this expectation with the residual-based indicator rapidly activating MFD discretization across a large portion of the mesh. As quantified in Table~\ref{tab:vorocrust_results}, only a small fraction of cells remain classified as TPFA-compatible at $\tau = 10^{-1}$. Further tightening of the tolerance leads to a complete transition to the full MFD discretization with all cells classified as MFD-compatible by $\tau = 10^{-2}$.

\begin{table}[H]
\centering
\scriptsize
\setlength{\tabcolsep}{4pt}

\begin{tabular}{c|cccccc}
\hline
\textbf{Metric}
& \textbf{Full TPFA}
& $\mathbf{10^{0}}$
& $\mathbf{10^{-1}}$
& $\mathbf{10^{-2}}$
& $\mathbf{10^{-3}}$
& $\mathbf{10^{-4}}$ \\
\hline

TPFA cells
& 14071
& 12246
& 1091
& 0
& 0
& 0 \\

Sparsity reduction (\%)
& 96.12
& 85.31
& 8.44
& 0.00
& 0.00
& 0.00 \\

Relative flux error
& $2.20 \times 10^{-1}$
& $1.33 \times 10^{-1}$
& $3.33 \times 10^{-2}$
& $3.23 \times 10^{-11}$
& $3.23 \times 10^{-11}$
& $3.23 \times 10^{-11}$ \\

Relative saturation error
& $1.33 \times 10^{-1}$
& $6.51 \times 10^{-2}$
& $1.63 \times 10^{-3}$
& $0.00$
& $0.00$
& $0.00$ \\

\hline
\end{tabular}

\caption{
Tolerance-dependent TPFA cell count, sparsity reduction, and relative errors for the fully polyhedral case.
}
\label{tab:vorocrust_results}
\end{table}

Beyond confirming the anticipated behavior of the adaptive scheme, this transition provides a quantitative measure of the TPFA consistency level of the fully polyhedral mesh. The fact that a tolerance of approximately $\tau = 10^{-2}$ is sufficient to trigger that TPFA consistency violations are not confined to isolated regions but are instead pervasive throughout the domain. In this sense, the proposed indicator not only identifies where MFD discretization is required but also reveals the characteristic scale of the underlying TPFA inconsistency.

The error behavior for the fully polyhedral benchmark remains broadly consistent with that observed in the previous test cases. Despite the severe TPFA consistency violations present in the mesh, the relative flux errors continue to decrease as the tolerance is tightened and remain bounded by the prescribed tolerance until the adaptive partition converges to the full MFD discretization. A slightly different behavior is observed in the saturation error plots. Although the relative saturation errors closely track the flux errors, the absolute saturation errors occasionally exceed the prescribed tolerance. This is expected, since the theoretical bound applies to the flow problem and not to the diffusive transport discretization. Accordingly, the saturation errors are not required to satisfy the same tolerance-dependent bound as the flux errors.

\begin{figure}[H]
\centering

\begin{minipage}{0.49\textwidth}
    \centering
    \includegraphics[width=\linewidth]{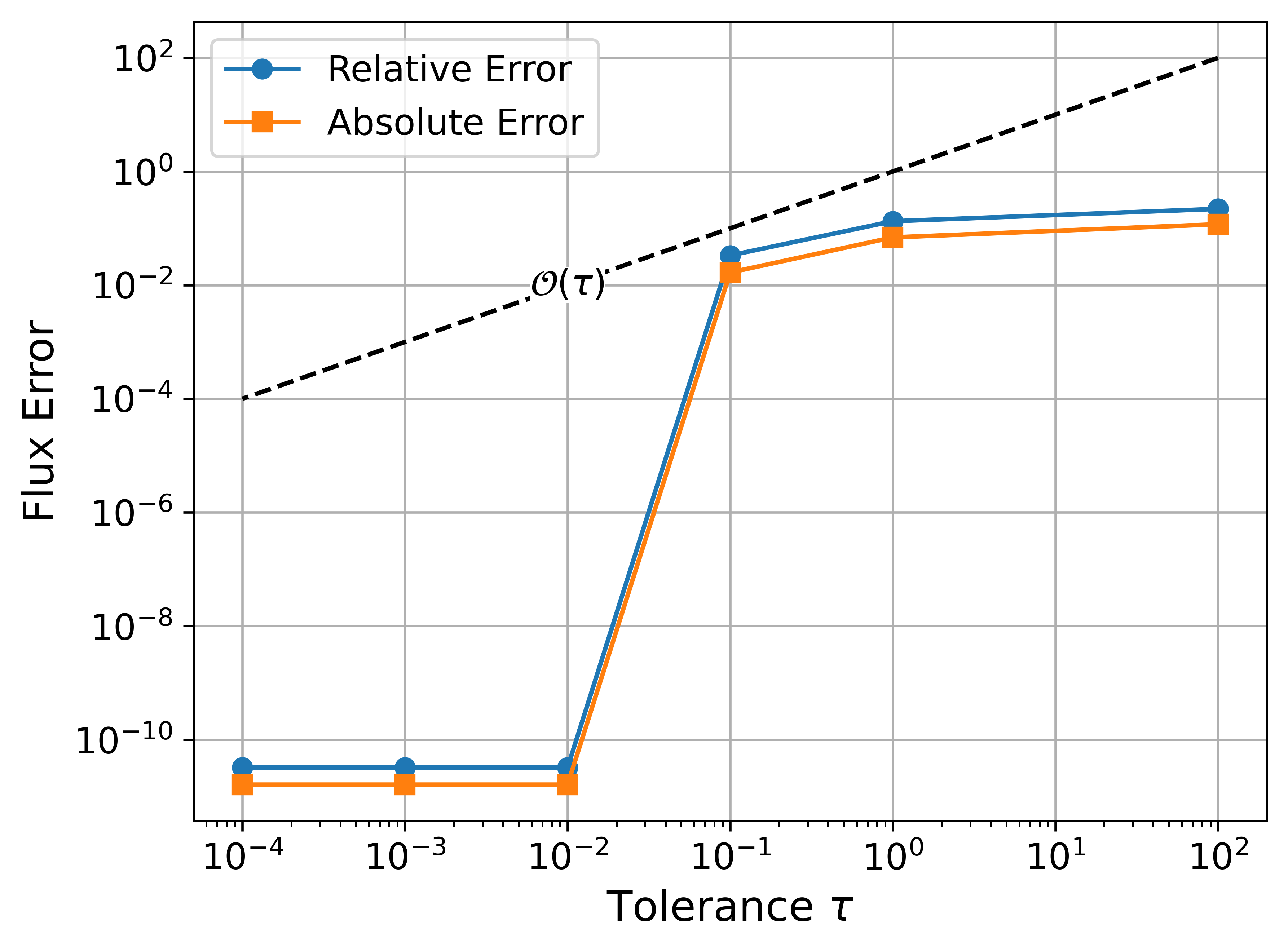}
    \\
    \small (a) Mass flux error
\end{minipage}
\hfill
\begin{minipage}{0.49\textwidth}
    \centering
    \includegraphics[width=\linewidth]{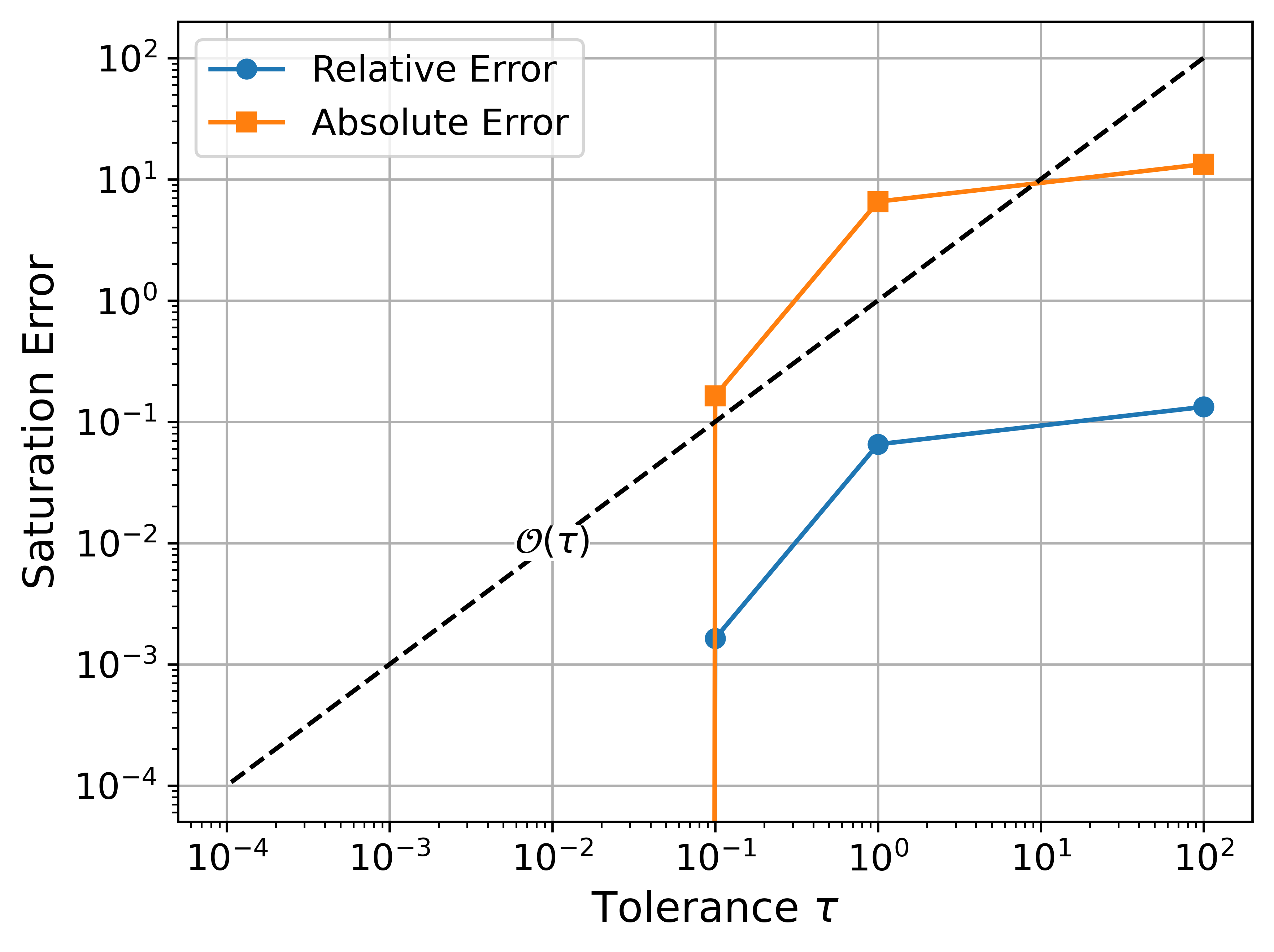}
    \\
    \small (b) Saturation error
\end{minipage}

\caption{
Relative and absolute flux and saturation errors versus tolerance for the fully polyhedral case. For $\tau \le 10^{-2}$, the adaptive scheme coincides with the full MFD discretization, resulting in zero saturation error relative to the full MFD reference solution.
}
\label{fig:polyhedral_error_plots}
\end{figure}

To further examine the practical impact of the adaptive discretization, we additionally visualize the resulting saturation fronts for different tolerance values as shown in Figure~\ref{fig:polyhedral_saturation_fronts}. These results provide a direct illustration of how improvements in flux accuracy translate into improved transport predictions and demonstrate the influence of the adaptive flow discretization on the quality of the computed saturation solution.

\begin{figure}[H]
\centering

\begin{minipage}{0.49\textwidth}
    \centering
    \includegraphics[width=\linewidth]{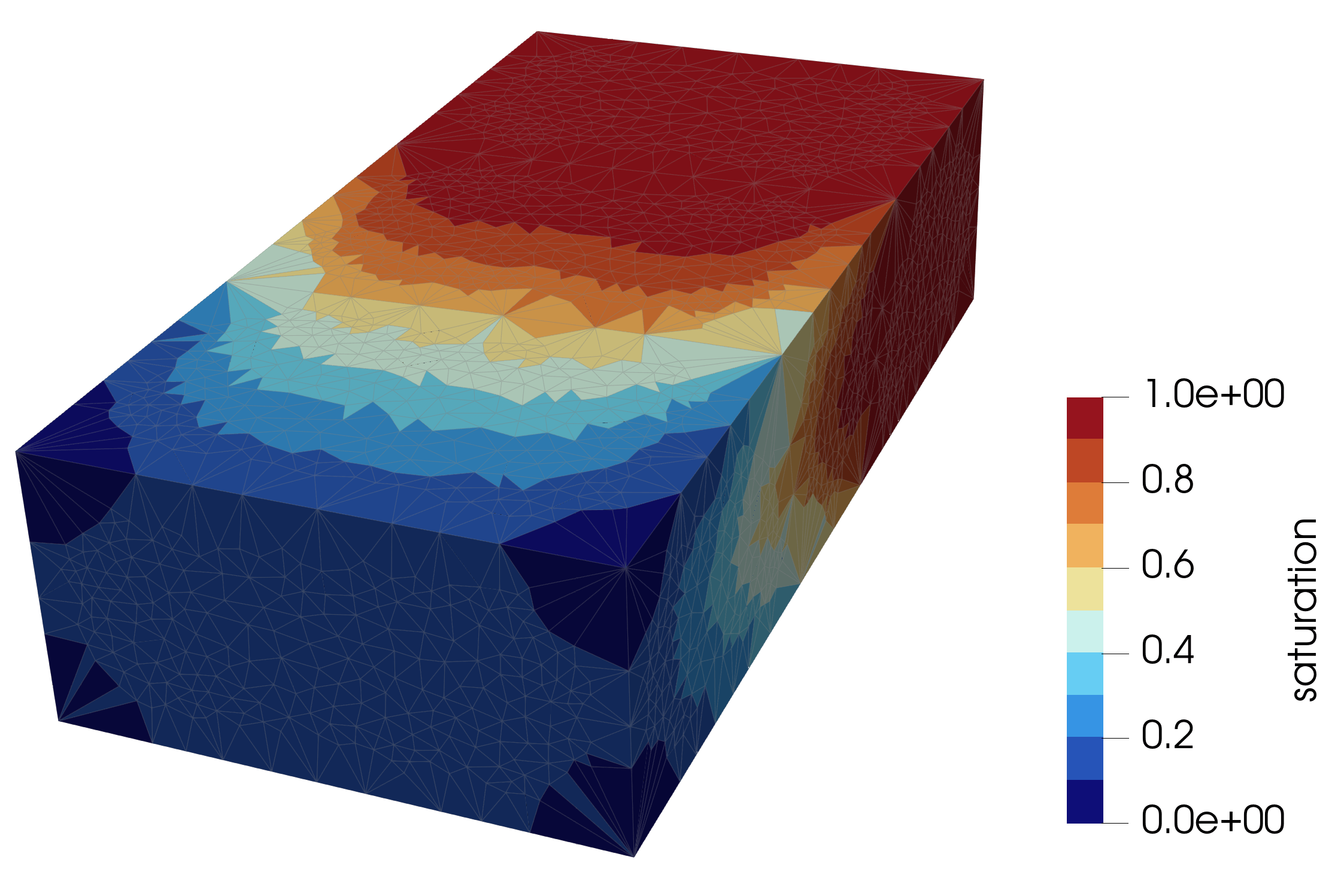}
    \\
    \small (a) Full TPFA
\end{minipage}
\hfill
\begin{minipage}{0.49\textwidth}
    \centering
    \includegraphics[width=\linewidth]{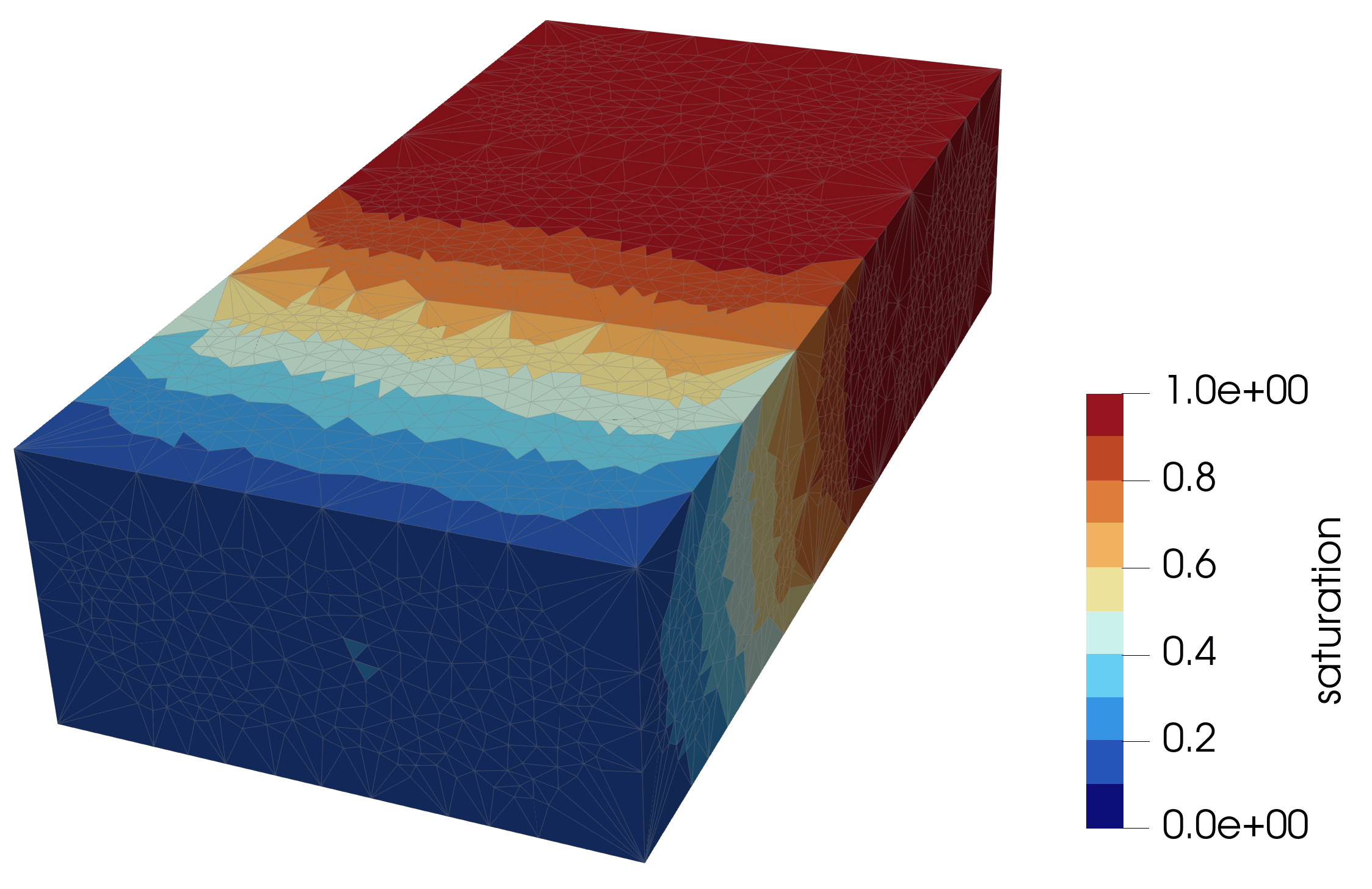}
    \\
    \small (b) $\tau = 10^{0}$
\end{minipage}

\vspace{0.5em}

\begin{minipage}{0.49\textwidth}
    \centering
    \includegraphics[width=\linewidth]{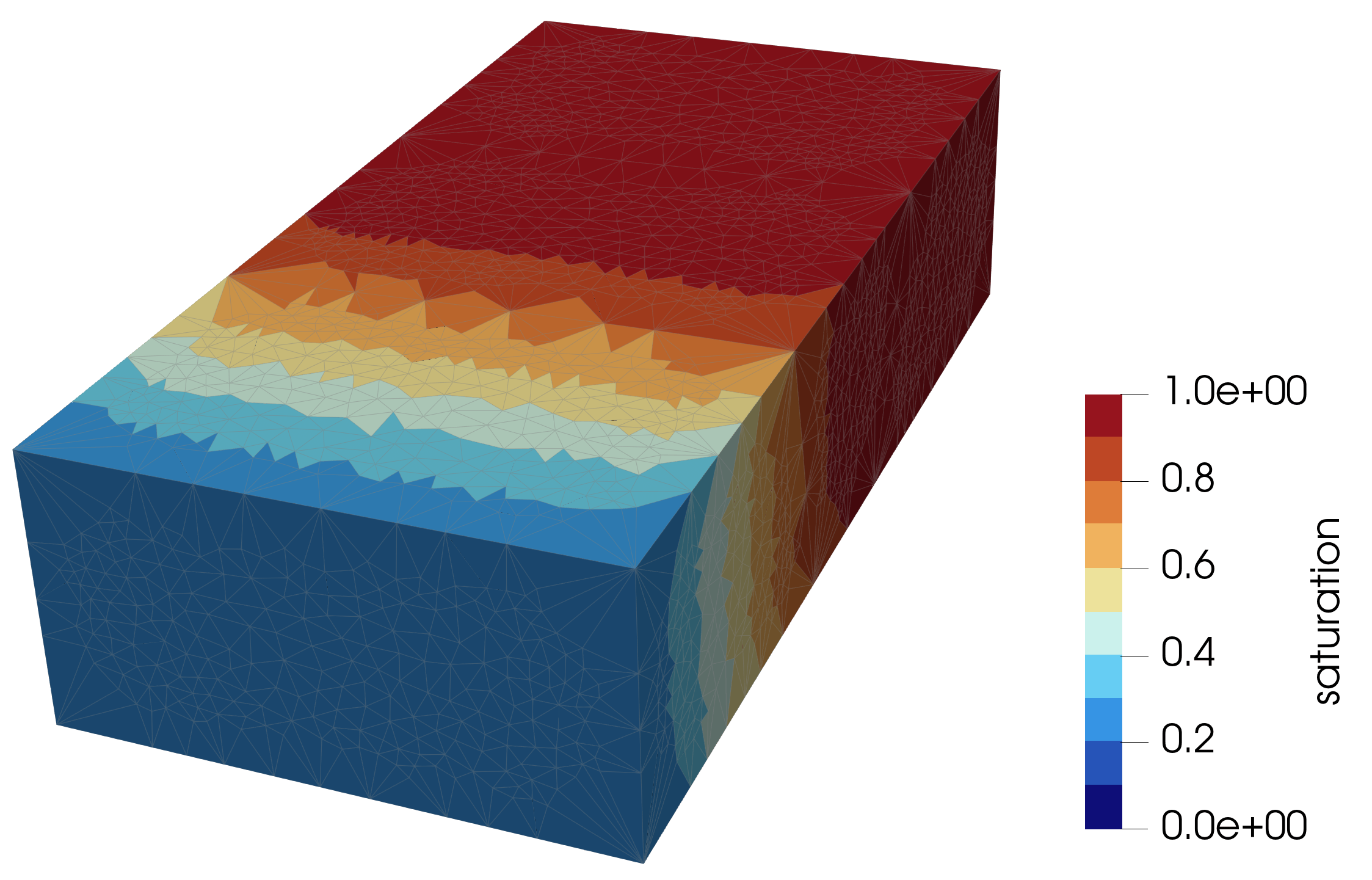}
    \\
    \small (c) $\tau = 10^{-1}$
\end{minipage}
\hfill
\begin{minipage}{0.49\textwidth}
    \centering
    \includegraphics[width=\linewidth]{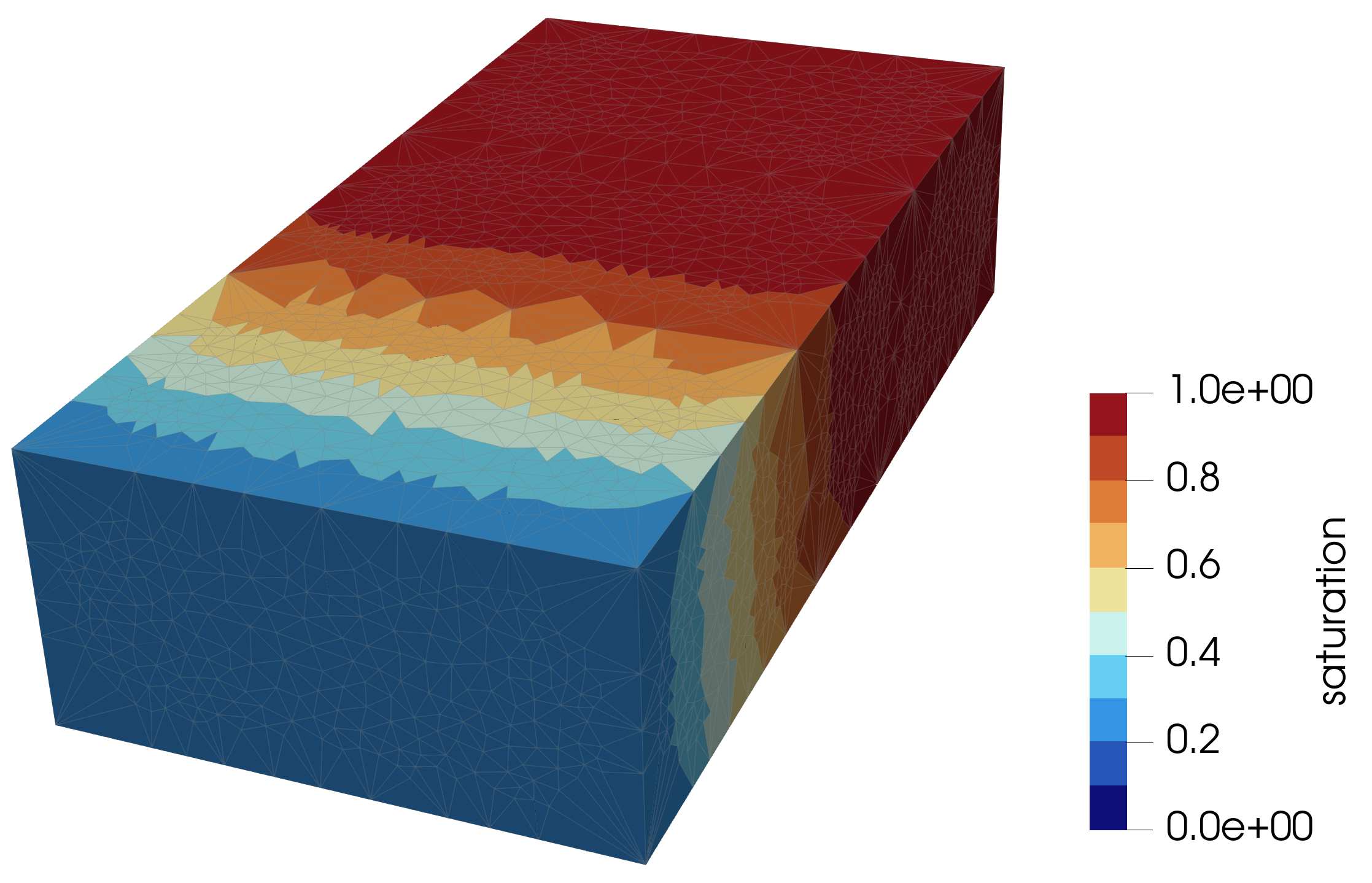}
    \\
    \small (d) $\tau = 10^{-2}$ (full MFD)
\end{minipage}

\caption{
Comparison of saturation fronts at $t = 1.00s$ for different adaptive tolerances in the fully polyhedral case. 
}
\label{fig:polyhedral_saturation_fronts}
\end{figure}

As expected, the full TPFA discretization produces the most distorted saturation front on the fully polyhedral mesh. The distortion remains clearly visible for $\tau = 10^{0}$, where the majority of cells are still classified as TPFA-compatible. By $\tau = 10^{-1}$, however, the adaptive solution is already visually very close to the full MFD reference with only minor discrepancies remaining in a few localized regions. This observation is particularly noteworthy because the adaptive partition has not yet converged to the full MFD discretization. The results therefore indicate that the cells most strongly affecting the transport solution are identified and switched to MFD mode before the remaining TPFA-compatible cells are eliminated.

This observation is particularly relevant for practical reservoir simulations, where regions with substantially different levels of TPFA consistency often coexist within the same computational domain. Under such conditions, the proposed adaptive framework is able to preserve TPFA-compatible regions while selectively activating the full MFD discretization only where it is required. The results presented across all benchmark problems indicate that this localized adaptation is sufficient to recover transport solutions that remain very close to those obtained with the full MFD discretization without incurring the cost of applying the full MFD stencil throughout the entire domain.


\subsection*{Reproducible Data and Figures}
All numerical results and figures presented in this work can be reproduced using the publicly available software repository and Docker environment provided in~\cite{docker}. The repository contains the complete source code, input files and post-processing scripts required to reproduce the convergence plots, error plots, cell classification and saturation field visualization and tabulated results reported in this paper. 

More specifically, the Docker environment reproduces the Local and Global Adaptation convergence studies presented in Section~4.1, including the associated convergence plots. For the benchmark problems reported in Section~4.2-4.4, the container generates the flux and saturation error plots together with the numerical quantities used throughout the paper, including relative flux and saturation errors, TPFA cell counts, sparsity reductions. In addition, adaptive cell classification and saturation field visualization are provided as VTK output files which can be viewed directly in ParaView to reproduce the corresponding 3D figures reported in the paper.

The Docker image provides a fully configured computational environment to facilitate reproducibility across different computing platforms. The repository and accompanying software artifacts are provided in accordance with FAIR principles for research software~\cite{Barker2022}.

\pagebreak

%% file: conclusion.tex
\section{Conclusion}
In this study, we developed an adaptive mimetic finite difference (MFD) framework anchored by a residual-based consistency indicator which serves to identify regions within a computational domain where the two-point flux approximation (TPFA) fails to maintain consistency due to geometric distortions. This work contributes significantly to the field of reservoir simulation through several key findings.

First, the proposed consistency indicator is rooted in the physics of the governing equations allowing for automated identification of distortion-sensitive regions without reliance on heuristic geometric metrics or manual tuning of discretization strategies. This methodology streamlines the process of determining appropriate discretization techniques across diverse reservoir configurations.

Second, rigorous numerical experiments validated the adaptive framework's ability to preserve solution accuracy on par with full MFD discretizations even in the presence of complex geometries. The adaptation mechanism ensures that mass conservation and consistency are satisfied, thereby delivering reliable flux computations essential for accurate reservoir modeling.

Moreover, the implementation of the adaptive framework enhances the global sparsity of the inner product matrix associated with the MFD method. By selectively activating MFD discretizations in regions that require higher accuracy while retaining TPFA in well-aligned areas, the computational costs are substantially reduced. This dynamic adaptability not only reduces the required computational resources but also results in numerical errors that can occasionally be smaller than those observed in full MFD solutions.

The introduction of a user-defined tolerance parameter, $\tau$, plays a crucial role in this adaptive approach. Both theoretical analysis and numerical results demonstrate that relative flux errors are explicitly controlled by this parameter allowing for a clear trade-off between solution fidelity and computational efficiency. As $\tau$ is adjusted, the framework provides a straightforward mechanism for adapting the discretization strategy based on user-defined accuracy requirements. In practical reservoir simulation workflows, the proposed indicator also provides a natural criterion for selecting $\tau$. Since the indicator is formulated as a normalized residual quantity, $\tau$ can be chosen at a level comparable to the residual tolerance of the underlying nonlinear Newton solver.

The implications of this work extend beyond these immediate findings. Future research will focus on developing robust preconditioning techniques tailored for the adaptive TPFA-MFD systems to enhance convergence rates in large-scale reservoir simulations. Additionally, further studies could explore the relationship between mesh characteristics, flow physics, and optimal choice of MFD inner products to refine the adaptability of the framework. Lastly, extending the proposed adaptive framework to more complex nonlinear reservoir simulation settings, including compositional flow models and coupled energy transport equations, remains as important direction for future investigation.

\subsection*{Acknowledgements}
This work was funded by TotalEnergies and Chevron through the FC-MAELSTROM Project. Portions of this work were performed under the auspices of the U.S. Department of Energy by Lawrence Livermore National Laboratory under Contract DE-AC52-07NA27344. LLNL-JRNL-2021274-DRAFT.